\numberwithin{equation}{section}
\newcommand{\HM}{\mathcal{M}}
\newcommand{\HE}{\mathcal{E}}
\newcommand{\HN}{\mathcal{N}}
\newcommand{\D}{\mathbb{D}}
\newcommand{\C}{\mathbb{C}}
\newcommand{\T}{\mathbb{T}}
\newcommand{\ran}{\mathrm{ran \ }}
\newcommand{\ind}{\mathrm{ind}}
\newcommand{\rank}{\mathrm{rank \ }}
\theoremstyle{plain}
\newtheorem{theorem}{Theorem}[section]
\newtheorem{lemma}[theorem]{Lemma}
\newtheorem{prop}[theorem]{Proposition}
\newtheorem{corollary}[theorem]{Corollary}
\theoremstyle{definition}
\begin{document}
\title{Hilbert-Schmidtness of some finitely generated submodules in $H^2(\D^2)$}

\author[S. Luo]{Shuaibing Luo$^1$}
\address{S. Luo: College of Mathematics and Econometrics, Hunan University, Changsha, Hunan, 410082, PR China}
\email{shuailuo2@126.com}

\author[K. J. Izuchi]{Kei Ji Izuchi$^2$}
\address{K. J. Izuchi: Department of Mathematics,
Niigata University, Niigata 950-2181, Japan}
\email{izuchi@m.sc.niigata-u.ac.jp}

\author[R. Yang]{Rongwei Yang$^3$}
\address{R. Yang: School of Mathematical Science, Tianjin Normal University, Tianjin, PR China \& Department of Mathematics and Statistics, SUNY at Albany, Albany, NY 12222, USA}
\email{ryang@albany.edu}

\subjclass[2010]{47A15, 47A13, 47B35, 46E20}
\noindent \thanks{$^1$ the first author, supported by the NSF of China (11701167, 11771132). $^2$ supported by JSPS KAKENHI Grant (15K04895). $^3$ corresponding author, supported by Tianjin Thousand Talents Plan (ZX0471601033).}

\maketitle
\begin{center}
To the memory of Takahiko Nakazi
\end{center}

\begin{abstract}
A closed subspace $\HM$ of the Hardy space $H^2(\D^2)$ over the bidisk is called a submodule if it is invariant under multiplication by coordinate functions $z_1$ and $z_2$. Whether every finitely generated submodule is Hilbert-Schmidt is an unsolved problem. This paper proves that every finitely generated submodule $\HM$ containing $z_1 - \varphi(z_2)$ is Hilbert-Schmidt, where $\varphi$ is any finite Blaschke product.
Some other related topics such as fringe operator and Fredholm index are also discussed.
\medskip

\noindent\textbf{Keywords:} Hardy space over the bidisk; submodule; core operator; Hilbert-Schmidt submodule; fringe operator; Fredholm index.
\end{abstract}

%To the memory of Takahiko Nakazi.

\section{Introduction}
Let $H^2(\D^2)$ be the Hardy space over the bidisk $\D^2$. If we denote the variables by $z_1$ and $z_2$, then $H^2(\D^2)$ can be identified with $H^2(z_1)\otimes H^2(z_2)$, where $H^2(z)$ is the Hardy space over the unit disk $\D$ with variable denoted by $z$. Let $M_{z_1}$ and $M_{z_2}$ be the multiplication operators with symbols $z_1$ and $z_2$, respectively. A closed subspace $\HM$ of $H^2(\D^2)$ is called a submodule if $\HM$ is invariant under $M_{z_1}$ and $M_{z_2}$. It is easy to see that a submodule is indeed a module over the polynomial ring $C[z_1,z_2]$ with module action defined by multiplication of functions. We denote the lattice of submodules by $Lat(H^2(\D^2))$. Beurling's theorem fully characterizes the submodule of the classical Hardy space $H^2(\D)$. It says that any submodule of $H^2(\D)$ is of the form $\theta H^2(\D)$ for some inner function $\theta$. If we denote by $R_z$ and $S_z$ the restriction of $M_z$ on $\HM$ and respectively the compression of $M_z$ on $\HM^\perp = H^2(\D) \ominus \HM$, then it is not hard to check that $R_z$ and $S_z$ are Fredholm operators, and their indices are $-1$ and $0$, respectively. However, submodules of $H^2(\D^2)$ are complicated (\cite{Ru69}) and they bear no similar characterization. The research on $H^2(\D^2)$ is ongoing. One approach to this problem is to study some relatively simple submodules, and hope that the study will generate concepts and techniques for the general picture. In analogy with the operators $R_z$ and $S_z$ on $H^2(\D)$, we are interested in the operator pairs $(R_{z_1},R_{z_2})$ and $(S_{z_1},S_{z_2})$ on $H^2(\D^2)$. It is clear that $(R_{z_1},R_{z_2})$ is a pair of commuting isometries, and $(S_{z_1},S_{z_2})$ is a pair of commuting contractions. These pairs contain much information about $\HM$ and they are the subjects of many recent studies.

Suppose $\HM$ is a submodule of $H^2(\D^2)$, i.e. $\HM \in Lat(H^2(\D^2))$. Let
$$C = I - R_{z_1}R_{z_1}^* - R_{z_2}R_{z_2}^* + R_{z_1}R_{z_2}R_{z_1}^*R_{z_2}^*.$$
$C$ is called the core operator or defect operator for $\HM$ (\cite{GY04}). $\HM$ is called a Hilbert-Schmidt submodule if the core operator $C$ is Hilbert-Schmidt. Hilbert-Schmidt submodules have many good properties and have been studied extensively in the literature, see e.g. \cite{III17, Ya99, Ya01, Ya04, Ya05} and the references therein. In particular, it was shown in \cite{Ya05} that $C^2$ is unitarily equivalent to
$$\left(
\begin{matrix}
[R_{z_1}^*,R_{z_1}][R_{z_2}^*,R_{z_2}][R_{z_1}^*,R_{z_1}]&0\\
0&[R_{z_1}^*,R_{z_2}][R_{z_2}^*,R_{z_1}]
\end{matrix}
\right).
$$
This implies that $C$ is Hilbert-Schmidt (or compact) if and only if $[R_{z_1}^*,R_{z_1}][R_{z_2}^*,R_{z_2}]$ and $[R_{z_1}^*,R_{z_2}]$ are both Hilbert-Schmidt (or compact). It is known that if $C$ is Hilbert-Schmidt, then the pairs $(R_{z_1},R_{z_2})$ and $(S_{z_1},S_{z_2})$ are Fredholm. Almost all known examples of submodules are Hilbert-Schmidt. The only known non-Hilbert-Schmidt submodule is the submodule $\HM$ with $\dim \HM\ominus (z_1\HM + z_2\HM) = \infty$, in which case $[R_{z_1}^*,R_{z_1}][R_{z_2}^*,R_{z_2}]$ is not compact (\cite{Ya01}). Further, if $\HM$ is Hilbert-Schmidt then it can be shown that $z_1\HM + z_2\HM$ is closed. It is not clear whether this is true for all submodules $\HM$. For $\lambda \in \D^2$, let \[\ind_\lambda \HM = \dim \HM \ominus ((z_1-\lambda_1)\HM + (z_2-\lambda_2)\HM).\] The integer $\ind_\lambda \HM$ is called the index of $\HM$ at $\lambda$. It captures important information of $\HM$ and was studied in \cite{LR}. It is not hard to see that $\ind_\lambda \HM$ is less than or equal to the rank of $\HM$, so if there exists a sequence of $\lambda_n \in \D^2$ such that $\ind_{\lambda_n}\HM$ goes to infinity, then $\HM$ is not finitely generated. It is conjectured in \cite{Ya99} that every finitely generated submodule is Hilbert-Schmidt. This paper confirms the conjecture for submodules containing function $z_1 - \varphi(z_2)$, where $\varphi$ is a finite Blaschke product.

In 2008, the second and the third author studied the submodules $\HM$ generated by $z_1 - \varphi(z_2)$, where $\varphi$ is an inner function (\cite{IY08}), and showed that $\HM = [z_1 - \varphi(z_2)]$ is Hilbert-Schmidt. Moreover, the quotient module $H^2(\D^2) \ominus [z_1 - \varphi(z_2)]$ can be identified with $(H^2(z_2) \ominus \varphi H^2(z_2)) \otimes L^2_a(\D)$ and $S_{z_1}$ is unitarily equivalent to $I \otimes M_z$ on $(H^2(z_2) \ominus \varphi H^2(z_2)) \otimes L^2_a(\D)$, where $L^2_a(\D)$ is the Bergman space. When $\varphi(z_2) = z_2$, this recovers the well-known fact that $S_{z_i} (i = 1, 2)$ on $H^2(\D^2) \ominus [z_1 - z_2]$ is unitarily equivalent to the Bergman shift. In this paper, we look at submodules $\HM$ which contain $z_1 - \varphi(z_2)$, where $\varphi$ is a finite Blaschke product. We obtain a necessary and sufficient condition for such $\HM$ to be Hilbert-Schmidt. As an application, submodules which contain $z_1 - z_2$ are fully characterized. The main result of the paper is the following theorem.
\begin{theorem}\label{hlsmnmcdv}
Let $\varphi$ be a finite Blaschke product and $\HM \in Lat(H^2(\D^2))$ contain $z_1 - \varphi(z_2)$. Then $\HM$ is a Hilbert-Schmidt submodule if and only if $\HM$ is finitely generated.
\end{theorem}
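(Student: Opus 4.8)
The plan is to establish both implications separately. The direction ``Hilbert-Schmidt $\Rightarrow$ finitely generated'' is the easier one, and I would attempt it first via the index theory. If $\HM$ is Hilbert-Schmidt, then as noted in the introduction the pairs $(R_{z_1},R_{z_2})$ and $(S_{z_1},S_{z_2})$ are Fredholm, and moreover $\ind_\lambda \HM$ is finite and in fact locally constant (hence globally bounded on $\D^2$ by a connectedness argument, since $\D^2$ is connected and the index is upper semicontinuous and the relevant maps have closed range near each point). Since $z_1\HM + z_2\HM$ is closed when $\HM$ is Hilbert-Schmidt, $\ind_0\HM = \dim \HM\ominus(z_1\HM+z_2\HM)$ is finite, and a finite uniform bound on $\ind_\lambda\HM$ together with the specific structure coming from $z_1-\varphi(z_2)\in\HM$ forces $\HM$ to be generated by finitely many elements — concretely, one should be able to produce generators by lifting a basis of $\HM\ominus(z_1\HM+z_2\HM)$ and using the module relation $z_1 f = \varphi(z_2)f \pmod{(z_1-\varphi(z_2))\HM}$ to reduce the $z_1$-degree.

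The substantive direction is ``finitely generated $\Rightarrow$ Hilbert-Schmidt.'' Here I would exploit the extra hypothesis $z_1-\varphi(z_2)\in\HM$ heavily. Let $\HN=[z_1-\varphi(z_2)]$ be the submodule generated by $z_1-\varphi(z_2)$ alone; by \cite{IY08} we know $\HN$ is Hilbert-Schmidt and we have an explicit model for $H^2(\D^2)\ominus\HN$, namely $(H^2(z_2)\ominus\varphi H^2(z_2))\otimes L^2_a(\D)$ with $S_{z_1}\cong I\otimes M_z$. Since $\HN\subseteq\HM$, we can work inside the quotient $H^2(\D^2)\ominus\HN$: the submodule $\HM$ corresponds to a submodule $\widetilde{\HM}$ of this quotient module, or dually $\HM\ominus\HN$ sits inside $\HN^\perp\ominus\HM^\perp$. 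Using the model, $\HM\ominus\HN$ becomes a module over $\C[z_2]$ (since $z_1$ acts as $\varphi(z_2)$ modulo $\HN$) sitting inside $(H^2(z_2)\ominus\varphi H^2(z_2))\otimes L^2_a(\D)$, which is finite-rank over $H^2(z_2)$ because $\dim(H^2(z_2)\ominus\varphi H^2(z_2))=\deg\varphi<\infty$. The key computation is to express the core operator $C$ of $\HM$, or equivalently the commutators $[R_{z_1}^*,R_{z_1}][R_{z_2}^*,R_{z_2}]$ and $[R_{z_1}^*,R_{z_2}]$, in terms of the corresponding objects for $\HN$ (which are already trace-class/Hilbert-Schmidt) plus a correction term supported on the finite-dimensional-over-$H^2$ piece $\HM\ominus\HN$. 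The finite generation of $\HM$ should make this correction term have finite rank, or at least Hilbert-Schmidt, after a careful estimate.

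The main obstacle I anticipate is controlling the interaction between the two coordinate actions on $\HM\ominus\HN$ precisely enough. Even though $\HM\ominus\HN$ is ``finite-rank over $H^2(z_2)$'' as a subspace of the Bergman-type model, the operators $R_{z_1},R_{z_2}$ on $\HM$ do not split as a direct sum across $\HN$ and $\HM\ominus\HN$ — only $R_{z_2}\HN\subseteq\HN$ and $R_{z_1}\HN\subseteq\HN$ hold, not the reverse inclusions for the complement. So the compression of $C$ to $\HM\ominus\HN$ will involve cross terms like $P_{\HM\ominus\HN}R_{z_i}P_{\HN}$, and one must show these are Hilbert-Schmidt. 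I expect this to reduce to showing that a certain Hankel-type operator between $L^2_a(\D)$ and a finite-dimensional space, twisted by the finitely many generators of $\HM$ over $\HN$, is Hilbert-Schmidt — which should follow because the generators are bounded functions (polynomials times bounded things) so the associated Hankel operators on the Bergman space are Hilbert-Schmidt by a standard symbol estimate. A secondary technical point will be verifying that a finitely generated $\HM$ containing $z_1-\varphi(z_2)$ really does yield a finitely generated (equivalently, finite-rank over $\C[z_2]$) module $\HM\ominus\HN$ in the model; this needs the Nullstellensatz-type reduction of the $z_1$-degree mentioned above, plus the observation that $(H^2(z_2)\ominus\varphi H^2(z_2))\otimes L^2_a(\D)$ is Noetherian-like in the appropriate sense over the disk algebra acting on the second variable.
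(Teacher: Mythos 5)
Your overall setup---passing to the model $(H^2(z_2)\ominus\varphi H^2(z_2))\otimes L^2_a(\D)$ from \cite{IY08} and exploiting $\dim K_\varphi<\infty$ to get a ``finite rank over $H^2(z_2)$'' picture---matches the paper's, and your observation that finite generation controls $\HM\ominus(z_1\HM+\varphi(z_2)\HM)$ is essentially the paper's Lemma \ref{dmineqlm}. But both of your implications have genuine gaps. For ``finitely generated $\Rightarrow$ Hilbert-Schmidt'' you propose to estimate the core operator directly, writing it as the core operator of $[z_1-\varphi(z_2)]$ plus cross terms, and to control the cross terms by ``a standard symbol estimate'' for Hankel operators ``because the generators are bounded functions.'' This fails twice: the generators $f_1,\dots,f_n$ are arbitrary $H^2(\D^2)$ functions (see Corollary \ref{nchsfgszp}), not bounded; and even bounded symbols do not in general give Hilbert-Schmidt Hankel operators on the Bergman space (Hilbert-Schmidtness there requires a Besov/Dirichlet-type condition on the symbol). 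The paper avoids any such estimate: it shows only that the wandering subspace $\HN\ominus(I\otimes M_z)\HN$ of $\HN=V\HM$ is finite dimensional, deduces that $S_{z_1}\cong S_\HN$ is Fredholm at every point of $\D$ (Lemmas \ref{clrfsnczp}--\ref{espfszozp}), hence $\sigma_e(S_{z_1})\subseteq\T$, and then invokes the criterion of \cite{Ya01} (Theorem \ref{grshcm}) that $\D\not\subseteq\sigma_e(S_{z_1})\cap\sigma_e(S_{z_2})$ already forces $\HM$ to be Hilbert-Schmidt. That soft Fredholm step is the idea your argument is missing.

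For the converse you want to ``lift a basis of $\HM\ominus(z_1\HM+z_2\HM)$ and reduce the $z_1$-degree'' to produce generators. Producing candidate generators is not the issue; proving that they generate is. Finite dimensionality of the wandering-type subspace does not by itself imply that a submodule is generated by it---this is exactly the kind of statement that is false or open in general. The paper closes this gap with Shimorin's wandering subspace theorem for operators close to isometries \cite{Sh01}, applied to $I\otimes M_z$ on $\C^k\otimes L^2_a(\D)$, which yields $\HN=[\HN\ominus(I\otimes M_z)\HN]$ and hence $\HM=[V^*(\HN\ominus(I\otimes M_z)\HN),\,z_1-\varphi(z_2)]$. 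There is also a preliminary step you skip: showing that Hilbert-Schmidtness forces $\dim(\HM\ominus(z_1\HM+\varphi(z_2)\HM))<\infty$ (Lemma \ref{hsipfgcp}) requires an induction on the degree of $\varphi$ via a quotient-map argument, not merely finiteness of $\ind_{(0,0)}\HM$. (Your side remark that $\ind_\lambda\HM$ is locally constant is also false in general; it can jump on $Z(\HM)$.)
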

In Section 2, we define and study the fringe operator $F_\lambda$, where $\lambda \in \D^2$, and show that $F_\lambda$ is Fredholm if and only if the pair $(R_{z_1}-\lambda_1,R_{z_2}-\lambda_2)$ is Fredholm. This result will be used in the proof of Theorem \ref{hlsmnmcdv2} and Proposition \ref{dmfass}. In Section 3, we prove Theorem \ref{hlsmnmcdv}. When submodules contain $z_1 - z_2$, we also determine the dimensions of the cohomology vector spaces for the pairs $(R_{z_1}-\lambda_1,R_{z_2}-\lambda_2)$ and $(S_{z_1}-\lambda_1,S_{z_2}-\lambda_2), \lambda \in \D^2$ (see Proposition \ref{dmfass}).

\section{Fringe operator}
Suppose $\HM$ is a submodule of $H^2(\D^2)$. For $\lambda \in \D^2$, we define the fringe operator $F_\lambda$ on $\HM \ominus (z_1 - \lambda_1) \HM$ by
$$F_\lambda f = P_{\lambda_1} M_{z_2 - \lambda_2} f,\quad f \in \HM \ominus (z_1 - \lambda_1) \HM,$$
where $P_{ \lambda_1}$ is the orthogonal projection from $\HM$ to $\HM \ominus (z_1 - \lambda_1) \HM$. The fringe operator was introduced and studied by the third author in \cite{Ya01}, where the fringe operator $F_{(0,0)}$ was mainly investigated. Let $\varphi_{\lambda_i}(z) = \varphi_{\lambda_i} (z_i) = \frac{z_i - \lambda_i}{1 - \overline{\lambda_i}z_i}, i =1, 2$, and define $\widetilde{F_\lambda} f = P_{\lambda_1} M_{\varphi_{\lambda_2}} f$ for $f \in \ran P_{\lambda_1}$. Then one verifies that $\ran \widetilde{F_\lambda} = \ran F_\lambda$ and $\ker \widetilde{F_\lambda} = \ker F_\lambda$. Let $R_{\varphi_{\lambda_i}} = M_{\varphi_{\lambda_i}}|\HM$ and $P_\HE$ stand for the orthogonal projection from $H^2(\D^2)$ to the closed subspace $\HE$. The following lemma and proposition generalize corresponding facts in \cite{Ya01}.
\begin{lemma}
$\ran F_\lambda = [(z_1 - \lambda_1)\HM + (z_2 - \lambda_2)\HM]\ominus (z_1 - \lambda_1) \HM$.
\end{lemma}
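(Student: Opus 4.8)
The plan is to prove the set equality by showing mutual inclusion. For the inclusion ``$\subseteq$'': take $f \in \HM \ominus (z_1 - \lambda_1)\HM$. Then $F_\lambda f = P_{\lambda_1} M_{z_2 - \lambda_2} f = (z_2 - \lambda_2) f - (I - P_{\lambda_1})(z_2 - \lambda_2) f$. Since $(z_2 - \lambda_2) f \in (z_2 - \lambda_2)\HM$ and the second term lies in $(z_1 - \lambda_1)\HM$ (because $I - P_{\lambda_1}$ is the projection onto $(z_1 - \lambda_1)\HM$), we get $F_\lambda f \in (z_1 - \lambda_1)\HM + (z_2 - \lambda_2)\HM$. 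Also $F_\lambda f \in \ran P_{\lambda_1} = \HM \ominus (z_1 - \lambda_1)\HM$, so $F_\lambda f$ is orthogonal to $(z_1 - \lambda_1)\HM$; hence $F_\lambda f$ lies in $[(z_1 - \lambda_1)\HM + (z_2 - \lambda_2)\HM] \ominus (z_1 - \lambda_1)\HM$. Note $\ran F_\lambda$ need not be closed, so strictly speaking this shows $\ran F_\lambda$ is contained in that difference of subspaces; one should phrase the statement (and the argument) so that both sides are understood as the same subspace, taking closures consistently, which the subsequent proposition will need anyway.

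For the reverse inclusion ``$\supseteq$'': let $g \in [(z_1 - \lambda_1)\HM + (z_2 - \lambda_2)\HM] \ominus (z_1 - \lambda_1)\HM$. Write $g = (z_1 - \lambda_1) h_1 + (z_2 - \lambda_2) h_2$ with $h_1, h_2 \in \HM$. Decompose $h_2 = f + (z_1 - \lambda_1) k$ where $f = P_{\lambda_1} h_2 \in \HM \ominus (z_1 - \lambda_1)\HM$ and $k \in \HM$. Then
\[
g = (z_1 - \lambda_1)\bigl(h_1 + (z_2 - \lambda_2) k\bigr) + (z_2 - \lambda_2) f .
\]
Now $(z_2 - \lambda_2) f = F_\lambda f + (I - P_{\lambda_1})(z_2 - \lambda_2) f$, and the latter term is in $(z_1 - \lambda_1)\HM$, so $g - F_\lambda f \in (z_1 - \lambda_1)\HM$. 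But $g \perp (z_1 - \lambda_1)\HM$ and $F_\lambda f \perp (z_1 - \lambda_1)\HM$, so $g - F_\lambda f$ is an element of $(z_1 - \lambda_1)\HM$ that is also orthogonal to $(z_1 - \lambda_1)\HM$, forcing $g - F_\lambda f = 0$. Thus $g = F_\lambda f \in \ran F_\lambda$.

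The main subtlety to watch is the interplay with closures: $(z_1 - \lambda_1)\HM$ is automatically closed (it is the range of an operator bounded below on $\HM$, since multiplication by the inner function $\varphi_{\lambda_1}$ is an isometry and $\frac{1 - \overline{\lambda_1} z_1}{z_1 - \lambda_1}$ arguments show $M_{z_1 - \lambda_1}$ is bounded below on $\HM$), so projecting onto $\HM \ominus (z_1 - \lambda_1)\HM$ is unambiguous, and the decomposition $h_2 = f + (z_1 - \lambda_1)k$ is legitimate. However $(z_1 - \lambda_1)\HM + (z_2 - \lambda_2)\HM$ may fail to be closed, so the identity should be read as an equality of (not necessarily closed) linear manifolds, or one takes closures on both sides uniformly; either reading makes the two-inclusion argument above go through verbatim. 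I expect the orthogonality bookkeeping in the reverse inclusion — specifically the step that concludes $g - F_\lambda f = 0$ from it being simultaneously in and orthogonal to $(z_1-\lambda_1)\HM$ — to be the only place requiring care, and it is short once the decomposition of $h_2$ is set up correctly.
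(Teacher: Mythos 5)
Your proof is correct and follows essentially the same route as the paper's: both inclusions rest on splitting $(z_2-\lambda_2)f$ into its $P_{\lambda_1}$-component and a remainder lying in $(z_1-\lambda_1)\HM$, then using orthogonality to $(z_1-\lambda_1)\HM$ to force the remainder to vanish. Your explicit decomposition $h_2 = P_{\lambda_1}h_2 + (z_1-\lambda_1)k$ is just a slightly more careful version of the paper's case split on whether $g$ lies in $(z_1-\lambda_1)\HM$ or its orthocomplement, and your worry about closures is harmless, since the two-inclusion argument in fact yields exact equality of the sets involved.
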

\begin{proof}
If $g \in \HM \ominus (z_1 - \lambda_1) \HM$, then
$$F_\lambda g = (z_2 - \lambda_2)g - (P_\HM - P_{\lambda_1}) (z_2 - \lambda_2)g \in (z_1 - \lambda_1)\HM + (z_2 - \lambda_2)\HM.$$
Conversely, let $h = (z_1 - \lambda_1) f + (z_2 - \lambda_2) g \in [(z_1 - \lambda_1)\HM + (z_2 - \lambda_2)\HM]\ominus (z_1 - \lambda_1) \HM$. If $g \in (z_1 - \lambda_1)\HM$, then it is clear that $h = 0$ and $F_\lambda 0 = 0$. So suppose $g \in \HM \ominus (z_1 - \lambda_1) \HM$. Note that
\begin{align*}
h &= (z_1 - \lambda_1) f + (z_2 - \lambda_2) g\\
& = (z_1 - \lambda_1) f + P_{\lambda_1} (z_2 - \lambda_2) g + (P_\HM - P_{\lambda_1}) (z_2 - \lambda_2) g,
\end{align*}
and $(z_1 - \lambda_1) f + (P_\HM - P_{\lambda_1}) (z_2 - \lambda_2) g \in (z_1 - \lambda_1) \HM$. This implies
$$(z_1 - \lambda_1) f + (P_\HM - P_{\lambda_1}) (z_2 - \lambda_2) g = 0.$$
It then follows that
$$F_\lambda g = (z_2 - \lambda_2)g - (P_\HM - P_{\lambda_1}) (z_2 - \lambda_2)g = (z_2 - \lambda_2)g + (z_1 - \lambda_1) f = h.$$
The proof is complete.
\end{proof}
It follows from the above lemma that $\ker F_\lambda^* = \HM \ominus [(z_1 - \lambda_1)\HM + (z_2 - \lambda_2)\HM]$ and $\dim \ker F_\lambda^* = \ind_\lambda \HM$. The following two propositions will be used in the proof of Proposition \ref{indeve}. Let $P_{ \lambda_2}$ be the orthogonal projection from $\HM$ to $\HM \ominus (z_2 - \lambda_2) \HM$. For convenience, we let $p = P_\HM$.
\begin{prop}\label{rlspoaci}
For $f \in \HM \ominus (z_1 - \lambda_1) \HM$, we have\\
(i) $f - \widetilde{F_\lambda}^*\widetilde{F_\lambda} f= [R_{\varphi_{\lambda_2}}^*, R_{\varphi_{\lambda_1}}] [R_{\varphi_{\lambda_1}}^*,R_{\varphi_{\lambda_2}}]f$;\\
(ii) $f - \widetilde{F_\lambda}\widetilde{F_\lambda}^* f= [R_{\varphi_{\lambda_1}}^*, R_{\varphi_{\lambda_1}}] [R_{\varphi_{\lambda_2}}^*,R_{\varphi_{\lambda_2}}]f$.
\end{prop}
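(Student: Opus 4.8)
The plan is to compute $\widetilde{F_\lambda}^*\widetilde{F_\lambda}$ and $\widetilde{F_\lambda}\widetilde{F_\lambda}^*$ directly by unwinding the definitions in terms of the operators $R_{\varphi_{\lambda_1}}$ and $R_{\varphi_{\lambda_2}}$, using the identity $P_{\lambda_1} = I_{\HM} - R_{\varphi_{\lambda_1}}R_{\varphi_{\lambda_1}}^*$ (since $(z_1-\lambda_1)\HM = \varphi_{\lambda_1}\HM = \ran R_{\varphi_{\lambda_1}}$ and $R_{\varphi_{\lambda_1}}$ is a multiplication by an inner function on $\HM$, hence a pure isometry with $R_{\varphi_{\lambda_1}}R_{\varphi_{\lambda_1}}^*$ the projection onto its range). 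I would first record the two basic facts I will lean on throughout: $R_{\varphi_{\lambda_1}}$ and $R_{\varphi_{\lambda_2}}$ are commuting isometries on $\HM$, and $P_{\lambda_1} = I - R_{\varphi_{\lambda_1}}R_{\varphi_{\lambda_1}}^*$, $P_{\lambda_2} = I - R_{\varphi_{\lambda_2}}R_{\varphi_{\lambda_2}}^*$. Also, for $f \in \ran P_{\lambda_1}$ we have $R_{\varphi_{\lambda_1}}^* f = 0$, and $\widetilde{F_\lambda}f = P_{\lambda_1}R_{\varphi_{\lambda_2}}f = R_{\varphi_{\lambda_2}}f - R_{\varphi_{\lambda_1}}R_{\varphi_{\lambda_1}}^* R_{\varphi_{\lambda_2}} f$; dually, for $g$ in the target space, $\widetilde{F_\lambda}^* g = P_{\lambda_1} R_{\varphi_{\lambda_2}}^* g$.

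For part (i), I would compute $\widetilde{F_\lambda}^*\widetilde{F_\lambda}f = P_{\lambda_1}R_{\varphi_{\lambda_2}}^* P_{\lambda_1} R_{\varphi_{\lambda_2}} f$ and substitute $P_{\lambda_1} = I - R_{\varphi_{\lambda_1}}R_{\varphi_{\lambda_1}}^*$ in both slots, then expand. Using $R_{\varphi_{\lambda_1}}^* f = 0$ to kill the terms that end in $f$ directly, and using commutativity of the two isometries to move $R_{\varphi_{\lambda_1}}$'s past $R_{\varphi_{\lambda_2}}$'s, the expression should collapse to $f - [R_{\varphi_{\lambda_2}}^*,R_{\varphi_{\lambda_1}}][R_{\varphi_{\lambda_1}}^*,R_{\varphi_{\lambda_2}}]f$; rearranging gives the claimed formula for $f - \widetilde{F_\lambda}^*\widetilde{F_\lambda}f$. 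For part (ii), the analogous computation starts from $\widetilde{F_\lambda}\widetilde{F_\lambda}^* f = P_{\lambda_1}R_{\varphi_{\lambda_2}} P_{\lambda_1} R_{\varphi_{\lambda_2}}^* f$, again with $f \in \ran P_{\lambda_1}$, and the same bookkeeping should produce $f - [R_{\varphi_{\lambda_1}}^*,R_{\varphi_{\lambda_1}}][R_{\varphi_{\lambda_2}}^*,R_{\varphi_{\lambda_2}}]f$. The two cases are parallel, differing only in which of $R_{\varphi_{\lambda_2}}$, $R_{\varphi_{\lambda_2}}^*$ sits on the inside.

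The main obstacle is purely organizational: after substituting the projections one gets a sum of eight (or more) words in $R_{\varphi_{\lambda_1}}, R_{\varphi_{\lambda_1}}^*, R_{\varphi_{\lambda_2}}, R_{\varphi_{\lambda_2}}^*$, and one must carefully use the isometry relations ($R_{\varphi_{\lambda_i}}^* R_{\varphi_{\lambda_i}} = I$), the commutation relation $R_{\varphi_{\lambda_1}}R_{\varphi_{\lambda_2}} = R_{\varphi_{\lambda_2}}R_{\varphi_{\lambda_1}}$ (and its adjoint), and the vanishing $R_{\varphi_{\lambda_1}}^* f = 0$ to recognize that the leftover terms assemble into the stated product of commutators. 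One should be slightly careful that $\widetilde F_\lambda$ maps $\ran P_{\lambda_1}$ into $\ran P_{\lambda_1}$ so that the compositions and adjoints are taken in the right space; this is exactly why $P_{\lambda_1}$ appears on the outside of each expression, and it is harmless since applying $P_{\lambda_1}$ to something already in $\ran P_{\lambda_1}$ changes nothing. Once the algebra is arranged, both identities drop out; no analytic input beyond Beurling-type structure of $R_{\varphi_{\lambda_1}}$ is needed.
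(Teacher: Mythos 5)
Your proposal is correct and is essentially the paper's own argument: the identity $P_{\lambda_1}=I_{\HM}-R_{\varphi_{\lambda_1}}R_{\varphi_{\lambda_1}}^*$ you rely on is exactly the step where the paper writes $(p-P_{\lambda_1})M_{\varphi_{\lambda_2}}f=M_{\varphi_{\lambda_1}}pM_{\varphi_{\lambda_1}}^*M_{\varphi_{\lambda_2}}f$, and both proofs then reduce the commutator products using $R_{\varphi_{\lambda_1}}^*f=0$ and $[R_{\varphi_{\lambda_i}}^*,R_{\varphi_{\lambda_i}}]=P_{\lambda_i}$. The only difference is notational (you work with the restricted operators $R_{\varphi_{\lambda_i}}$ throughout, the paper with $M_{\varphi_{\lambda_i}}$ and $p$), and your algebra does close up as claimed.
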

\begin{proof}
(i) If $f \in (z_1 - \lambda_1)\HM$, then $[R_{\varphi_{\lambda_1}}^*,R_{\varphi_{\lambda_2}}] f = 0$. Thus $[R_{\varphi_{\lambda_1}}^*,R_{\varphi_{\lambda_2}}] = [R_{\varphi_{\lambda_1}}^*,R_{\varphi_{\lambda_2}}] P_{\lambda_1}$. Since $R_{\varphi_{\lambda_1}}^* P_{\lambda_1} = 0$, we have
\begin{align*}
&[R_{\varphi_{\lambda_1}}^*,R_{\varphi_{\lambda_2}}] = [R_{\varphi_{\lambda_1}}^*,R_{\varphi_{\lambda_2}}] P_{\lambda_1}\\
& = R_{\varphi_{\lambda_1}}^*R_{\varphi_{\lambda_2}} P_{\lambda_1}\\
& = p M_{\varphi_{\lambda_1}}^* M_{\varphi_{\lambda_2}}P_{\lambda_1}.
\end{align*}
Hence
\begin{align}\label{dcpipt}
[R_{\varphi_{\lambda_2}}^*, R_{\varphi_{\lambda_1}}] [R_{\varphi_{\lambda_1}}^*,R_{\varphi_{\lambda_2}}] = P_{\lambda_1} M_{\varphi_{\lambda_2}}^* M_{\varphi_{\lambda_1}} p M_{\varphi_{\lambda_1}}^* M_{\varphi_{\lambda_2}}P_{\lambda_1}.
\end{align}
On the other hand, for $f \in \HM \ominus (z_1 - \lambda_1) \HM$,
\begin{align}\label{fmftsft}
& f - \widetilde{F_\lambda}^*\widetilde{F_\lambda}f = f - P_{\lambda_1} M_{\varphi_{\lambda_2}}^* P_{\lambda_1} M_{\varphi_{\lambda_2}} f \notag\\
& = f - [P_{\lambda_1} f - P_{\lambda_1} M_{\varphi_{\lambda_2}}^* (p -P_{\lambda_1}) M_{\varphi_{\lambda_2}} f] \notag\\
& = P_{\lambda_1} M_{\varphi_{\lambda_2}}^* (p -P_{\lambda_1}) M_{\varphi_{\lambda_2}} f \notag\\
& = P_{\lambda_1} M_{\varphi_{\lambda_2}}^* M_{\varphi_{\lambda_1}} p M_{\varphi_{\lambda_1}}^* M_{\varphi_{\lambda_2}} f,
\end{align}
where in the last equality we used $(p -P_{\lambda_1}) M_{\varphi_{\lambda_2}} f = M_{\varphi_{\lambda_1}} p M_{\varphi_{\lambda_1}}^* M_{\varphi_{\lambda_2}} f$. Therefore the conclusion follows from (\ref{dcpipt}) and (\ref{fmftsft}).

(ii) Note that $P_{\lambda_1} M_{\varphi_{\lambda_2}}^*P_{\lambda_1} = p M_{\varphi_{\lambda_2}}^*P_{\lambda_1}$. Hence for $f \in \HM \ominus (z_1 - \lambda_1) \HM$,
\begin{align*}
f - \widetilde{F_\lambda}\widetilde{F_\lambda}^* f& = f - P_{\lambda_1} M_{\varphi_{\lambda_2}}P_{\lambda_1} M_{\varphi_{\lambda_2}}^*f\\
& = P_{\lambda_1}f - P_{\lambda_1} M_{\varphi_{\lambda_2}}p M_{\varphi_{\lambda_2}}^*f\\
& = P_{\lambda_1} [p - pM_{\varphi_{\lambda_2}}p M_{\varphi_{\lambda_2}}^*p]P_{\lambda_1} f\\
& = P_{\lambda_1} P_{\lambda_2} f.
\end{align*}
Since $[R_{\varphi_{\lambda_1}}^*, R_{\varphi_{\lambda_1}}] = P_{\lambda_i}$ are projections onto $\HM \ominus (z_i - \lambda_i)\HM$, the assertion follows from the above equation.
\end{proof}

If $[R_{z_1}^*, R_{z_2}]$ is compact, then $[R_{\varphi_{\lambda_1}}^*,R_{\varphi_{\lambda_2}}]$ is compact for every $\lambda \in \D^2$. Hence in this case the above proposition implies that for every $\lambda \in \D^2$, the fringe operator $F_\lambda = F_{(\lambda_1,0)} - \lambda_2$ is left semi-Fredholm.

Similarly, for $\lambda \in \D^2$ we let $G_\lambda$ and $\widetilde{G_\lambda}$ be defined by $G_\lambda f = P_{\lambda_2} M_{z_1 - \lambda_1} f$ and $\widetilde{G_\lambda} f = P_{\lambda_2} M_{\varphi_{\lambda_1}} f$ for $f \in \ran P_{\lambda_2}$. Then $G_\lambda$ and $\widetilde{G_\lambda}$ have the same range and kernel. The following result is thus parallel to Proposition 2.2.
\begin{prop}\label{srsfgl}
(i) $\ran G_\lambda = [(z_1 - \lambda_1)\HM + (z_2 - \lambda_2)\HM]\ominus (z_2 - \lambda_2) \HM$;\\
(ii) for $f \in \HM \ominus (z_2 - \lambda_2) \HM$, $f - \widetilde{G_\lambda}^*\widetilde{G_\lambda} f= [R_{\varphi_{\lambda_1}}^*,R_{\varphi_{\lambda_2}}] [R_{\varphi_{\lambda_2}}^*, R_{\varphi_{\lambda_1}}] f$;\\
(iii) for $f \in \HM \ominus (z_2 - \lambda_2) \HM$, $f - \widetilde{G_\lambda}\widetilde{G_\lambda}^* f=[R_{\varphi_{\lambda_2}}^*,R_{\varphi_{\lambda_2}}] [R_{\varphi_{\lambda_1}}^*, R_{\varphi_{\lambda_1}}] f$.
\end{prop}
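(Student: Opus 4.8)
The plan is to derive all three parts of Proposition \ref{srsfgl} from the lemma and Proposition \ref{rlspoaci} by interchanging the roles of the two coordinates, i.e.\ performing the substitution $z_1\leftrightarrow z_2$, $\lambda_1\leftrightarrow\lambda_2$, $\varphi_{\lambda_1}\leftrightarrow\varphi_{\lambda_2}$, $P_{\lambda_1}\leftrightarrow P_{\lambda_2}$, with $p=P_\HM$ unchanged. Under this swap $G_\lambda$ and $\widetilde{G_\lambda}$ become literally $F_\lambda$ and $\widetilde{F_\lambda}$, and the equalities $\ran\widetilde{G_\lambda}=\ran G_\lambda$, $\ker\widetilde{G_\lambda}=\ker G_\lambda$ hold by the same verification already noted. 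So each item of the proposition is the mirror image of a statement in hand; what remains is to transcribe the mirrored arguments and keep the symmetry bookkeeping straight.

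For (i), one repeats the proof of the lemma: for $f\in\HM\ominus(z_2-\lambda_2)\HM$, $G_\lambda f=(z_1-\lambda_1)f-(p-P_{\lambda_2})(z_1-\lambda_1)f$ lies in $(z_1-\lambda_1)\HM+(z_2-\lambda_2)\HM$ and is orthogonal to $(z_2-\lambda_2)\HM$ by construction; conversely, writing a given $h\in[(z_1-\lambda_1)\HM+(z_2-\lambda_2)\HM]\ominus(z_2-\lambda_2)\HM$ as $h=(z_1-\lambda_1)f+(z_2-\lambda_2)g$ with $f\in\HM\ominus(z_2-\lambda_2)\HM$ (absorbing the remaining part of $f$ into the $(z_2-\lambda_2)\HM$ summand), the component $(z_2-\lambda_2)g+(p-P_{\lambda_2})(z_1-\lambda_1)f\in(z_2-\lambda_2)\HM$ of $h$ must vanish, whence $G_\lambda f=h$.

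For (ii) and (iii), one mirrors the proof of Proposition \ref{rlspoaci}. Since $[R_{\varphi_{\lambda_2}}^*,R_{\varphi_{\lambda_1}}]$ vanishes on $(z_2-\lambda_2)\HM$ and $R_{\varphi_{\lambda_2}}^*P_{\lambda_2}=0$, one gets $[R_{\varphi_{\lambda_2}}^*,R_{\varphi_{\lambda_1}}]=pM_{\varphi_{\lambda_2}}^*M_{\varphi_{\lambda_1}}P_{\lambda_2}$, hence
$$[R_{\varphi_{\lambda_1}}^*,R_{\varphi_{\lambda_2}}][R_{\varphi_{\lambda_2}}^*,R_{\varphi_{\lambda_1}}]=P_{\lambda_2}M_{\varphi_{\lambda_1}}^*M_{\varphi_{\lambda_2}}\,p\,M_{\varphi_{\lambda_2}}^*M_{\varphi_{\lambda_1}}P_{\lambda_2}.$$
On the other hand, for $f\in\HM\ominus(z_2-\lambda_2)\HM$ a direct computation gives $f-\widetilde{G_\lambda}^*\widetilde{G_\lambda}f=P_{\lambda_2}M_{\varphi_{\lambda_1}}^*(p-P_{\lambda_2})M_{\varphi_{\lambda_1}}f$, and the identity $(p-P_{\lambda_2})g=M_{\varphi_{\lambda_2}}pM_{\varphi_{\lambda_2}}^*g$ for $g\in\HM$ — valid because $R_{\varphi_{\lambda_2}}$ is an isometry on $\HM$ with range $(z_2-\lambda_2)\HM$, so $R_{\varphi_{\lambda_2}}R_{\varphi_{\lambda_2}}^*=p-P_{\lambda_2}$ on $\HM$ — converts this into the right side of the display, proving (ii). For (iii), using $P_{\lambda_2}M_{\varphi_{\lambda_1}}^*P_{\lambda_2}=pM_{\varphi_{\lambda_1}}^*P_{\lambda_2}$ one finds $f-\widetilde{G_\lambda}\widetilde{G_\lambda}^*f=P_{\lambda_2}[\,p-pM_{\varphi_{\lambda_1}}pM_{\varphi_{\lambda_1}}^*p\,]P_{\lambda_2}f=P_{\lambda_2}P_{\lambda_1}f$, since $p-R_{\varphi_{\lambda_1}}R_{\varphi_{\lambda_1}}^*=P_{\lambda_1}$; recalling $[R_{\varphi_{\lambda_i}}^*,R_{\varphi_{\lambda_i}}]=P_{\lambda_i}$ gives (iii).

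I do not expect a genuine obstacle here: the whole argument is a line-by-line reflection of results already proved. The only points needing care are keeping track of which of $\lambda_1,\lambda_2$ appears where after the interchange, and the two projection identities used above, both of which reduce to the facts that $\varphi_{\lambda_2}$ is inner and $(z_2-\lambda_2)\HM=\varphi_{\lambda_2}\HM$ (the generators differing by the invertible multiplier $(1-\overline{\lambda_2}z_2)^{\pm1}$), so that $M_{\varphi_{\lambda_2}}pM_{\varphi_{\lambda_2}}^*$ is the orthogonal projection of $\HM$ onto $(z_2-\lambda_2)\HM$.
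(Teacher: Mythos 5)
Your proposal is correct and is exactly what the paper intends: the paper states Proposition \ref{srsfgl} without proof, remarking only that it is ``parallel to Proposition 2.2,'' and your argument is precisely that symmetry $z_1\leftrightarrow z_2$ carried out line by line, with the two projection identities ($M_{\varphi_{\lambda_2}}pM_{\varphi_{\lambda_2}}^*=p-P_{\lambda_2}$ and $P_{\lambda_2}M_{\varphi_{\lambda_1}}^*P_{\lambda_2}=pM_{\varphi_{\lambda_1}}^*P_{\lambda_2}$) justified correctly. No gaps.
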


Now we discuss the Koszul complex of the pair $R = (R_{z_1}, R_{z_2})$. The Koszul complex of $R$ is defined by
$$K(R): 0 \xrightarrow{\partial_R^{-1}} \HM \xrightarrow{\partial_R^{0}} \HM \oplus \HM \xrightarrow{\partial_R^{1}} \HM \xrightarrow{\partial_R^{2}} 0,$$
where $\partial_R^{0} f = (R_{z_1} f, R_{z_2}f)$ and $\partial_R^{1} (f, g) = R_{z_1}g - R_{z_2} f$. The pair $R$ is called a Fredholm pair if all the maps have closed range and the cohomology vector space $\ker \partial_R^{i}/\ran \partial_R^{i-1}$ is finite dimensional for $i = 0, 1$ and $2$ (see \cite{Cu81, GRS05}). If $R$ is a Fredholm pair, then the index of $R$ is defined by
$$\ind R = \sum_{i=0}^2 (-1)^i \dim (\ker \partial_R^{i}/\ran \partial_R^{i-1}) = \ind_{(0,0)}\HM - \dim (\ker \partial_R^{1}/\ran \partial_R^{0}).$$
The essential Taylor spectrum of $R$ is defined to be
$$\sigma_e(R) = \{\lambda \in \C^2: R - \lambda ~~\text{is not Fredholm}\}.$$

For $\lambda \in \D^2$, we have
$$\ker \partial_{R - \lambda}^1 = \{(f,g): (z_1 - \lambda_1)g = (z_2 - \lambda_2)f, f, g \in \HM\},$$
$$\ran \partial_{R - \lambda}^0 = \{((z_1 - \lambda_1)f, (z_2 - \lambda_2)f): f \in \HM\}.$$
Let $\ker \widetilde{\partial}^1 = \{(f,g): \varphi_{\lambda_1}g = \varphi_{\lambda_2}f, f, g \in \HM\}$ and $\ran \widetilde{\partial}^0 = \{(\varphi_{\lambda_1}f, \varphi_{\lambda_2}f): f \in \HM\}$. It is easy to see that the map $U: \ker \partial_{R - \lambda}^1 \rightarrow \ker \widetilde{\partial}^1$ defined by
$$U (f,g) = ((1 - \overline{\lambda_2}z_2)f, (1 - \overline{\lambda_1}z_1)g)$$
is one-to-one and onto. Observe that
$$U(\partial_{R - \lambda}^1 / \ran \partial_{R - \lambda}^0) = \ker \widetilde{\partial}^1 / \ran \widetilde{\partial}^0,$$
thus $\ker \partial_{R - \lambda}^1 \ominus \ran \partial_{R - \lambda}^0$ is isomorphic to $\ker \widetilde{\partial}^1 \ominus \ran \widetilde{\partial}^0$. We show in the following that $\ker F_\lambda$ is isomorphic to $\ker \partial_{R - \lambda}^1 \ominus \ran \partial_{R - \lambda}^0$.
\begin{lemma}\label{rlbfadots}
Let $\ker \widetilde{\partial}^1$ and $\ran \widetilde{\partial}^0$ be as above, then
\begin{align*}
\ker \widetilde{\partial}^1 \ominus \ran \widetilde{\partial}^0 &= \{(f,g): g = M_{\varphi_{\lambda_1}}^* M_{\varphi_{\lambda_2}}f, f \in \ran P_{\lambda_1}, M_{\varphi_{\lambda_2}} f \in \varphi_{\lambda_1} \HM\}.\\
& = \{(f,g): f = M_{\varphi_{\lambda_2}}^* M_{\varphi_{\lambda_1}}g, g \in \ran P_{\lambda_2}, M_{\varphi_{\lambda_1}} g \in \varphi_{\lambda_2} \HM\}.
\end{align*}
\end{lemma}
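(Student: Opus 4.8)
The plan is to derive the two displayed descriptions by separately encoding the two conditions that define $\ker\widetilde{\partial}^1\ominus\ran\widetilde{\partial}^0$: membership in $\ker\widetilde{\partial}^1$, i.e. $\varphi_{\lambda_1}g=\varphi_{\lambda_2}f$, and orthogonality to $\ran\widetilde{\partial}^0$. First I would record the standing facts I need. Each $\varphi_{\lambda_i}$ is an inner function of the single variable $z_i$, so $M_{\varphi_{\lambda_i}}$ is an isometry on $H^2(\D^2)$, $M_{\varphi_{\lambda_i}}^*M_{\varphi_{\lambda_i}}=I$, and $M_{\varphi_{\lambda_1}}$, $M_{\varphi_{\lambda_2}}$ (hence also $M_{\varphi_{\lambda_1}}^*$, $M_{\varphi_{\lambda_2}}^*$, and $M_{\varphi_{\lambda_1}}^*$, $M_{\varphi_{\lambda_2}}$) commute. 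Since $1-\overline{\lambda_i}z_i$ is a polynomial in $z_i$ and $(1-\overline{\lambda_i}z_i)^{-1}$ is a norm limit in $H^\infty$ of polynomials in $z_i$, both $M_{(1-\overline{\lambda_i}z_i)^{\pm1}}$ map $\HM$ into $\HM$; applying this to $\HM$, to $(z_i-\lambda_i)\HM$ and to $\varphi_{\lambda_i}\HM$ (each a submodule) yields $\varphi_{\lambda_i}\HM=(z_i-\lambda_i)\HM$, so that $\ran P_{\lambda_i}=\HM\ominus(z_i-\lambda_i)\HM=\HM\ominus\varphi_{\lambda_i}\HM=\ker R_{\varphi_{\lambda_i}}^*$, the range of the projection $[R_{\varphi_{\lambda_i}}^*,R_{\varphi_{\lambda_i}}]=I-R_{\varphi_{\lambda_i}}R_{\varphi_{\lambda_i}}^*$ appearing in Proposition \ref{rlspoaci}.

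Step one: describe $\ker\widetilde{\partial}^1$. Given $f,g\in\HM$ with $\varphi_{\lambda_1}g=\varphi_{\lambda_2}f$, apply $M_{\varphi_{\lambda_1}}^*$ and use $M_{\varphi_{\lambda_1}}^*M_{\varphi_{\lambda_1}}=I$ to get $g=M_{\varphi_{\lambda_1}}^*M_{\varphi_{\lambda_2}}f$, while $M_{\varphi_{\lambda_2}}f=\varphi_{\lambda_1}g\in\varphi_{\lambda_1}\HM$; symmetrically, applying $M_{\varphi_{\lambda_2}}^*$ gives $f=M_{\varphi_{\lambda_2}}^*M_{\varphi_{\lambda_1}}g$ and $M_{\varphi_{\lambda_1}}g=\varphi_{\lambda_2}f\in\varphi_{\lambda_2}\HM$. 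Conversely, if $f,g\in\HM$, $M_{\varphi_{\lambda_2}}f\in\varphi_{\lambda_1}\HM$ and $g=M_{\varphi_{\lambda_1}}^*M_{\varphi_{\lambda_2}}f$, then $\varphi_{\lambda_1}g=M_{\varphi_{\lambda_1}}M_{\varphi_{\lambda_1}}^*M_{\varphi_{\lambda_2}}f=M_{\varphi_{\lambda_2}}f$, since $M_{\varphi_{\lambda_1}}M_{\varphi_{\lambda_1}}^*$ is the projection onto $\varphi_{\lambda_1}H^2(\D^2)$ and $M_{\varphi_{\lambda_2}}f$ lies in it; hence $(f,g)\in\ker\widetilde{\partial}^1$, and the index-swapped version is identical. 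This already yields the two set-theoretic descriptions of $\ker\widetilde{\partial}^1$ underlying the lemma.

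Step two: impose orthogonality. For $f,g\in\HM$ and any $h\in\HM$, $\langle f,\varphi_{\lambda_1}h\rangle+\langle g,\varphi_{\lambda_2}h\rangle=\langle R_{\varphi_{\lambda_1}}^*f+R_{\varphi_{\lambda_2}}^*g,\,h\rangle$, so $(f,g)\perp\ran\widetilde{\partial}^0$ exactly when $R_{\varphi_{\lambda_1}}^*f+R_{\varphi_{\lambda_2}}^*g=0$. The crucial observation is that on $\ker\widetilde{\partial}^1$ this single equation forces both summands to vanish: substituting $g=M_{\varphi_{\lambda_1}}^*M_{\varphi_{\lambda_2}}f$ and using commutativity together with $M_{\varphi_{\lambda_2}}^*M_{\varphi_{\lambda_2}}=I$ gives $R_{\varphi_{\lambda_2}}^*g=P_{\HM}M_{\varphi_{\lambda_2}}^*M_{\varphi_{\lambda_1}}^*M_{\varphi_{\lambda_2}}f=P_{\HM}M_{\varphi_{\lambda_1}}^*f=R_{\varphi_{\lambda_1}}^*f$, whence $R_{\varphi_{\lambda_1}}^*f+R_{\varphi_{\lambda_2}}^*g=0$ iff $R_{\varphi_{\lambda_1}}^*f=0$, i.e. $f\in\ker R_{\varphi_{\lambda_1}}^*=\ran P_{\lambda_1}$, equivalently $R_{\varphi_{\lambda_2}}^*g=0$, i.e. $g\in\ran P_{\lambda_2}$. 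Combining this with the two descriptions of $\ker\widetilde{\partial}^1$ from step one produces precisely the two displayed formulas.

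I do not anticipate a genuine obstacle: the content is a reorganization of the defining conditions, resting on $M_{\varphi_{\lambda_i}}$ being an isometry and on $\varphi_{\lambda_i}\HM=(z_i-\lambda_i)\HM$. The one point needing care is the $P_{\HM}$-bookkeeping behind the collapse $R_{\varphi_{\lambda_2}}^*g=R_{\varphi_{\lambda_1}}^*f$ on $\ker\widetilde{\partial}^1$; this identity is exactly what turns the orthogonality relation into the single membership condition $f\in\ran P_{\lambda_1}$ (equivalently $g\in\ran P_{\lambda_2}$), and everything else is formal manipulation with the commuting isometries $M_{\varphi_{\lambda_1}}$ and $M_{\varphi_{\lambda_2}}$.
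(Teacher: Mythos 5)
Your proposal is correct and follows essentially the same route as the paper: both reduce membership in $\ker\widetilde{\partial}^1$ to the conditions $g=M_{\varphi_{\lambda_1}}^*M_{\varphi_{\lambda_2}}f$ and $M_{\varphi_{\lambda_2}}f\in\varphi_{\lambda_1}\HM$, and both rest on the identity $\langle g,\varphi_{\lambda_2}h\rangle=\langle f,\varphi_{\lambda_1}h\rangle$ (your $R_{\varphi_{\lambda_2}}^*g=R_{\varphi_{\lambda_1}}^*f$ is the paper's ``$=2\langle f,\varphi_{\lambda_1}h\rangle$'' step in adjoint form) to convert orthogonality to $\ran\widetilde{\partial}^0$ into $f\in\ran P_{\lambda_1}$. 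Your explicit justification that $\varphi_{\lambda_i}\HM=(z_i-\lambda_i)\HM$ is a point the paper leaves implicit, but otherwise the arguments coincide.
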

\begin{proof}
We prove the first equality, the other equality follows from a similar argument. We first show that the set $\HN = \{(f,g): g = M_{\varphi_{\lambda_1}}^* M_{\varphi_{\lambda_2}}f, f \in \ran P_{\lambda_1}, M_{\varphi_{\lambda_2}} f \in \varphi_{\lambda_1} \HM\}$ is contained in $\ker \widetilde{\partial}^1 \ominus \ran \widetilde{\partial}^0$. Let $(f,g) \in \HN$. Then $M_{\varphi_{\lambda_2}} f \in \varphi_{\lambda_1} \HM$ and $g = M_{\varphi_{\lambda_1}}^* M_{\varphi_{\lambda_2}}f$. So $\varphi_{\lambda_1} g = \varphi_{\lambda_2}f$, i.e. $(f,g) \in \ker \widetilde{\partial}^1$. Note that $(\varphi_{\lambda_1} h, \varphi_{\lambda_2} h) \in \ran \widetilde{\partial}^0$, and
\begin{align}\label{inpbtfo}
\langle(f,g), (\varphi_{\lambda_1} h, \varphi_{\lambda_2} h)\rangle& = \langle f, \varphi_{\lambda_1} h\rangle + \langle g, \varphi_{\lambda_2} h\rangle\\
& = 2 \langle f, \varphi_{\lambda_1} h\rangle \notag\\
& = 0. \notag
\end{align}
It thus follows that $\HN \subseteq \ker \widetilde{\partial}^1 \ominus \ran \widetilde{\partial}^0$.

Conversely, if $(f,g) \in \ker \widetilde{\partial}^1 \ominus \ran \widetilde{\partial}^0$, then $\varphi_{\lambda_1} g = \varphi_{\lambda_2}f \in \varphi_{\lambda_1} \HM$. So $g = M_{\varphi_{\lambda_1}}^* M_{\varphi_{\lambda_2}}f$. Using (\ref{inpbtfo}) we conclude that $f \in \ran P_{\lambda_1}$. Thus $\ker \widetilde{\partial}^1 \ominus \ran \widetilde{\partial}^0 \subseteq \HN$, and hence they are the same.
\end{proof}

Since $\ker F_\lambda = \ker\widetilde{F_\lambda} = \{f \in \ran P_{\lambda_1}: M_{\varphi_{\lambda_2}} f \in \varphi_{\lambda_1} \HM\}$, the above lemma implies that $\ker F_\lambda$ is isomorphic to $\ker \widetilde{\partial}^1 \ominus \ran \widetilde{\partial}^0$, and hence $\ker F_\lambda$ is isomorphic to $\ker \partial_{R - \lambda}^1 \ominus \ran \partial_{R - \lambda}^0$. Recall that $\ker F_\lambda^* = \HM \ominus [(z_1 - \lambda_1)\HM + (z_2 - \lambda_2)\HM]$. It follows that if $F_\lambda$ is left semi-Fredholm, then
\begin{align}\label{indbfoaf}
\ind F_\lambda &= \dim \ker F_\lambda - \dim \ker F_\lambda^*\notag\\
& = \dim (\ker \partial_{R - \lambda}^1 \ominus \ran \partial_{R - \lambda}^0) - \ind_\lambda \HM.
\end{align}
Thus $F_\lambda$ is Fredholm if and only if $R- \lambda$ is Fredholm, in which case the above equation implies
\begin{align}\label{rlbindofr}
\ind F_\lambda = -\ind (R - \lambda).
\end{align}

Next we look at the Koszul complex of $S = (S_{z_1}, S_{z_2})$, where $S_{z_i} = P_{\HM^\perp}M_{z_i}|\HM^\perp, i = 1, 2$. The Koszul complex of $S$ is defined similarly by
$$K(S): 0 \xrightarrow{\partial_S^{-1}} \HM^\perp \xrightarrow{\partial_S^{0}} \HM^\perp\oplus\HM^\perp \xrightarrow{\partial_S^{1}} \HM^\perp \xrightarrow{\partial_S^{2}} 0.$$
The pair $S$ is a Fredholm pair if the vector spaces $\ker \partial_S^{i}/\ran \partial_S^{i-1}$ are finite dimensional. If $S$ is a Fredholm pair, then the index of $S$ is
\begin{align}\label{indos}
\ind S &= \sum_{i=0}^2 (-1)^i \dim (\ker \partial_S^{i}/\ran \partial_S^{i-1}) \notag\\
& = \dim \ker \partial_S^0 - \dim (\ker \partial_S^{1}/\ran \partial_S^{0}) + \dim (\ran \partial_S^{1})^\perp.
\end{align}
For earlier work on the index of $(S_{z_1}, S_{z_2})$ we refer readers to \cite{Ya06, LYY11} and the references therein. Observe that
\begin{align*}
\ker \partial_{S-\lambda}^{0} &= \{f \in \HM^\perp: S_{z_1 - \lambda_1} f = S_{z_2 - \lambda_2} f = 0\} \\
& = \ker S_{\varphi_{\lambda_1}} \cap \ker S_{\varphi_{\lambda_1}}.
\end{align*}
We show in the following that $\ker F_\lambda$ is isomorphic to $\ker \partial_{S-\lambda}^{0}$.
\begin{lemma}\label{isobkfaks}
$\ker \widetilde{F_\lambda} = M_{\varphi_{\lambda_1}} \ker \partial_{S-\lambda}^{0}$.
\end{lemma}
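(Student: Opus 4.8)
The plan is to make both sides of the claimed identity completely explicit as subsets of $H^2(\D^2)$ and then verify the two inclusions by hand. Recall first that $(z_1-\lambda_1)\HM=\varphi_{\lambda_1}\HM$ (because $1-\overline{\lambda_1}z_1$ and its inverse are multipliers leaving $\HM$ invariant), so that $\ran P_{\lambda_1}=\HM\ominus\varphi_{\lambda_1}\HM$ and, as already recorded, $\ker\widetilde{F_\lambda}=\{h\in\ran P_{\lambda_1}:M_{\varphi_{\lambda_2}}h\in\varphi_{\lambda_1}\HM\}$; similarly $\ker\partial_{S-\lambda}^{0}=\ker S_{\varphi_{\lambda_1}}\cap\ker S_{\varphi_{\lambda_2}}=\{f\in\HMperp:\varphi_{\lambda_1}f\in\HM\text{ and }\varphi_{\lambda_2}f\in\HM\}$. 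The only structural input I will use is that each $M_{\varphi_{\lambda_i}}$ is an isometry on $H^2(\D^2)$ with range $\varphi_{\lambda_i}H^2(\D^2)$.

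For the inclusion $M_{\varphi_{\lambda_1}}\ker\partial_{S-\lambda}^{0}\subseteq\ker\widetilde{F_\lambda}$, I would take $f\in\ker\partial_{S-\lambda}^{0}$ and put $h=\varphi_{\lambda_1}f$. Then $h\in\HM$ by definition of $\ker\partial_{S-\lambda}^{0}$, and for every $g\in\HM$ one has $\la h,\varphi_{\lambda_1}g\ra=\la\varphi_{\lambda_1}f,\varphi_{\lambda_1}g\ra=\la f,g\ra=0$ since $f\perp\HM$; hence $h\in\HM\ominus\varphi_{\lambda_1}\HM=\ran P_{\lambda_1}$. Finally $M_{\varphi_{\lambda_2}}h=\varphi_{\lambda_1}(\varphi_{\lambda_2}f)\in\varphi_{\lambda_1}\HM$ because $\varphi_{\lambda_2}f\in\HM$, so $h\in\ker\widetilde{F_\lambda}$.

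The reverse inclusion is the substantive direction. Let $h\in\ker\widetilde{F_\lambda}$. The key step is to show $h\in\varphi_{\lambda_1}H^2(\D^2)$, i.e. that $\varphi_{\lambda_1}$ divides $h$. Since $M_{\varphi_{\lambda_2}}h\in\varphi_{\lambda_1}\HM\subseteq\varphi_{\lambda_1}H^2(\D^2)$, the function $\varphi_{\lambda_2}h$ vanishes on the slice $\{z_1=\lambda_1\}$; because $\varphi_{\lambda_2}(z_2)$ is not identically zero, this forces $h(\lambda_1,\cdot)\equiv0$, and a function in $H^2(\D^2)$ vanishing on $\{z_1=\lambda_1\}$ lies in $\varphi_{\lambda_1}H^2(\D^2)$ (this uses $H^2(z_1)=\varphi_{\lambda_1}H^2(z_1)\oplus\C(1-\overline{\lambda_1}z_1)^{-1}$). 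So one may write $h=\varphi_{\lambda_1}f$ with $f\in H^2(\D^2)$ uniquely determined by injectivity of $M_{\varphi_{\lambda_1}}$. Now $\varphi_{\lambda_1}f=h\in\HM$; from $\varphi_{\lambda_1}(\varphi_{\lambda_2}f)=M_{\varphi_{\lambda_2}}h\in\varphi_{\lambda_1}\HM$ and injectivity of $M_{\varphi_{\lambda_1}}$ one gets $\varphi_{\lambda_2}f\in\HM$; and for $g\in\HM$, $\la f,g\ra=\la\varphi_{\lambda_1}f,\varphi_{\lambda_1}g\ra=\la h,\varphi_{\lambda_1}g\ra=0$ since $h\perp\varphi_{\lambda_1}\HM$, so $f\in\HMperp$. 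Hence $f\in\ker\partial_{S-\lambda}^{0}$ and $h=M_{\varphi_{\lambda_1}}f\in M_{\varphi_{\lambda_1}}\ker\partial_{S-\lambda}^{0}$, which finishes the argument.

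I expect the only real obstacle to be the divisibility step $h\in\varphi_{\lambda_1}H^2(\D^2)$; everything else is routine manipulation with the isometries $M_{\varphi_{\lambda_i}}$. If one prefers to avoid the slice/reproducing-kernel formulation, an equivalent route is to choose $w\in\HM$ with $\varphi_{\lambda_2}h=\varphi_{\lambda_1}w$ and observe that on the torus $\overline{\varphi_{\lambda_1}}\,h=\overline{\varphi_{\lambda_2}}\,w$; the left-hand side has only nonnegative Fourier modes in $z_2$ while the right-hand side has only nonnegative Fourier modes in $z_1$, so this common element of $L^2(\T^2)$ already belongs to $H^2(\D^2)$ and serves as the desired $f$.
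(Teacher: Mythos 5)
Your proposal is correct and follows essentially the same route as the paper's own proof: the forward inclusion is verified exactly as in the paper using that $M_{\varphi_{\lambda_1}}$ is an isometry, and the reverse inclusion rests on the same key observation that $\varphi_{\lambda_2}h\in\varphi_{\lambda_1}\HM$ forces $h(\lambda_1,\cdot)\equiv 0$, hence $h/\varphi_{\lambda_1}\in H^2(\D^2)$, after which one checks $h/\varphi_{\lambda_1}\in\HMperp$ and lies in $\ker S_{\varphi_{\lambda_1}}\cap\ker S_{\varphi_{\lambda_2}}$. You merely supply a few details (the isometry computation showing $h/\varphi_{\lambda_1}\perp\HM$) that the paper leaves implicit.
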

\begin{proof}
Let $f \in \ker \partial_{S-\lambda}^{0}$. Then $\varphi_{\lambda_i} f \in \HM, i =1, 2$ and $\varphi_{\lambda_1} f \perp \varphi_{\lambda_1} \HM$. Hence $\varphi_{\lambda_1} f \in \ran P_{\lambda_1}$. Note that $\varphi_{\lambda_2} f \in \HM$ implies that $\varphi_{\lambda_2}  \varphi_{\lambda_1} f \in \varphi_{\lambda_1} \HM$. We thus conclude that $\varphi_{\lambda_1} f \in \ker \widetilde{F_\lambda}$, and so $M_{\varphi_{\lambda_1}} \ker \partial_{S-\lambda}^{0} \subseteq \ker \widetilde{F_\lambda}$.

For containment in the other direction, if $f \in \ker \widetilde{F_\lambda}$, then $\varphi_{\lambda_2} f \in \varphi_{\lambda_1} \HM$. This implies $f(\lambda_1, \cdot) = 0$ and $\frac{f}{\varphi_{\lambda_1}} \in \HM^\perp$. From $f \in \ran P_{\lambda_1}$ and $\varphi_{\lambda_2} \frac{f}{\varphi_{\lambda_1}} \in \HM$, we obtain$S_{\varphi_{\lambda_1}}\frac{f}{\varphi_{\lambda_1}} = S_{\varphi_{\lambda_2}}\frac{f}{\varphi_{\lambda_1}} = 0$. Therefore $\frac{f}{\varphi_{\lambda_1}} \in \ker \partial_{S-\lambda}^{0}$, and $\ker \widetilde{F_\lambda} \subseteq M_{\varphi_{\lambda_1}} \ker \partial_{S-\lambda}^{0}$. So $\ker \widetilde{F_\lambda} = M_{\varphi_{\lambda_1}} \ker \partial_{S-\lambda}^{0}$.
\end{proof}
Recall that $\ker F_\lambda$ is isomorphic to $\ker \partial_{R - \lambda}^1 \ominus \ran \partial_{R - \lambda}^0$, we thus obtain the following lemma.
\begin{lemma}\label{isosps}
Let $\HM \in Lat(H^2(\D^2))$. Then the spaces $\ker F_\lambda, \ker \partial_{R - \lambda}^1 \ominus \ran \partial_{R - \lambda}^0$ and $\ker \partial_{S-\lambda}^{0}$ are isomorphic for each $\lambda \in \D^2$.
\end{lemma}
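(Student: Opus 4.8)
The statement is an immediate consequence of the two preceding lemmas together with the change of variables $U$ introduced just before Lemma~\ref{rlbfadots}, so the plan is simply to chain these identifications together. First I would record the isomorphism between $\ker F_\lambda$ and $\ker \partial_{R-\lambda}^1 \ominus \ran \partial_{R-\lambda}^0$. By the discussion around $U$, the map $(f,g)\mapsto((1-\overline{\lambda_2}z_2)f,(1-\overline{\lambda_1}z_1)g)$ carries $\ker\partial_{R-\lambda}^1$ bijectively onto $\ker\widetilde\partial^1$ and $\ran\partial_{R-\lambda}^0$ onto $\ran\widetilde\partial^0$, hence induces a linear isomorphism $\ker\partial_{R-\lambda}^1\ominus\ran\partial_{R-\lambda}^0\cong\ker\widetilde\partial^1\ominus\ran\widetilde\partial^0$. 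Lemma~\ref{rlbfadots} exhibits $\ker\widetilde\partial^1\ominus\ran\widetilde\partial^0$ as the graph of the map $f\mapsto M_{\varphi_{\lambda_1}}^* M_{\varphi_{\lambda_2}}f$ over exactly the set $\{f\in\ran P_{\lambda_1}:M_{\varphi_{\lambda_2}}f\in\varphi_{\lambda_1}\HM\}=\ker\widetilde F_\lambda=\ker F_\lambda$, so the first-coordinate projection is a linear isomorphism of $\ker\widetilde\partial^1\ominus\ran\widetilde\partial^0$ onto $\ker F_\lambda$, with inverse $f\mapsto(f,M_{\varphi_{\lambda_1}}^*M_{\varphi_{\lambda_2}}f)$. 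This gives the first isomorphism.

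For the remaining isomorphism I would invoke Lemma~\ref{isobkfaks}, which gives $\ker\widetilde F_\lambda=M_{\varphi_{\lambda_1}}\ker\partial_{S-\lambda}^0$. Since $\lambda_1\in\D$, the function $\varphi_{\lambda_1}$ is inner and depends on $z_1$ only, so $M_{\varphi_{\lambda_1}}$ is an isometry of $H^2(\D^2)=H^2(z_1)\otimes H^2(z_2)$ into itself, and in particular injective; hence it restricts to a linear isomorphism of $\ker\partial_{S-\lambda}^0$ onto $M_{\varphi_{\lambda_1}}\ker\partial_{S-\lambda}^0=\ker\widetilde F_\lambda=\ker F_\lambda$. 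Composing with the first isomorphism shows that the three spaces $\ker F_\lambda$, $\ker\partial_{R-\lambda}^1\ominus\ran\partial_{R-\lambda}^0$ and $\ker\partial_{S-\lambda}^0$ are mutually isomorphic, as asserted.

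There is no genuine obstacle here: every ingredient is already assembled in the excerpt, and the argument is purely bookkeeping. The only small points to keep track of are the identity $\ker F_\lambda=\ker\widetilde F_\lambda$ (recorded when $\widetilde F_\lambda$ was defined) and the fact that we assert only \emph{linear} isomorphism, not unitary equivalence: the map $U$ and the first-coordinate projection are not isometries, but this is harmless for the intended applications, which use only the equality of the dimensions (finite or infinite) of these cohomology-type spaces. If a unitary identification were ever needed one could compose with the partial isometries coming from polar decompositions, but that is not required for the index computations that follow.
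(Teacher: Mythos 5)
Your proposal is correct and follows exactly the paper's route: the paper also obtains this lemma by chaining the isomorphism $\ker F_\lambda \cong \ker\widetilde\partial^1\ominus\ran\widetilde\partial^0 \cong \ker\partial_{R-\lambda}^1\ominus\ran\partial_{R-\lambda}^0$ (via the map $U$ and Lemma~\ref{rlbfadots}) with the identity $\ker\widetilde F_\lambda = M_{\varphi_{\lambda_1}}\ker\partial_{S-\lambda}^0$ from Lemma~\ref{isobkfaks}. Your added remarks on injectivity of $M_{\varphi_{\lambda_1}}$ and on linear versus unitary identification are accurate fillings-in of details the paper leaves implicit.
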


\section{Hilbert-Schmidtness}
In this section, we study the Hilbert-Schmidtness of submodules containing some particular functions and prove our main theorem.
\subsection{Submodules containing $z_1 - \varphi(z_2)$} In this subsection, we consider the submodules which contain $z_1 - \varphi(z_2)$, where $\varphi$ is an inner function. Let $\varphi$ be an inner function and $M_\varphi = [z_1 - \varphi(z_2)]$ be the submodule generated by $z_1 - \varphi(z_2)$. The submodule $M_\varphi$ was studied by the second and the third author in \cite{IY08}. Let $\{\lambda_k(z_2)\}$ be an orthonormal basis of $K_\varphi(z_2) = H^2(z_2) \ominus \varphi H^2(z_2)$,
$$e_j = \frac{z_2^j + z_2^{j-1}z_1 + \cdots + z_1^j}{\sqrt{j+1}}, j \geq 0$$
and $E_{k,j} = \lambda_k(z_2) e_j(z_1, \varphi(z_2))$. Let $S_{z_1}^\varphi = P_{M_\varphi^\perp}M_{z_1}|M_\varphi^\perp$ and define the operator
$$V: H^2(\D^2) \ominus M_\varphi \rightarrow K_\varphi(z_2) \otimes L^2_a(\D)$$
by $V(E_{k,j}) = \lambda_k(z_2) \sqrt{j+1}z^j$. It is shown in \cite{IY08} that $\{E_{k,j}\}$ is an orthonormal basis of $H^2(\D^2) \ominus M_\varphi$, $V$ is a unitary operator and
$$V S_{z_1}^\varphi = (I\otimes M_z) V,$$
i.e., $S_{z_1}^\varphi$ is unitarily equivalent to $I\otimes M_z$. It is clear that $I\otimes M_z$ is a Fredholm operator on $K_\varphi(z_2) \otimes L^2_a(\D)$ if and only if $K_\varphi(z_2)$ is finite dimensional, or equivalently, if and only if
 $\varphi$ is a finite Blaschke product. Now we take a look at a submodule $\HM$ which contains $z_1 - \varphi(z_2)$ (but not necessarily generated by it) and study its Hilbert-Schmidtness under the assumption that $\varphi$ is a finite Blaschke product. Observe that in this case there exists a closed subspace $\HM_1 \subseteq H^2(\D^2) \ominus M_\varphi$ such that $\HM = \HM_1 \oplus M_\varphi$. We extend $V$ to be zero on $M_\varphi$ and denote the new operator also by $V$, then $V^*: K_\varphi(z_2) \otimes L^2_a(\D) \rightarrow H^2(\D^2)$ is an isometry with range $H^2(\D^2) \ominus M_\varphi$ and $V: H^2(\D^2) \rightarrow K_\varphi(z_2) \otimes L^2_a(\D)$ is a partial isometry. Let $\HN = V\HM = V \HM_1$. Then clearly $\HN$ is invariant under $I\otimes M_z$. Define $S_{z_i} = P_{\HM^\perp} M_{z_i} |_{\HM^\perp},  i = 1, 2$, and $S_\HN = P_{\HN^\perp} (I \otimes M_z) |_{\HN^\perp}$. We will see that $S_{z_1}$ is unitarily equivalent to $S_\HN$. Since it is well-known that submodules $\HM$ with $\dim \HM^\perp < \infty$ are Hilbert-Schmidt, we assume in the sequal that $\dim \HM^\perp = \infty$.
\begin{lemma}\label{clrfsnczp}
Let $\HM \in Lat(H^2(\D^2))$ contain $z_1 - \varphi(z_2)$ and $\HN = V\HM$, where $\varphi$ is a finite Blaschke product. Then for every $\lambda\in {\mathbb D}$ the operator $S_\HN-\lambda$ has closed range.
\end{lemma}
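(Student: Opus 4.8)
The goal is to show that $S_\HN - \lambda$ has closed range for every $\lambda \in \D$. The plan is to reduce this to a statement about the fringe operator of $\HM$ at the point $(\lambda, 0)$, using the machinery already set up in Section 2. Recall from Lemma \ref{isosps} that $\ker F_{(\lambda,0)}$, $\ker \partial_{S}^0$ at $(\lambda,0)$, and $\ker \partial_R^1 \ominus \ran \partial_R^0$ at $(\lambda,0)$ are all isomorphic; moreover, since $z_1 - \varphi(z_2) \in \HM$ and $\varphi$ is a \emph{finite} Blaschke product, the unitary equivalence $V S_{z_1}^\varphi = (I \otimes M_z)V$ and the direct sum decomposition $\HM = \HM_1 \oplus M_\varphi$ let us identify $S_{z_1}$ acting on $\HM^\perp$ with $S_\HN$ acting on $\HN^\perp$ inside $K_\varphi(z_2) \otimes L^2_a(\D)$. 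So it suffices to prove that $S_{z_1} - \lambda$ has closed range on $\HM^\perp$, equivalently that $\ran(S_{\varphi_\lambda})$ is closed where $S_{\varphi_\lambda} = P_{\HM^\perp} M_{\varphi_\lambda}|_{\HM^\perp}$.

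First I would observe that because $\varphi$ is a finite Blaschke product, $K_\varphi(z_2)$ is finite-dimensional, so $H^2(\D^2) \ominus M_\varphi \cong K_\varphi(z_2) \otimes L^2_a(\D)$ is a finite direct sum of copies of the Bergman space and $I \otimes M_z$ is (up to finite multiplicity) the Bergman shift, which is a Fredholm operator of index $-\dim K_\varphi(z_2)$. Since $\HM^\perp \subseteq H^2(\D^2) \ominus M_\varphi$ is a co-invariant subspace for this operator, $S_{z_1} = S_\HN$ is the compression of a Fredholm-type operator to a co-invariant subspace. The key point is that $[R^*_{z_1}, R_{z_2}]$ is compact here: indeed $[R^*_{z_1}, R_{z_2}]$ has finite rank on $M_\varphi$ (a direct computation on the generator, as in \cite{IY08}), and correspondingly it is compact on all of $\HM$. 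By the remark following Proposition \ref{rlspoaci}, this makes $F_{(\lambda,0)} = F_{(0,0)} - \lambda$ (and hence $F_{(\lambda, 0)}$ for each $\lambda$) \emph{left semi-Fredholm}; in particular $F_{(\lambda,0)}$, and therefore $\widetilde{F_{(\lambda,0)}}$, has closed range.

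Next I would translate "closed range of the fringe operator" into "closed range of $S_\HN - \lambda$." By Lemma \ref{isobkfaks}, $\ker \widetilde{F_{(\lambda,0)}} = M_{\varphi_\lambda} \ker \partial_{S}^0$ at $(\lambda,0)$, so the kernels match up, and the identity in Proposition \ref{rlspoaci}(ii), namely $f - \widetilde{F_\lambda}\widetilde{F_\lambda}^* f = [R^*_{\varphi_{\lambda_1}}, R_{\varphi_{\lambda_1}}][R^*_{\varphi_{\lambda_2}}, R_{\varphi_{\lambda_2}}] f$, together with the analogous formula for $\widetilde G_\lambda$ in Proposition \ref{srsfgl}, exhibits $I - \widetilde{F_\lambda}\widetilde{F_\lambda}^*$ and $I - \widetilde{F_\lambda}^*\widetilde{F_\lambda}$ as products of defect operators of $(R_{\varphi_{\lambda_1}}, R_{\varphi_{\lambda_2}})$. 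On $M_\varphi$ these defect operators are finite rank, so $\widetilde{F_{(\lambda,0)}}$ is actually a finite-rank perturbation of a (co)isometry on the bulk of $\HM$; this forces $\widetilde{F_{(\lambda,0)}}$, and hence $F_{(\lambda,0)}$, to be \emph{Fredholm}, not merely left semi-Fredholm. Then by \eqref{rlbindofr}, $R - (\lambda,0)$ is Fredholm on $\HM$, and by Lemma \ref{isosps} the pair $S - (\lambda,0)$ on $\HM^\perp$ has finite-dimensional cohomology; in particular $\ran \partial_S^0$ at $(\lambda,0)$ is closed, which says precisely that $\ker(S_{\varphi_\lambda}) \cap \ker(S_{\varphi_0})$ is suitably controlled and, combined with semi-Fredholmness of the individual $S_{\varphi_\lambda} = S_\HN - \lambda$ read off from the $I \otimes M_z$ model, yields that $S_\HN - \lambda$ has closed range.

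The main obstacle I anticipate is the last reduction: going from "$F_{(\lambda,0)}$ is Fredholm" or "$S - (\lambda, 0)$ is a Fredholm pair" back to "the \emph{single} operator $S_\HN - \lambda$ has closed range" is not purely formal, since Fredholmness of the Koszul complex of the pair does not automatically give closed range of one component. I expect to need the special structure here: $\HN$ is an $(I\otimes M_z)$-invariant subspace of $K_\varphi(z_2) \otimes L^2_a(\D)$, and $I \otimes M_z$ acts on a finite direct sum of Bergman spaces, so $S_\HN - \lambda$ is the restriction of a nicely behaved operator. One should argue that $\ran(S_\HN - \lambda)$ being closed is equivalent to a lower bound $\|(S_\HN-\lambda) g\| \geq c \|g\|$ for $g$ orthogonal to $\ker(S_\HN - \lambda)$, and establish that bound by combining the semi-Fredholm property of $F_{(\lambda,0)}$ (giving a bound away from its finite-dimensional kernel) with the kernel identification of Lemma \ref{isobkfaks}. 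This estimate, rather than any abstract index count, is where the real work lies; the earlier Koszul-complex bookkeeping mainly serves to guarantee that all the relevant kernels and cokernels are finite-dimensional so the estimate can be localized.
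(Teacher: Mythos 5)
There is a genuine gap, and in fact the central premise of your argument is false for part of the class of submodules this lemma must cover. You assert that $[R_{z_1}^*,R_{z_2}]$ is compact because it has finite rank on $M_\varphi$ and ``correspondingly it is compact on all of $\HM$.'' The commutator on $\HM$ involves the projection onto $\HM$, not onto $M_\varphi$, so compactness on the smaller submodule transfers to nothing; worse, by Theorem \ref{hlsmnmcdv2} compactness of $[R_{z_1}^*,R_{z_2}]$ for a submodule containing $z_1-z_2$ is \emph{equivalent} to $\ind_{(0,0)}\HM<\infty$, i.e.\ to $\HM$ being Hilbert--Schmidt. Lemma \ref{clrfsnczp} is needed precisely also when $\dim(\HN\ominus(I\otimes M_z)\HN)=\infty$ (see Lemma \ref{espfszozp}(i)) --- take $\HN\subseteq L^2_a(\D)$ of infinite index and $\HM=V^*\HN+[z_1-z_2]$ --- and for such $\HM$ the commutator is \emph{not} compact. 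So your route is circular where it works and breaks down exactly where the lemma is most needed. Two further problems: (a) you claim $\widetilde{F_{(\lambda,0)}}$ is a finite-rank perturbation of a (co)isometry and hence Fredholm, but by Proposition \ref{rlspoaci}(ii) the defect $I-\widetilde{F_\lambda}\widetilde{F_\lambda}^*$ is the product $[R_{\varphi_{\lambda_1}}^*,R_{\varphi_{\lambda_1}}][R_{\varphi_{\lambda_2}}^*,R_{\varphi_{\lambda_2}}]$ of the two defect projections, which is exactly the operator that fails to be compact in the known non-Hilbert--Schmidt examples; (b) you yourself flag that passing from Fredholmness of the Koszul pair $S-(\lambda,0)$ to closed range of the single operator $S_\HN-\lambda$ is ``not purely formal,'' and you never supply the missing estimate --- closedness of $\ran\partial^0_{S-(\lambda,0)}$ concerns $f\mapsto((S_{z_1}-\lambda)f,\,S_{z_2}f)$ and does not yield closedness of $\ran(S_{z_1}-\lambda)$.

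The paper's proof bypasses all of this machinery. It passes to the adjoint: since $\HN^\perp$ is invariant under $(I\otimes M_z)^*$, one has $S_\HN^*=(I\otimes M_z^*)|_{\HN^\perp}$, and $\ker(I\otimes M_z^*)=K_\varphi(z_2)$ is finite dimensional because $\varphi$ is a finite Blaschke product. Hence $\HN^\perp+K_\varphi(z_2)$ is closed, $S_\HN^*\HN^\perp=(I\otimes M_z^*)\bigl[(\HN^\perp+K_\varphi(z_2))\ominus K_\varphi(z_2)\bigr]$, and $I\otimes M_z^*$ is bounded below on that closed subspace of $\ker(I\otimes M_z^*)^\perp$, so the range is closed. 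No fringe operators, no compactness of commutators, and no restriction on $\HM$ beyond the hypothesis. If you want to salvage your outline, the only ingredient you actually need from it is the finite dimensionality of $K_\varphi(z_2)$; everything routed through the fringe operator should be discarded.
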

\begin{proof}
It is equivalent to show that $ran (S_\HN^*-\overline{\lambda})$ is closed. We only verify the case for $\lambda=0$ since the general case is similar. It is clear that $I\otimes M_z^*: K_\varphi(z_2) \otimes L^2_a(\D) \rightarrow K_\varphi(z_2) \otimes L^2_a(\D)$ has closed range and $\ker (I\otimes M_z^*) = K_\varphi(z_2)$. Observe that $S_\HN^* = (I\otimes M_z^*) |_{\HN^\perp}$. We then have
$$S_\HN^* \HN^\perp = (I\otimes M_z^*) (\HN^\perp + K_\varphi(z_2)) = (I\otimes M_z^*) [(\HN^\perp + K_\varphi(z_2)) \ominus K_\varphi(z_2)].$$
Since $\varphi$ is a finite Blaschke product, we have $\dim K_\varphi(z_2) < \infty$. Thus $\HN^\perp + K_\varphi(z_2)$ is closed, and so $S_\HN^* \HN^\perp$ is closed. The proof is complete.
\end{proof}
Note that $\ker S_\HN^* = K_\varphi(z_2) \cap \HN^\perp$, we conclude from the above lemma that $S_\HN$ is a semi-Fredholm operator.
\begin{lemma}\label{sFosnzcp}
Let $\HM \in Lat(H^2(\D^2))$ contain $z_1 - \varphi(z_2)$ and $\HN = V\HM$, where $\varphi$ is a finite Blaschke product. Then for every $\lambda \in \D$, the operator $S_\HN - \lambda$ is semi-Fredholm with \[\ind (S_\HN - \lambda) = \dim (\HN \ominus (I\otimes M_z)\HN) - \dim K_\varphi.\]
\end{lemma}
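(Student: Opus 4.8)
The plan is to first establish that $S_\HN-\lambda$ is semi-Fredholm, and then to compute its index by comparing $S_\HN$ with the Fredholm operator $A:=I\otimes M_z$ on $\HK:=K_\varphi\otimes L^2_a(\D)$ through the block upper-triangular decomposition of $A$ along the $A$-invariant subspace $\HN$. Throughout, $\HN^\perp$ is the orthocomplement of $\HN$ in $\HK$ and I write $A-\lambda$ for $A-\lambda I=I\otimes(M_z-\lambda)$.

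For the semi-Fredholm property: since $\HN$ is $A$-invariant, $\HN^\perp$ is $A^*$-invariant, so $(S_\HN-\lambda)^*=(A^*-\overline\lambda)|_{\HN^\perp}$ and hence $\ker(S_\HN-\lambda)^*=\HN^\perp\cap\ker(A^*-\overline\lambda)$. Because $A^*=I\otimes M_z^*$ with $M_z$ the Bergman shift, $\ker(A^*-\overline\lambda)=K_\varphi\otimes\ker(M_z^*-\overline\lambda)$, and $\ker(M_z^*-\overline\lambda)$ is one-dimensional, spanned by the Bergman reproducing kernel $(1-\overline\lambda z)^{-2}$. Thus $\dim\ker(S_\HN-\lambda)^*\le\dim K_\varphi<\infty$ --- this is where finiteness of $\varphi$ is used --- and, combined with Lemma~\ref{clrfsnczp} (closed range), this shows $S_\HN-\lambda$ is semi-Fredholm. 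The same facts make $A-\lambda$ Fredholm on $\HK$: $M_z-\lambda$ is bounded below on $L^2_a(\D)$ with one-dimensional cokernel, so $A-\lambda$ is bounded below with $\dim(\HK\ominus(A-\lambda)\HK)=\dim K_\varphi$, i.e. $\ind(A-\lambda)=-\dim K_\varphi$; in particular $A$ itself is bounded below.

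For the index: since the index is locally constant on the open set of semi-Fredholm operators and $\D$ is connected, $\lambda\mapsto\ind(S_\HN-\lambda)$ is constant on $\D$, equal to $\ind S_\HN$, so it suffices to treat $\lambda=0$. As $A$ is bounded below, $R_\HN:=A|_\HN$ is injective with closed range $A\HN$, whence $\ker R_\HN=0$ and $\mathrm{coker}\,R_\HN=\HN\ominus A\HN$. With respect to $\HK=\HN\oplus\HN^\perp$ the operator $A$ is block upper-triangular with diagonal $(R_\HN,S_\HN)$; equivalently the two-term cochain complexes fit into the short exact sequence
\[
0\to\bigl(\HN\xrightarrow{R_\HN}\HN\bigr)\to\bigl(\HK\xrightarrow{A}\HK\bigr)\to\bigl(\HN^\perp\xrightarrow{S_\HN}\HN^\perp\bigr)\to0 .
\]
The induced long exact cohomology sequence, together with $\ker R_\HN=\ker A=0$, collapses to the exact sequence
\[
0\to\ker S_\HN\to\HN\ominus A\HN\to\HK\ominus A\HK\to\mathrm{coker}\,S_\HN\to0 ,
\]
in which $\dim(\HK\ominus A\HK)=\dim K_\varphi$. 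A dimension count in $\N\cup\{\infty\}$ completes the proof: if $\dim(\HN\ominus A\HN)=\infty$, then since $\dim\mathrm{coker}\,S_\HN\le\dim K_\varphi<\infty$ the injection $\ker S_\HN\hookrightarrow\HN\ominus A\HN$ forces $\dim\ker S_\HN=\infty$, so both $\ind S_\HN$ and $\dim(\HN\ominus A\HN)-\dim K_\varphi$ equal $+\infty$; if $\dim(\HN\ominus A\HN)<\infty$, the alternating sum of dimensions in the exact sequence vanishes, giving $\ind S_\HN=\dim\ker S_\HN-\dim\mathrm{coker}\,S_\HN=\dim(\HN\ominus A\HN)-\dim K_\varphi$.

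The main point requiring care is this last step: at this stage of the paper one cannot assume $\HN\ominus A\HN$ (the wandering subspace of the restricted shift) is finite-dimensional, so $S_\HN-\lambda$ may be genuinely semi-Fredholm with index $+\infty$ rather than Fredholm, and the exact-sequence bookkeeping must be arranged to remain valid in that case. A secondary point is that closedness of the range of $S_\HN-\lambda$ is needed for every $\lambda\in\D$, not just $\lambda=0$, in order to run the local-constancy argument --- that is exactly what Lemma~\ref{clrfsnczp} supplies. The remaining ingredients, that $M_z$ and $M_z-\lambda$ ($\lambda\in\D$) are bounded below on $L^2_a(\D)$ and that $\ker(M_z^*-\overline\lambda)$ is one-dimensional, are standard facts about the Bergman shift.
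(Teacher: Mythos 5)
Your proof is correct, and it rests on the same structural fact as the paper's: the block upper-triangular form of $A=I\otimes M_z$ with respect to $\HK=\HN\oplus\HN^\perp$, with diagonal entries $A|_\HN$ and $S_\HN$, together with Lemma~\ref{clrfsnczp} for closed ranges and the identification $\ker(S_\HN-\lambda)^*=\HN^\perp\cap\ker(A^*-\overline\lambda)$ for the finite-dimensionality of the cokernel. Where you diverge is in the bookkeeping of the index. The paper factors $A-\lambda$ as a product of three operators (two diagonal blocks and an invertible upper-triangular factor), invokes additivity of the index under products to get $-\dim K_\varphi=\ind(S_\HN-\lambda)+\ind(A|_\HN-\lambda)$, applies local constancy of the index to $A|_\HN-\lambda$, and then handles the case $\dim(\HN\ominus A\HN)=\infty$ separately by a Calkin-algebra argument. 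You instead apply local constancy directly to $S_\HN-\lambda$ to reduce to $\lambda=0$ and then run the long exact cohomology sequence of the short exact sequence of two-term complexes, which treats the finite and infinite cases by one uniform dimension count; this is arguably cleaner, and it also avoids the small sign slip in the paper's displayed formula $\ind(A|_\HN-\lambda)=\dim(\HN\ominus A\HN)$ (the restricted shift is injective, so its index is $-\dim(\HN\ominus A\HN)$; the paper's final formula is nevertheless correct). One point of precision in your infinite-dimensional case: the injectivity of $\ker S_\HN\to\HN\ominus A\HN$ alone does not force $\dim\ker S_\HN=\infty$; what does is exactness at the middle term together with $\dim(\HK\ominus A\HK)=\dim K_\varphi<\infty$, which shows $\ker S_\HN$ is isomorphic to a finite-codimensional subspace of $\HN\ominus A\HN$. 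Since that is exactly what your exact sequence provides, this is a matter of wording rather than a gap.
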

\begin{proof}
In view of Lemma 3.1 we only need to consider the index of $S_\HN - \lambda$. To this end we write
\begin{equation*}
I \otimes M_z = \left( \begin{array}{cc}
    (I \otimes M_z)|_{\HN} & A\\

    0 & S_\HN\\

  \end{array}\right)
\end{equation*}
with respect to the decomposition $K_\varphi(z_2) \otimes L^2_a(\D) = \HN \oplus \HN^\perp$. Then for $\lambda \in \D$, we have
\begin{align}\label{madnanp}
I \otimes M_z - \lambda& =
\begin{pmatrix}
  (I \otimes M_z)|_{\HN} - \lambda & A\\
    0 & S_\HN - \lambda\\
 \end{pmatrix}\\
&= \begin{pmatrix}
  I & 0\\
    0 & S_\HN - \lambda\\
 \end{pmatrix}
 \begin{pmatrix}
  I & A\\
    0 & I\\
 \end{pmatrix}
 \begin{pmatrix}
  (I \otimes M_z)|_{\HN} - \lambda & 0\\
    0 & I\\
 \end{pmatrix}.\notag
 \end{align}
 It is clear that $ \begin{pmatrix}
  I & A\\
    0 & I\\
 \end{pmatrix}$ is invertible. Since the Fredholm index of a product equals the sum of the indices, we obtain
\begin{align*}
-\dim K_\varphi &= \ind (I \otimes M_z - \lambda) = \ind (S_\HN - \lambda) + \ind ((I \otimes M_z)|_{\HN} - \lambda).
\end{align*}
Since $(I \otimes M_z)|_{\HN} - \lambda$ is known to be semi-Fredholm for every $\lambda\in {\mathbb D}$ and ${\mathbb D}$ is path connected, we have
\[ \ind ((I \otimes M_z)|_{\HN} - \lambda)=\ind ((I \otimes M_z)|_{\HN})= \dim (\HN \ominus (I\otimes M_z)\HN).\]
Thus we have
\[\ind (S_\HN - \lambda) = \dim (\HN \ominus (I\otimes M_z)\HN) - \dim K_\varphi,\]
when all the numbers involved are finite.
 Furthermore, if $\dim (\HN \ominus (I\otimes M_z)\HN) = \infty$, then $(I \otimes M_z)|_{\HN} - \lambda$ is not a Fredholm operator, so in the Calkin algebra its image is not invertible. Hence (\ref{madnanp}) implies $S_\HN - \lambda$ is not a Fredholm operator, i.e., $\ind (S_\HN - \lambda) = \infty$. The proof is complete.
\end{proof}
Recall that $S_{z_i} = P_{\HM^\perp} M_{z_i} |\HM^\perp, i = 1, 2$. Now we determine the essential spectrum for $S_{z_1}$.
\begin{lemma}\label{espfszozp}
Let $\HM \in Lat(H^2(\D^2))$ contain $z_1 - \varphi(z_2)$ and $\HN = V\HM$, where $\varphi$ is a finite Blaschke product.\\
(i) If $\dim (\HN \ominus (I\otimes M_z)\HN) = \infty$, then $\sigma_e(S_{z_1}) = \overline{\D}$.\\
(ii) If $\dim (\HN \ominus (I\otimes M_z)\HN) < \infty$, then $\sigma_e(S_{z_1}) \subseteq \T$.
\end{lemma}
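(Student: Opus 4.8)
The plan is to transfer the problem to the operator $S_\HN$ and then read off the essential spectrum directly from Lemma~\ref{sFosnzcp}. Since $S_{z_1}$ is unitarily equivalent to $S_\HN$ (as noted at the beginning of this subsection), we have $\sigma_e(S_{z_1})=\sigma_e(S_\HN)$, so it suffices to compute $\sigma_e(S_\HN)$. The key preliminary observation is that $S_\HN$ is a compression of the contraction $I\otimes M_z$ on $K_\varphi(z_2)\otimes L^2_a(\D)$ (the Bergman shift $M_z$ on $L^2_a(\D)$ has norm $1$), hence $\|S_\HN\|\le 1$ and therefore $\sigma_e(S_\HN)\subseteq\sigma(S_\HN)\subseteq\overline{\D}$. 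Thus in both cases everything comes down to deciding, for $\lambda\in\D$, whether $S_\HN-\lambda$ is Fredholm, which is precisely what Lemma~\ref{sFosnzcp} settles.

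In case (i), $\dim(\HN\ominus(I\otimes M_z)\HN)=\infty$, so by Lemma~\ref{sFosnzcp} the index of $S_\HN-\lambda$ is infinite for every $\lambda\in\D$; in particular $S_\HN-\lambda$ is not Fredholm, so $\D\subseteq\sigma_e(S_\HN)$. Since the essential spectrum is closed, $\overline{\D}\subseteq\sigma_e(S_\HN)$, and combining this with the reverse inclusion above gives $\sigma_e(S_{z_1})=\sigma_e(S_\HN)=\overline{\D}$.

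In case (ii), $\dim(\HN\ominus(I\otimes M_z)\HN)<\infty$, so Lemma~\ref{sFosnzcp} gives that for every $\lambda\in\D$ the operator $S_\HN-\lambda$ is semi-Fredholm with finite index $\dim(\HN\ominus(I\otimes M_z)\HN)-\dim K_\varphi$; a semi-Fredholm operator of finite index is automatically Fredholm, hence $\D\cap\sigma_e(S_\HN)=\emptyset$. Together with $\sigma_e(S_\HN)\subseteq\overline{\D}$ this yields $\sigma_e(S_{z_1})=\sigma_e(S_\HN)\subseteq\overline{\D}\setminus\D=\T$.

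There is no serious obstacle here: once Lemma~\ref{sFosnzcp} is available, the statement is essentially a repackaging of it, and the role of the present lemma is mainly to record the dichotomy cleanly. The only points requiring (minor) care are correctly interpreting the conclusion of Lemma~\ref{sFosnzcp} in the two regimes, together with the elementary facts that a contraction has (essential) spectrum inside $\overline{\D}$ and that a semi-Fredholm operator of finite index is Fredholm; both are standard.
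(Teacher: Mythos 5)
Your proposal is correct and follows essentially the same route as the paper: pass to $S_\HN$ via the unitary equivalence $S_{z_1}\cong S_\HN$ and read both cases off Lemma~\ref{sFosnzcp} (infinite index implies non-Fredholm on all of $\D$, finite index plus semi-Fredholm implies Fredholm on all of $\D$). The one caveat is that the paper's proof of this lemma is precisely where that unitary equivalence gets verified (via $V^*(\HN^\perp)=\HM^\perp$ and the intertwining $S_{z_1}V^*|_{\HN^\perp}=V^*|_{\HN^\perp}S_\HN$), so you should supply that short computation rather than cite it as already established.
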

\begin{proof}
Recall that $V^*: K_\varphi(z_2) \otimes L^2_a(\D) \rightarrow H^2(\D^2)$ is an isometry with range $H^2(\D^2) \ominus M_\varphi$. So $VV^* = I$ and $V^*V = P_{\HM_\varphi^\perp}$. Since $\HM = \HM_1 \oplus M_\varphi$ for some $\HM_1 \subseteq H^2(\D^2) \ominus M_\varphi$, and $V^* \HN = \HM_1$, we conclude that $V^* (\HN^\perp) = \HM^\perp$. Recall also that $V S_{z_1}^\varphi = (I\otimes M_z) V$, it then follows that
\begin{align*}
S_{z_1}^\varphi V^* &= P_{\HM_\varphi^\perp}M_{z_1}P_{\HM_\varphi^\perp} V^*\\
&= V^*(V S_{z_1}^\varphi)V^* = V^*[(I\otimes M_z) V]V^*\\
& = V^*(I\otimes M_z).
\end{align*}
Thus
$$S_{z_1}V^*|\HN^\perp = V^*|\HN^\perp P_{\HN^\perp} (I\otimes M_z) |\HN^\perp = V^*|\HN^\perp S_\HN.$$
So $S_{z_1}$ is unitarily equivalent to $S_\HN$. The assertions then follow from Lemma \ref{sFosnzcp}.
\end{proof}

We need the following theorem from \cite{Ya01} to study the Hilbert-Schmidtness of a submodule.
\begin{theorem}[\cite{Ya01}]\label{grshcm}
Let $\HM \subseteq H^2(\D^2)$ be a submodule. If $\D$ is not a subset of $\sigma_e(S_{z_1}) \cap \sigma_e(S_{z_2})$, then $\HM$ is a Hilbert-Schmidt submodule.
\end{theorem}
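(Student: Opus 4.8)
The plan is to reduce the assertion, by means of the unitary equivalence for $C^2$ recalled in the introduction, to a trace‑class estimate for the defect operators of a fringe operator, and then to extract that estimate from the Fredholm hypothesis. First I would record what the hypothesis gives: $\HM$ is Hilbert–Schmidt if and only if $[R_{z_1}^*,R_{z_1}][R_{z_2}^*,R_{z_2}]$ and $[R_{z_1}^*,R_{z_2}]$ are Hilbert–Schmidt (here $[R_{z_i}^*,R_{z_i}]=I-R_{z_i}R_{z_i}^*$ is the projection of $\HM$ onto $\HM\ominus z_i\HM$), equivalently if and only if $\mathrm{tr}\big([R_{z_1}^*,R_{z_1}][R_{z_2}^*,R_{z_2}][R_{z_1}^*,R_{z_1}]\big)<\infty$ and $\mathrm{tr}\big([R_{z_1}^*,R_{z_2}]^*[R_{z_1}^*,R_{z_2}]\big)<\infty$. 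Since $\D\not\subseteq\sigma_e(S_{z_1})\cap\sigma_e(S_{z_2})$, there is $\lambda_0\in\D$ at which, interchanging $z_1$ and $z_2$ if necessary, $S_{z_1}-\lambda_0$ is Fredholm.

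Next I would rotate the first coordinate by $\varphi_{\lambda_0}=\varphi_{\lambda_0}(z_1)$. Because $\HM$ is $M_{z_1}$‑invariant, a vector that leaves $\HM$ under iteration of $M_{z_1}$ or $M_{z_1}^*$ never returns, so $P_\HM M_{z_1}^n|_{\HM^\perp}=S_{z_1}^n$; expanding $\varphi_{\lambda_0}$ in a power series in $z_1$ yields $S_{\varphi_{\lambda_0}}=(S_{z_1}-\lambda_0)(I-\overline{\lambda_0}S_{z_1})^{-1}$, which is therefore Fredholm, and likewise $R_{\varphi_{\lambda_0}}=(R_{z_1}-\lambda_0)(I-\overline{\lambda_0}R_{z_1})^{-1}$. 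Using $\|[(R_{z_1}^*)^{k},R_{z_2}]\|_{HS}\le k\,\|[R_{z_1}^*,R_{z_2}]\|_{HS}$, and that $R_{z_1}$ is recovered from $R_{\varphi_{\lambda_0}}$ by the inverse Möbius map, one checks that Hilbert–Schmidtness of the two operators above is unaffected by replacing $z_1$ with $\varphi_{\lambda_0}$. So it suffices to prove $[R_{\varphi_{\lambda_0}}^*,R_{\varphi_{\lambda_0}}][R_{z_2}^*,R_{z_2}]$ and $[R_{\varphi_{\lambda_0}}^*,R_{z_2}]$ are Hilbert–Schmidt. Applying Proposition \ref{rlspoaci} with $\lambda=(\lambda_0,0)$ (so $\varphi_{\lambda_2}(z_2)=z_2$), and writing $\widetilde F=\widetilde{F_{(\lambda_0,0)}}$ on $\HM\ominus(z_1-\lambda_0)\HM$, one gets
$$\mathrm{tr}\,(I-\widetilde F\,\widetilde F^*)=\big\|[R_{\varphi_{\lambda_0}}^*,R_{\varphi_{\lambda_0}}][R_{z_2}^*,R_{z_2}]\big\|_{HS}^2,\qquad \mathrm{tr}\,(I-\widetilde F^*\widetilde F)=\big\|[R_{\varphi_{\lambda_0}}^*,R_{z_2}]\big\|_{HS}^2,$$
so the problem becomes: show $I-\widetilde F\,\widetilde F^*$ and $I-\widetilde F^*\widetilde F$ are trace class.

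The final step is the crux, and it is where the Fredholm hypothesis is used. Since $\varphi_{\lambda_0}\HM=(z_1-\lambda_0)\HM$ is $R_{z_2}$‑invariant, $\HM\ominus(z_1-\lambda_0)\HM$ is $R_{z_2}^*$‑invariant and one checks $\widetilde F^*=R_{z_2}^*|_{\HM\ominus(z_1-\lambda_0)\HM}$; as $R_{z_2}^*$ is of class $C_{0\cdot}$, $\widetilde F$ is a contraction of class $C_{\cdot0}$, and by Lemma \ref{isosps} its kernel is isomorphic to $\ker S_{\varphi_{\lambda_0}}\cap\ker S_{z_2}$, which is finite dimensional because $S_{\varphi_{\lambda_0}}$ is Fredholm. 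Mere Fredholmness (or invertibility modulo compacts) of a contraction does not force its defect operators to be trace class — that already fails for $\tfrac12$ times the unilateral shift — so the argument must exploit the special nature of $\widetilde F$: it is the compression of the analytic Toeplitz operator $M_{z_2}$ to the $z_1$‑shift model space $\HM\ominus(z_1-\lambda_0)\HM$; equivalently $S_{z_1}$ is the compression of $M_{z_1}$ to the subspace $\HM^\perp$, which is invariant under both $M_{z_1}^*$ and $M_{z_2}^*$, so that $I-S_{z_i}S_{z_i}^*$ are the compressions to $\HM^\perp$ of the coordinate projections $I-M_{z_i}M_{z_i}^*$, whose product $(I-M_{z_1}M_{z_1}^*)(I-M_{z_2}M_{z_2}^*)$ on $H^2(\D^2)$ is the rank‑one projection onto the constants. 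The plan is to use this rank‑one relation to write $I-\widetilde F\,\widetilde F^*$ and $I-\widetilde F^*\widetilde F$ as finite‑rank perturbations of operators that are manifestly trace class, the finitely many exceptional directions being controlled by the finite‑dimensionality coming from the Fredholm hypothesis. Once both traces are finite, Steps 1–2 show $[R_{z_1}^*,R_{z_1}][R_{z_2}^*,R_{z_2}]$ and $[R_{z_1}^*,R_{z_2}]$ are Hilbert–Schmidt, hence $C$ is, i.e., $\HM$ is a Hilbert–Schmidt submodule.

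The main obstacle is exactly this last passage: upgrading the finite‑dimensionality information about $\widetilde F$ furnished by the hypothesis to \emph{trace‑class} control of $I-\widetilde F^*\widetilde F$ and $I-\widetilde F\,\widetilde F^*$. This cannot be obtained from soft Fredholm theory; it requires using in an essential way that $\widetilde F$ arises from the coordinate multiplications on the doubly $M^*$‑invariant subspace $\HM^\perp$ together with the rank‑one identity above, and I expect the delicate estimates of the whole proof to be concentrated there.
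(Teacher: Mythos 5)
This statement is imported from \cite{Ya01} and the paper gives no proof of it, so there is nothing internal to compare against; I can only judge your proposal on its own terms. Your preparatory reductions are sound: the equivalence of Hilbert--Schmidtness of $C$ with that of $[R_{z_1}^*,R_{z_1}][R_{z_2}^*,R_{z_2}]$ and $[R_{z_1}^*,R_{z_2}]$, the passage from $z_1$ to $\varphi_{\lambda_0}(z_1)$ via power-series expansion and the commutator estimate, and the trace identities
$\mathrm{tr}\,(I-\widetilde F\widetilde F^*)=\|P_{\lambda_1}P_{(0)}\|_{HS}^2$ and $\mathrm{tr}\,(I-\widetilde F^*\widetilde F)=\|[R_{\varphi_{\lambda_0}}^*,R_{z_2}]\|_{HS}^2$ read off from Proposition~\ref{rlspoaci} are all correct (the back-transfer of the first product also needs the second commutator to already be Hilbert--Schmidt, but that ordering can be arranged).

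The gap is that the theorem itself is never proved. Everything you establish reduces the claim to: \emph{if $S_{z_1}-\lambda_0$ is Fredholm, then $I-\widetilde F\widetilde F^*$ and $I-\widetilde F^*\widetilde F$ are trace class}; and for this step you offer only a plan. You correctly observe that Fredholmness of a contraction never forces trace-class defects by soft arguments, and the mechanism you point to --- the rank-one identity $(I-M_{z_1}M_{z_1}^*)(I-M_{z_2}M_{z_2}^*)=$ projection onto the constants --- does not transfer to $\HM$, because compression is not multiplicative: $[R_{z_1}^*,R_{z_1}][R_{z_2}^*,R_{z_2}]=P_\HM(I-M_{z_1}M_{z_1}^*)P_\HM(I-M_{z_2}M_{z_2}^*)\big|_\HM$, and the interior $P_\HM$ is precisely the obstruction the theorem is about. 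The hypothesis gives you finite-dimensionality of $\ker\widetilde F$ and $\ker\widetilde F^*$ (via Lemma~\ref{isosps} and the identification of $\ker F_\lambda^*$ with $\HM\ominus((z_1-\lambda_1)\HM+(z_2-\lambda_2)\HM)$), which is information about two finite-dimensional subspaces, whereas the conclusion requires summability of the full singular-value sequences of the defect operators; no estimate in your outline bridges that. Until that passage is supplied --- and it is the entire analytic content of Yang's theorem --- the proposal is a correct reformulation of the problem, not a proof.
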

The following result is immediate.
\begin{corollary}\label{mrischszp}
Let $\HM \in Lat(H^2(\D^2))$ contain $z_1 - \varphi(z_2)$ and $\HN = V\HM$, where $\varphi$ is a finite Blaschke product. If $\dim (\HN \ominus (I\otimes M_z)\HN) < \infty$, then $\HM$ is a Hilbert-Schmidt submodule.
\end{corollary}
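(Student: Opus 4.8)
The plan is to combine the essential-spectrum estimate for $S_{z_1}$ established just above with Yang's Hilbert-Schmidtness criterion, so that the corollary becomes essentially a one-line deduction. First I would apply Lemma \ref{espfszozp}(ii): under the hypothesis $\dim(\HN \ominus (I\otimes M_z)\HN) < \infty$ we have $\sigma_e(S_{z_1}) \subseteq \T$. Since $\D \cap \T = \emptyset$, this gives $\D \not\subseteq \sigma_e(S_{z_1})$, and therefore $\D \not\subseteq \sigma_e(S_{z_1}) \cap \sigma_e(S_{z_2})$. Theorem \ref{grshcm} then applies verbatim and yields that $\HM$ is a Hilbert-Schmidt submodule.

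The substantive work has already been carried out in the preceding lemmas, and I would only remind the reader where it lies: Lemma \ref{clrfsnczp} supplies the closed range of $S_\HN - \lambda$ for every $\lambda \in \D$; Lemma \ref{sFosnzcp} then computes $\ind(S_\HN - \lambda) = \dim(\HN \ominus (I\otimes M_z)\HN) - \dim K_\varphi$, which is finite precisely when the wandering subspace of $(I\otimes M_z)|_\HN$ is finite dimensional, so that $S_\HN - \lambda$ is Fredholm for all $|\lambda| < 1$; and the proof of Lemma \ref{espfszozp} gives the unitary equivalence $S_{z_1} \sim S_\HN$ through $V$, transporting this information to $S_{z_1}$. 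Consequently $\sigma_e(S_{z_1})$ meets $\D$ in the empty set, which is exactly what Theorem \ref{grshcm} needs.

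In short, there is no real obstacle at this stage: the corollary is an immediate packaging of Lemma \ref{espfszozp}(ii) and Theorem \ref{grshcm}, and the only thing worth spelling out in the write-up is the trivial observation that $\D \cap \sigma_e(S_{z_1}) = \emptyset$ forces $\D \cap (\sigma_e(S_{z_1}) \cap \sigma_e(S_{z_2})) = \emptyset$.
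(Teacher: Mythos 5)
Your proof is correct and is essentially identical to the paper's: both invoke Lemma \ref{espfszozp}(ii) to get $\sigma_e(S_{z_1})\subseteq\T$ and then apply Theorem \ref{grshcm}. The extra recap of where the supporting work lies is accurate but not needed.
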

\begin{proof}
If $\dim (\HN \ominus (I\otimes M_z)\HN) < \infty$, then Lemma \ref{espfszozp} (ii) ensures that $\sigma_e(S_{z_1}) \subseteq \T$. Thus by Theorem \ref{grshcm}, we conclude that $\HM$ is a Hilbert-Schmidt submodule.
\end{proof}
Before we prove Theorem \ref{hlsmnmcdv}, we need some lemmas.
\begin{lemma}\label{dmineqlm}
Let $\HM \in Lat(H^2(\D^2))$ contain $z_1 - \varphi(z_2)$, where $\varphi$ is an inner function. Then
$$\dim (\HM \ominus (z_1\HM + \varphi(z_2)\HM)) \leq \dim (H^2(\D^2) \ominus [z_1, \varphi(z_2)]) \rank \HM.$$
\end{lemma}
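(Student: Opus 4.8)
The statement bounds $\dim(\HM \ominus (z_1\HM + \varphi(z_2)\HM))$ by $\dim(H^2(\D^2)\ominus [z_1,\varphi(z_2)])\cdot\rank\HM$, where $[z_1,\varphi(z_2)]$ denotes the submodule generated by $z_1$ and $\varphi(z_2)$. The plan is to exploit the containment $z_1-\varphi(z_2)\in\HM$ to collapse the two-generator ideal action on $\HM$ onto a single-generator picture, and then transfer a spanning set for the ambient quotient to a spanning set for the quotient of $\HM$.

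First I would observe that since $z_1-\varphi(z_2)\in\HM$, the subspaces $z_1\HM$ and $\varphi(z_2)\HM$ satisfy $z_1\HM - \varphi(z_2)\HM \subseteq \HM$ in a controlled way: modulo the closure $z_1\HM+\varphi(z_2)\HM$, multiplication by $z_1$ and by $\varphi(z_2)$ agree on $\HM$ up to the vector $(z_1-\varphi(z_2))f$ which lies in $z_1\HM+\varphi(z_2)\HM$ for each $f\in\HM$. Next, let $\{f_1,\dots,f_r\}$ be a generating set for $\HM$ as a module over $\C[z_1,z_2]$, with $r=\rank\HM$ (if $\rank\HM=\infty$ the inequality is trivial). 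Each $f\in\HM$ is then a limit of polynomial combinations $\sum_i p_i(z_1,z_2)f_i$; I would push all $z_1$-factors through using $z_1 f_i \equiv \varphi(z_2)f_i$ and reduce $p_i$ to a function of $z_2$ alone, modulo $z_1\HM+\varphi(z_2)\HM$. The key technical point is that the residues obtained only involve the action of $\C[z_2]$, and since $z_1\cdot$ and $\varphi(z_2)\cdot$ both absorb into the denominator, the quotient $\HM\ominus(z_1\HM+\varphi(z_2)\HM)$ is spanned by the images of $q_j(z_2)f_i$ where $q_j$ ranges over a spanning set of $H^2(z_2)$ modulo $\varphi H^2(z_2)\oplus(\text{something})$; more precisely, I would map $H^2(\D^2)\ominus[z_1,\varphi(z_2)]$, tensored $r$ times (one copy per generator), onto this quotient.

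Concretely, I would define a bounded surjection
\[
\Phi:\ \bigoplus_{i=1}^r \big(H^2(\D^2)\ominus[z_1,\varphi(z_2)]\big)\ \lra\ \HM\ominus(z_1\HM+\varphi(z_2)\HM)
\]
by $\Phi(g_1,\dots,g_r) = P\big(\sum_i g_i(z_1,z_2) f_i\big)$ where $P$ is the orthogonal projection onto $\HM\ominus(z_1\HM+\varphi(z_2)\HM)$ and $g_i$ is taken from a fixed complement. Surjectivity follows from the reduction in the previous paragraph: any $h$ in the target is $P$ of a polynomial combination $\sum p_i f_i$, and modulo $z_1\HM+\varphi(z_2)\HM$ one may replace each $p_i$ by its class in $H^2(\D^2)\ominus[z_1,\varphi(z_2)]$ because any monomial divisible by $z_1$ or by $\varphi(z_2)$ lands in $z_1\HM+\varphi(z_2)\HM$ after multiplication by $f_i\in\HM$. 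The dimension inequality is then immediate from surjectivity of $\Phi$.

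The main obstacle I anticipate is making the reduction ``replace $p_i$ by its class modulo $[z_1,\varphi(z_2)]$'' rigorous at the level of closures rather than polynomials: one must check that if $g_i\in[z_1,\varphi(z_2)]$ then $g_i f_i\in\overline{z_1\HM+\varphi(z_2)\HM}$, i.e. that the generating action is compatible with the closure operation. This is where the hypothesis that $\varphi$ is inner (so $M_{\varphi(z_2)}$ is an isometry and $\varphi(z_2)\HM$ is automatically closed) and, separately, the structure of $[z_1,\varphi(z_2)]$ (whose quotient is explicitly understood, cf. the $M_\varphi$ discussion above) should be used to ensure $z_1\HM+\varphi(z_2)\HM$ behaves well enough. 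A clean way around it is to work first with polynomial $g_i$ and $f_i$ replaced by a dense set of polynomial elements of $\HM$, establish the spanning statement there, and then pass to the limit using boundedness of $\Phi$ and finite-dimensionality (or at least closedness) considerations for the target quotient.
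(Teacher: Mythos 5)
Your proposal is essentially the paper's argument: both reduce to the nontrivial case where $\rank\HM$ and $\dim(H^2(\D^2)\ominus[z_1,\varphi(z_2)])$ are finite, fix a generating set $\{f_j\}$ (containing $z_1-\varphi(z_2)$) and an orthonormal basis $\{e_i\}$ of $H^2(\D^2)\ominus[z_1,\varphi(z_2)]$, and show that the projections $P(e_if_j)$ span $\HM\ominus(z_1\HM+\varphi(z_2)\HM)$ by splitting each polynomial $h$ as $h_1+h_2$ with $h_1\in H^2(\D^2)\ominus[z_1,\varphi(z_2)]$, $h_2\in[z_1,\varphi(z_2)]$, and using that $h_2f_j$ lies in the closure of $z_1\HM+\varphi(z_2)\HM$. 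The only divergence is at the obstacle you single out at the end: instead of proving surjectivity of $\Phi$ and then passing from polynomial combinations to their closed span, the paper runs the argument dually --- it takes $g$ in the target orthogonal to every $P(e_if_j)$, deduces $\langle g, hf_j\rangle=0$ for all polynomials $h$, and concludes $g\perp\HM$, hence $g=0$; since orthogonality survives limits, no closure-passing is needed. Your proposed repair (the range of $\Phi$ is finite dimensional, hence closed, hence coincides with its dense image in the target) also works, so the gap you flag is fillable; note in addition that in the nontrivial case $\dim (H^2(\D^2)\ominus[z_1,\varphi(z_2)])=\dim K_\varphi(z_2)<\infty$ forces $\varphi$ to be a finite Blaschke product, so the $e_i$ are bounded rational functions and the products $e_if_j$ and the key inclusion $[z_1,\varphi(z_2)]f_j\subseteq\overline{z_1\HM+\varphi(z_2)\HM}$ cause no integrability trouble.
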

\begin{proof}
If $\dim (H^2(\D^2) \ominus [z_1, \varphi(z_2)])$ or $\rank \HM$ is infinity, then there is nothing to prove. So suppose $\dim (H^2(\D^2) \ominus [z_1, \varphi(z_2)])$ and $\rank \HM$ are finite. Suppose $\{e_i\}$ is an orthonormal basis of $H^2(\D^2) \ominus [z_1, \varphi(z_2)]$, and $\HM = [f_1, f_2, \cdots, f_{n+1}]$, where $f_j \in H^2(\D^2), f_{n+1} = z_1 - \varphi(z_2), j = 1, \cdots, n$. Let $P_\varphi$ be the orthogonal projection onto $\HM \ominus (z_1\HM + \varphi(z_2)\HM)$. We claim that $\HM \ominus (z_1\HM + \varphi(z_2)\HM)$ is contained in $\text{span}\{P_{\varphi} (e_if_j), i, j \geq 1\}$. Then the conclusion will follow from this claim.

Now we prove the claim. Suppose $g \in \HM \ominus (z_1\HM + \varphi(z_2)\HM)$ and $g$ is orthogonal to $\text{span}\{P_{\varphi} (e_if_j), i, j \geq 1\}$. Then for any polynomial $h$, there are $h_1 \in H^2(\D^2) \ominus [z_1, \varphi(z_2)], h_2 \in [z_1, \varphi(z_2)]$ such that $h = h_1 + h_2$. Note that $P_\varphi(h_1 f_j)$ is in $\text{span}\{P_{\varphi} (e_if_j), i, j \geq 1\}$ and $h_2f_j$ is in the closure of $z_1\HM + \varphi(z_2)\HM$. Thus
$$\langle g, hf_j\rangle = \langle g, h_1 f_j\rangle + \langle g, h_2f_j\rangle = 0.$$
Since $\HM$ is generated by $\{f_j\}$, we conclude that $g = 0$. So the claim holds and the proof is complete.
\end{proof}

\begin{prop}\label{indeve}
Let $\HM \in Lat(H^2(\D^2))$. If $[R_{z_1}^*,R_{z_2}]$ is compact, then $\forall \lambda, \eta \in \D^2$, $F_\lambda$ and $F_\eta$ are left semi-Fredholm operators and $\ind F_\lambda = \ind F_\eta$.
\end{prop}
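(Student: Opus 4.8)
The plan is to exploit the formulas from Propositions~\ref{rlspoaci} and~\ref{srsfgl} that express the defects of the fringe operators $\widetilde{F_\lambda}$ and $\widetilde{G_\lambda}$ in terms of products of commutators of the compressed shifts $R_{\varphi_{\lambda_1}}$, $R_{\varphi_{\lambda_2}}$. The key observation is that the compactness of $[R_{z_1}^*, R_{z_2}]$ propagates to all the relevant commutators at every parameter $\lambda \in \D^2$. First I would record that since $\varphi_{\lambda_i}$ is a Möbius transform, each $M_{\varphi_{\lambda_i}}$ differs from a linear-fractional function of $M_{z_i}$, so $[R_{\varphi_{\lambda_1}}^*, R_{\varphi_{\lambda_2}}]$ can be written as a norm-convergent series (or a rational function) in $R_{z_1}, R_{z_1}^*, R_{z_2}, R_{z_2}^*$ whose ``commutator part'' is built out of $[R_{z_1}^*, R_{z_2}]$; hence $[R_{\varphi_{\lambda_1}}^*, R_{\varphi_{\lambda_2}}]$ is compact for every $\lambda$. (The remark immediately following Proposition~\ref{rlspoaci} already asserts exactly this.)

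Next, using Proposition~\ref{rlspoaci}(i), for any $\lambda \in \D^2$ we have
\[
I - \widetilde{F_\lambda}^*\widetilde{F_\lambda} = [R_{\varphi_{\lambda_2}}^*, R_{\varphi_{\lambda_1}}][R_{\varphi_{\lambda_1}}^*, R_{\varphi_{\lambda_2}}]
\]
on $\HM \ominus (z_1 - \lambda_1)\HM$, and the right-hand side is compact by the previous step. Therefore $\widetilde{F_\lambda}^*\widetilde{F_\lambda}$ is a compact perturbation of the identity, which forces $\widetilde{F_\lambda}$ to be bounded below modulo a finite-dimensional subspace; that is, $\widetilde{F_\lambda}$, and hence $F_\lambda$, is left semi-Fredholm. (Equivalently one invokes the remark after Proposition~\ref{rlspoaci}, which already draws this conclusion.) So the left semi-Fredholmness of $F_\lambda$ for all $\lambda \in \D^2$ is immediate.

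For the equality of indices, the idea is a connectedness/continuity argument on the parameter space $\D^2$. Since $F_\lambda = F_{(\lambda_1,0)} - \lambda_2$ depends affinely on $\lambda_2$, and more generally one can check that $\lambda \mapsto F_\lambda$ (suitably realized on a fixed space, or compared via the unitary $U$ and the identifications in Lemma~\ref{rlbfadots}) is a continuous family of left semi-Fredholm operators, the integer-valued map $\lambda \mapsto \ind F_\lambda$ is locally constant on the connected set $\D^2$, hence constant; in particular $\ind F_\lambda = \ind F_\eta$ for all $\lambda, \eta$. The main obstacle I anticipate is the bookkeeping needed to make ``$\lambda \mapsto F_\lambda$ is a continuous family of semi-Fredholm operators'' precise, because the domains $\HM \ominus (z_1-\lambda_1)\HM$ vary with $\lambda$: one must either transplant everything onto a fixed model (for instance, conjugating by a unitary from $\HM \ominus (z_1-\lambda_1)\HM$ onto a reference space, or passing through the Koszul-complex description $\ker\partial_{R-\lambda}^1 \ominus \ran\partial_{R-\lambda}^0$ via Lemma~\ref{isosps}), or argue directly that the kernel dimension and cokernel dimension are each locally constant. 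Once the family is seen to vary continuously in the gap metric, stability of the semi-Fredholm index finishes the proof.
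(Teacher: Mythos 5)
Your first half (left semi-Fredholmness of every $F_\lambda$) is exactly the paper's argument: compactness of $[R_{z_1}^*,R_{z_2}]$ gives compactness of $[R_{\varphi_{\lambda_1}}^*,R_{\varphi_{\lambda_2}}]$ for all $\lambda$, and Proposition~\ref{rlspoaci}(i) then exhibits $\widetilde{F_\lambda}^*\widetilde{F_\lambda}$ as a compact perturbation of the identity. That part is fine.

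The index-constancy half has a genuine gap. You correctly identify the obstruction --- the domain $\HM\ominus(z_1-\lambda_1)\HM$ moves with $\lambda_1$, so $\lambda\mapsto F_\lambda$ is not literally a family on a fixed space --- but you do not resolve it; you only list two possible repairs, and one of them is wrong in principle: for a continuous semi-Fredholm family, $\dim\ker$ and $\dim\mathrm{coker}$ are only upper semicontinuous, not locally constant (only the index is), so ``argue directly that the kernel and cokernel dimensions are each locally constant'' cannot work. The other repair (transplanting onto a fixed model via unitaries, which would require norm-continuity of $\lambda_1\mapsto P_{\lambda_1}$ and a continuous choice of intertwiners) is plausible but is real work that you have not done. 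The paper sidesteps the issue entirely with a two-step walk that never moves a variable on which the domain depends: since $F_{(\lambda_1,\lambda_2)}=F_{(\lambda_1,0)}-\lambda_2$ acts on the fixed space $\HM\ominus(z_1-\lambda_1)\HM$, its index is constant in $\lambda_2$; dually, $G_{(\eta_1,\eta_2)}=G_{(0,\eta_2)}-\eta_1$ (Proposition~\ref{srsfgl}) acts on the fixed space $\HM\ominus(z_2-\eta_2)\HM$, so its index is constant in $\eta_1$; and at any single point $\mu$ the operators $F_\mu$ and $G_\mu$ have the same cokernel (both ranges have orthocomplement $\HM\ominus[(z_1-\mu_1)\HM+(z_2-\mu_2)\HM]$) and isomorphic kernels (Lemma~\ref{rlbfadots}), hence equal index. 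Chaining $\ind F_\lambda=\ind F_{(\lambda_1,\eta_2)}=\ind G_{(\lambda_1,\eta_2)}=\ind G_\eta=\ind F_\eta$ finishes the proof with no continuity-of-domains argument at all. If you want to keep your homotopy approach you must actually construct the gap-continuous (or unitarily trivialized) family; as written, the step is asserted rather than proved.
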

\begin{proof}
If $[R_{z_1}^*,R_{z_2}]$ is compact, then $[R_{\varphi_{\lambda_1}}^*,R_{\varphi_{\lambda_2}}]$ is compact for any $\lambda \in \D^2$. Hence Propositions \ref{rlspoaci} and \ref{srsfgl} ensure that $F_\lambda$ and $G_\eta$ are left semi-Fredholm operators. Since $F_\lambda = F_{(\lambda_1,0)} - \lambda_2, G_\eta = G_{(0,\eta_2)} - \eta_1$, it follows that
$$\ind F_\lambda = \ind F_{(\lambda_1, \eta_2)}, \quad \ind G_\eta = \ind G_{(\lambda_1, \eta_2)}.$$
Note that $F_\lambda$ and $G_\lambda$ have the same cokernel, and Lemma \ref{rlbfadots} implies that $\ker F_\lambda$ and $\ker G_\lambda$ are isomorphic. Therefore the conclusion follows from the above two equations.
\end{proof}

\begin{lemma}\label{hsipfgcp}
Let $\HM \in Lat(H^2(\D^2))$ contain $z_1 - \varphi(z_2)$, where $\varphi$ is a finite Blaschke product. If $\HM$ is a Hilbert-Schmidt submodule, then the space $\HM \ominus (z_1\HM + \varphi(z_2)\HM)$ is of finite dimensional.
\end{lemma}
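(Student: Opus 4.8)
The plan is to relate the finite-dimensionality of $\HM \ominus (z_1\HM + \varphi(z_2)\HM)$ to the essential spectrum of $S_{z_1}$ via the fringe operator, and then to use the Hilbert--Schmidt hypothesis to control that essential spectrum. First I would observe that $\HM \ominus (z_1\HM + \varphi(z_2)\HM)$ is, up to passing through $\varphi$, closely tied to $\ind_\lambda$-type quantities: since $\varphi(z_2)$ acts on $\HM$ and $z_1 - \varphi(z_2) \in \HM$, the space $z_1\HM + \varphi(z_2)\HM$ equals the closure of $z_1\HM + \varphi(z_2)\HM$ and one can rewrite the quotient in terms of the cokernel of a ``fringe-type'' operator $f \mapsto P(\varphi(z_2) - z_1) f$ on $\HM \ominus z_1\HM$, analogously to the Lemma identifying $\ran F_\lambda$. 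The key input is that $\HM$ Hilbert--Schmidt forces $[R_{z_1}^*, R_{z_2}]$ to be Hilbert--Schmidt, hence compact, so Proposition \ref{indeve} applies and all the fringe operators $F_\lambda$, $\lambda \in \D^2$, are left semi-Fredholm with a common index.

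Next I would bring in the structure from Corollary \ref{mrischszp} and Lemma \ref{espfszozp}: if $\dim(\HN \ominus (I\otimes M_z)\HN) = \infty$ then $\sigma_e(S_{z_1}) = \overline{\D}$, and by the unitary equivalence $S_{z_1} \sim S_\HN$ together with the analogous statement for $S_{z_2}$, one would get $\D \subseteq \sigma_e(S_{z_1}) \cap \sigma_e(S_{z_2})$, contradicting Theorem \ref{grshcm} (here is where Hilbert--Schmidtness is used decisively, since a Hilbert--Schmidt core operator certainly cannot have $\D \subseteq \sigma_e(S_{z_1}) \cap \sigma_e(S_{z_2})$). Therefore $\dim(\HN \ominus (I\otimes M_z)\HN) < \infty$. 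The remaining step is to convert this finiteness back to the bidisk side: using $V S_{z_1}^\varphi = (I\otimes M_z)V$ and $VV^* = I$, $V^*V = P_{M_\varphi^\perp}$, the space $\HN \ominus (I\otimes M_z)\HN$ is unitarily carried to a space closely related to $\HM \ominus (z_1 \HM)$ modulo $M_\varphi$; combining with the fact that $z_1 - \varphi(z_2) \in \HM$ (so $z_1 \HM$ and $\varphi(z_2)\HM$ differ by a controlled amount inside $\HM$) I would deduce $\dim(\HM \ominus (z_1\HM + \varphi(z_2)\HM)) < \infty$.

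An alternative, perhaps cleaner route: since $[R_{z_1}^*, R_{z_2}]$ is compact, Proposition \ref{indeve} gives that $F_{(0,0)}$ is left semi-Fredholm, and $\ker F_{(0,0)}^* = \HM \ominus (z_1\HM + z_2\HM)$; one then wants the $\varphi$-twisted version of the fringe operator (replace $z_2 - \lambda_2$ by $\varphi(z_2)$), whose cokernel is exactly $\HM \ominus (z_1\HM + \varphi(z_2)\HM)$, and argue it is Fredholm. Since $\varphi$ is a finite Blaschke product, composing with $\varphi$ amounts to a finite-rank-type perturbation at the level of $S_{z_2}$ versus $S_{\varphi(z_2)}$, so the semi-Fredholmness transfers. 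The main obstacle I anticipate is the bookkeeping in this last transfer step: one must check that moving between $z_2\HM$ and $\varphi(z_2)\HM$ (equivalently between $\ker S_{\varphi(z_2)}$-type spaces and the model via $V$) only changes dimensions by a finite amount, and that the range remains closed; this requires carefully using that $\dim K_\varphi(z_2) < \infty$ and that $\HM$ contains $z_1 - \varphi(z_2)$ so that $z_1$ and $\varphi(z_2)$ are interchangeable modulo $\HM$ itself. Once closedness and the finite-dimensional cokernel of the $\varphi$-twisted fringe operator are established, the lemma follows immediately.
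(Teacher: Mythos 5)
There is a genuine gap in both routes you sketch. In your first route the decisive step is the claim that a Hilbert--Schmidt submodule ``certainly cannot have $\D \subseteq \sigma_e(S_{z_1}) \cap \sigma_e(S_{z_2})$.'' That is the \emph{converse} of Theorem \ref{grshcm}, which only states that $\D \not\subseteq \sigma_e(S_{z_1}) \cap \sigma_e(S_{z_2})$ is \emph{sufficient} for Hilbert--Schmidtness; the converse is not available and is, for this class of submodules, essentially equivalent to the necessity direction of Theorem \ref{hlsmnmcdv} that the present lemma is designed to prove. The argument is therefore circular. (Even setting that aside, Lemma \ref{espfszozp} only controls $\sigma_e(S_{z_1})$, so you would still need a separate argument to place $\D$ inside $\sigma_e(S_{z_2})$ before any intersection statement could be invoked.) Your second route correctly isolates the real difficulty --- transferring finiteness between $z_1\HM + z_2\HM$ and $z_1\HM + \varphi(z_2)\HM$ --- but then disposes of it with the unsubstantiated assertion that passing from $S_{z_2}$ to $S_{\varphi(z_2)}$ is a ``finite-rank-type perturbation.'' No such perturbation statement is proved or obvious, and this transfer is exactly where the work lies.

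For comparison, the paper's proof handles the transfer by a concrete induction on the degree of $\varphi$. From Hilbert--Schmidtness it first gets that \emph{both} $[R_{z_1}^*,R_{z_1}][R_{z_2}^*,R_{z_2}]$ and $[R_{z_1}^*,R_{z_2}]$ are Hilbert--Schmidt, so by Propositions \ref{rlspoaci} and \ref{indeve} the space $(z_1-\lambda_1)\HM + (z_2-\lambda_2)\HM$ is closed and $\ind_\lambda \HM < \infty$ for every $\lambda \in \D^2$ (note that your proposal only uses the compactness of $[R_{z_1}^*,R_{z_2}]$, which gives left semi-Fredholmness but not finiteness of the cokernel $\ker F_\lambda^*$). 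Assuming without loss of generality $\varphi(0)=0$, one writes $\varphi(z_2) = z_2\psi(z_2)$ and reduces by induction to a single M\"obius factor $\psi = \phi_\alpha$. The finiteness of $\dim\bigl(\HM \ominus (z_1\HM + \phi_\alpha(z_2)\HM)\bigr)$ is then the index at the corresponding point of $\D^2$, and the surjection $T([g]) = [z_2 g]$ from $\HM/(z_1\HM + \phi_\alpha(z_2)\HM)$ onto $(z_1\HM + z_2\HM)/(z_1\HM + z_2\phi_\alpha(z_2)\HM)$, combined with the orthogonal decomposition of $\HM \ominus (z_1\HM + z_2\phi_\alpha(z_2)\HM)$, completes the step. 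If you want to salvage your approach, this quotient-map induction is the missing mechanism.
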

\begin{proof}
If $\HM$ is a Hilbert-Schmidt submodule, then $[R_{z_1}^*,R_{z_1}][R_{z_2}^*,R_{z_2}]$ and $[R_{z_1}^*,R_{z_2}]$ are Hilbert-Schmidt operators. It then follows from Propositions \ref{rlspoaci} and \ref{indeve} that $(z_1 - \lambda_1)\HM + (z_2 - \lambda_2)\HM$ is closed and $\ind_\lambda\HM = \dim (\HM \ominus (z_1 - \lambda_1)\HM + (z_2 - \lambda_2)\HM) < \infty$. Without loss of generality, suppose $\varphi(0) = 0$, i.e. $\varphi(z_2) = z_2 \psi(z_2)$, where $\psi(z_2)$ is a finite Blaschke product. Note that by induction, we only need to prove the case when $\psi(z_2)$ is a m\"{o}bius transform. So suppose $\psi(z_2) = \frac{\alpha - z_2}{1-\overline{\alpha}z_2}: = \phi_\alpha(z_2)$. Now we show $\dim(\HM \ominus (z_1\HM + z_2\phi_\alpha(z_2)\HM)) < \infty$.

Notice that $\dim(\HM \ominus (z_1\HM + \phi_\alpha(z_2)\HM) < \infty$. Define
$$T: \HM \Big/ (z_1\HM + \phi_\alpha(z_2)\HM) \rightarrow (z_1\HM + z_2\HM)\Big/(z_1\HM + z_2\phi_\alpha(z_2)\HM)$$
by $T([g])=[z_2 g], g \in \HM$. By a verification, we see that $T$ is well defined and $T$ is onto. Thus $\dim((z_1\HM + z_2\HM)/(z_1\HM + z_2\phi_\alpha(z_2)\HM)) < \infty$. Since
$$\HM \ominus (z_1\HM + z_2\phi_\alpha(z_2)\HM) = \HM \ominus (z_1\HM + z_2\HM) \oplus [(z_1\HM + z_2\HM) \ominus (z_1\HM + z_2\phi_\alpha(z_2)\HM)],$$
we obtain that $\dim(\HM \ominus (z_1\HM + z_2\phi_\alpha(z_2)\HM)) < \infty$. The proof is complete.
\end{proof}

Now we can prove Theorem \ref{hlsmnmcdv}.
\begin{proof}[Proof of Theorem \ref{hlsmnmcdv}]
Suppose $\HM$ is finitely generated. Since $H^2(\D^2) \ominus [z_1, \varphi(z_2)] = K_\varphi(z_2)$, Lemma \ref{dmineqlm} asserts that $\dim (\HM \ominus (z_1\HM + \varphi(z_2)\HM))$ is finite. Let $\HN = V\HM$. It is not difficult to check that $V^*(\HN \ominus (I\otimes M_z)\HN) \subseteq \HM \ominus (z_1\HM + \varphi(z_2)\HM)$. Thus $\dim(\HN \ominus (I\otimes M_z)\HN) < \infty$. Hence by Corollary \ref{mrischszp}, the submodule $\HM$ is Hilbert-Schmidt.

For the necessity, if $\HM$ is a Hilbert-Schmidt submodule, then Lemma \ref{hsipfgcp} implies $\dim(\HM \ominus (z_1\HM + \varphi(z_2)\HM))< \infty$. Thus $\dim(\HN \ominus (I\otimes M_z)\HN) < \infty$. Note that $K_\varphi(z_2) \otimes L^2_a(\D)$ is isomprphic to $\C^k \otimes L^2_a(\D)$, where $k$ is the order of $\varphi$. By Theorem 3.6 in \cite{Sh01}, we have $\HN = [\HN \ominus (I\otimes M_z)\HN]$. Therefore $\HN$ is finitely generated. One verifies that $\HM = [V^*(\HN \ominus (I\otimes M_z)\HN), z_1-\varphi(z_2)]$. So $\HM$ is finitely generated.
\end{proof}

\begin{corollary}\label{nchsfgszp}
Let $\HM = [f_1, \cdots, f_n, z_1 - \varphi(z_2)]$, where $\varphi$ is a finite Blaschke product and $f_j \in H^2(\D^2), j = 1, \cdots, n$, are arbitrary. Then $\HM$ is a Hilbert-Schmidt submodule.
\end{corollary}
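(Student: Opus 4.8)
The plan is to read this off directly from Theorem \ref{hlsmnmcdv}. By construction $\HM = [f_1, \ldots, f_n, z_1 - \varphi(z_2)]$ is the smallest submodule of $H^2(\D^2)$ containing the finite list $f_1, \ldots, f_n, z_1 - \varphi(z_2)$, so $\HM$ is a finitely generated submodule with $\rank \HM \le n+1$. Moreover, every generator belongs to the submodule it generates, so $z_1 - \varphi(z_2) \in \HM$; since $\varphi$ is a finite Blaschke product, this puts us exactly in the setting of Theorem \ref{hlsmnmcdv}. The implication "finitely generated $\Rightarrow$ Hilbert-Schmidt" from that theorem then gives immediately that $\HM$ is a Hilbert-Schmidt submodule.

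Thus there is no real obstacle: the only remark worth making is the trivial observation that $z_1 - \varphi(z_2) \in \HM$, which is what allows Theorem \ref{hlsmnmcdv} to apply. If one prefers to avoid citing the main theorem as a black box, one can unwind the argument directly: since $H^2(\D^2) \ominus [z_1, \varphi(z_2)] = K_\varphi(z_2)$ is finite dimensional and $\rank \HM < \infty$, Lemma \ref{dmineqlm} gives $\dim(\HM \ominus (z_1\HM + \varphi(z_2)\HM)) < \infty$; then, with $\HN = V\HM$, one has $\dim(\HN \ominus (I\otimes M_z)\HN) < \infty$, and Corollary \ref{mrischszp} finishes the job. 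Either route is short, but presenting the statement as a corollary of Theorem \ref{hlsmnmcdv} is the cleanest.
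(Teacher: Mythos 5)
Your proposal is correct and matches the paper exactly: the corollary is stated without proof precisely because it is the sufficiency direction of Theorem \ref{hlsmnmcdv} applied to the finitely generated submodule $\HM$, which contains $z_1-\varphi(z_2)$ by construction. Your optional ``unwound'' route via Lemma \ref{dmineqlm} and Corollary \ref{mrischszp} is also just the paper's own proof of that direction of the theorem, so nothing new is needed.
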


\subsection{Submodules containing $z_1 - z_2$} In this subsection, we consider the special case $\varphi(z_2) = z_2$ and fully characterize the submodules containing $z_1 - z_2$.
 In this case, since the space $K_\varphi(z_2) \otimes L^2_a(\D) = L^2_a(\D)$, we can write out the operators $V$ and $V^*$ more explicitly. Indeed,
$$V: H^2(\D^2) \rightarrow L^2_a(\D)$$
is the operator defined by $Vf(\lambda) = f(\lambda, \lambda)$, and \[V^*g(z_1,z_2) = \frac{1}{z_2 - z_1} \int_{z_1}^{z_2} g(\lambda) d\lambda.\] One checks that $\ker V = [z_1 - z_2]$ and $V^*$ is an isometry. Suppose $\HN \in Lat(M_z, L^2_a(\D))$, let $\HM = \tau(\HN): = V^* \HN + \ker V$, then $\HM \in Lat(H^2(\D^2))$. Note that $\tau$ defines a one-to-one correspondence between $Lat(M_z, L^2_a(\D))$ and submodules in $Lat(H^2(\D^2))$ that contain $\ker V$ (\cite{Ri87}). Let $\HM_0 = [z_1 - z_2]$. Then $P_{\HM_0^\perp}M_{z_1}|_{\HM_0^\perp} = P_{\HM_0^\perp}M_{z_2}|_{\HM_0^\perp}$, and $P_{\HM_0^\perp}M_{z_1}|_{\HM_0^\perp}$ is unitarily equivalent to the Bergman shift $M_z$ on the Bergman space $L^2_a(\D)$. In fact, $P_{\HM_0^\perp}M_{z_1}|_{\HM_0^\perp} V^* = V^* M_z$ on $L^2_a(\D)$ (see also \cite{DP89, GSZZ09}). The following lemma is proved in \cite{LR}.
\begin{lemma}[\cite{LR}]\label{rbhanthbs}
Let $\HM \in Lat(H^2(\D^2))$ contain $z_1 - z_2$, and $\HN = V\HM$. Then for every $\lambda \in \D$, we have $\HM = [V^*(\HN \ominus (z-\lambda)\HN), z_1 - z_2]$ and $\ind \HN \leq \ind_{(\lambda,\lambda)}\HM \leq \ind \HN + 1$, where $\ind \HN = \dim (\HN \ominus z\HN)$.
\end{lemma}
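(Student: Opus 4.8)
The plan is to reduce everything to the Bergman-space side via the intertwining $V^*M_z = P_{\HM_0^\perp}M_{z_1}|_{\HM_0^\perp}V^*$ and then apply the structure results for $Lat(M_z, L^2_a(\D))$ available from the wandering-subspace theory of the Bergman shift. First I would record that $\ker V = \HM_0 = [z_1-z_2]$, that $V^*$ is an isometry, and that $\HM = V^*\HN + \HM_0$ orthogonally, with $V^*\HN$ the ``new'' part; this is the content of the correspondence $\tau$. Since the Bergman shift has the wandering subspace property (Aleman--Richter--Sundberg), every $\HN \in Lat(M_z,L^2_a(\D))$ satisfies $\HN = [\HN \ominus z\HN]$, and more generally, replacing $z$ by $z-\lambda$, the operator $M_z - \lambda$ on $L^2_a(\D)$ is again (unitarily equivalent to, after a disk automorphism, or directly) an operator with the wandering subspace property on $\HN$, so $\HN = [\HN\ominus(z-\lambda)\HN]$ as an invariant subspace of $M_z$ as well. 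Pulling this back through $V^*$ and adjoining a generator for $\ker V$ gives $\HM = [V^*(\HN\ominus(z-\lambda)\HN),\,z_1-z_2]$; the verification that the right-hand side lands in $\HM$ and generates it is a routine density argument using $V^*M_z = M_{z_1}|_{\HM_0^\perp}$-type intertwining modulo $\HM_0$, exactly as in the proof of Theorem \ref{hlsmnmcdv}.

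For the index inequality, the key observation is that $\ind_{(\lambda,\lambda)}\HM = \dim\bigl(\HM \ominus ((z_1-\lambda)\HM + (z_2-\lambda)\HM)\bigr)$, and since $z_1-z_2 \in \HM$ we have $(z_1-\lambda)\HM + (z_2-\lambda)\HM = (z_1-\lambda)\HM + (z_1-z_2)\HM \supseteq (z_1-z_2)\HM = (z_1 - z_2)H^2(\D^2)\cap\HM$ modulo closure issues, which are handled because $\HM$ is Hilbert-Schmidt (it contains $z_1-z_2$, so Corollary \ref{nchsfgszp} does not apply directly, but $\HM$ Hilbert-Schmidt or not, $(z_1-\lambda)\HM + (z_2-\lambda)\HM$ is closed here because one checks it corresponds under $\tau$ to a sum of closed subspaces in $L^2_a(\D)$). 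Concretely, I would show that $V$ maps $(z_1-\lambda)\HM + (z_2-\lambda)\HM$ onto $(z-\lambda)\HN$ and has kernel $\HM_0$, so that $\HM \ominus ((z_1-\lambda)\HM+(z_2-\lambda)\HM)$ sits in a short exact sequence with $\HN\ominus(z-\lambda)\HN$ on one side and a space of dimension at most $1$ — coming from whether or not $\HM_0$ itself contributes a class not already in $(z_1-\lambda)\HM+(z_2-\lambda)\HM$, equivalently whether $z_1 - z_2 \in (z_1-\lambda)\HM + (z_2-\lambda)\HM$. This pins $\ind_{(\lambda,\lambda)}\HM$ between $\dim(\HN\ominus(z-\lambda)\HN) = \ind\HN$ and $\ind\HN + 1$.

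The main obstacle I anticipate is the bookkeeping at the interface between the two preimages under $V$: showing precisely that $V\bigl((z_1-\lambda)\HM+(z_2-\lambda)\HM\bigr) = (z-\lambda)\HN$ with closed range, and identifying the at-most-one-dimensional ``extra'' piece cleanly. The intertwining identity only says $VM_{z_1} = (M_z)V$ and $VM_{z_2} = (M_z)V$ (both coordinate multiplications collapse to the Bergman shift under $V$), so $V$ sends $(z_1-\lambda)f$ and $(z_2-\lambda)f$ to the same element $(z-\lambda)Vf$; the subtlety is that distinct preimages differ by $\ker V = \HM_0$, and one must track whether the relation $(z_1-\lambda)f_1 + (z_2-\lambda)f_2 \in \HM_0$ forces membership of $z_1 - z_2$ in the sum or not. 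Once that dichotomy is isolated, the two-sided bound follows immediately, and the closedness is inherited from closedness of $(z-\lambda)\HN$ in $L^2_a(\D)$ (which holds since $M_z - \lambda$ is bounded below on $\HN\ominus\ker(M_z-\lambda)|_\HN$, and $M_z-\lambda$ is injective) together with closedness of $\HM_0$.
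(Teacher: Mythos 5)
Your treatment of the generation statement coincides with the paper's: both invoke the Aleman--Richter--Sundberg wandering subspace property to get $\HN=[\HN\ominus(z-\lambda)\HN]$ and pull back through $V^*$. For the index inequality, however, you take a genuinely different and noticeably heavier route. The paper's argument is two lines: for the lower bound it checks directly that
$\langle V^*h,(z_1-\lambda)f+(z_2-\lambda)g\rangle = \langle h,(z-\lambda)V(f+g)\rangle_{L^2_a}=0$
for $h\in\HN\ominus(z-\lambda)\HN$, so $V^*(\HN\ominus(z-\lambda)\HN)$ sits inside $\HM\ominus((z_1-\lambda)\HM+(z_2-\lambda)\HM)$ and $\ind_{(\lambda,\lambda)}\HM\geq\ind\HN$; for the upper bound it simply combines the generation statement just proved with the general fact $\ind_\lambda\HM\leq\rank\HM$, giving $\ind_{(\lambda,\lambda)}\HM\leq\ind\HN+1$ with no analysis of the sum $(z_1-\lambda)\HM+(z_2-\lambda)\HM$ at all. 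Your quotient/exact-sequence analysis through $V$ can be made to work, but it forces you to confront exactly the issues you flag: closedness of $(z_1-\lambda)\HM+(z_2-\lambda)\HM$ (which the paper only establishes later, in Lemma \ref{clsfdrd}, and which is needed in your formulation to pass from $\dim\HM/E$ to $\dim(\HM\ominus E)$ for the lower bound), and the identification of the extra piece $\HM_0/(\HM_0\cap E)$, whose one-dimensionality ultimately rests on $\ind\leq\rank$ applied to $[z_1-z_2]$ --- the very fact that gives the upper bound directly. So your approach buys nothing here and costs the closedness lemma. One concrete error to correct: the identity $(z_1-z_2)\HM=(z_1-z_2)H^2(\D^2)\cap\HM$ is false (take $\HM=[z_1-z_2]$: then $(z_1-z_2)\cdot 1$ lies in the right-hand side but not in $(z_1-z_2)\HM$); the distinction between $(z_1-z_2)\HM$, $[z_1-z_2]$, and $(z_1-z_2)H^2\cap\HM$ is precisely where the ``at most one extra dimension'' lives, so this conflation, though parenthetical in your write-up, should not survive into a final proof.
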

In fact, suppose $\HM \in Lat(H^2(\D^2))$ contains $z_1 - z_2$ and $\HN = V\HM$, by $\HN = [\HN \ominus (z-\lambda)\HN]$ (\cite{ARS96}), we have $\HM = [V^*(\HN \ominus (z-\lambda)\HN), z_1 - z_2], \lambda \in \D$. Note that for $f, g \in \HM, h \in \HN \ominus (z-\lambda)\HN$,
$$\langle V^*h, (z_1 - \lambda)f + (z_2 - \lambda)g\rangle_{H^2(\D^2)} = \langle h, (z-\lambda)V(f+g)\rangle_{L^2_a} = 0.$$
So $V^*(\HN \ominus (z-\lambda)\HN) \subseteq \HM \ominus ((z_1 - \lambda)\HM + (z_2 - \lambda)\HM)$. Since $\ind_{(\lambda,\lambda)}\HM$ is less than or equal to the rank of $\HM$ and $\dim (\HN \ominus (z-\lambda)\HN) = \dim (\HN \ominus z\HN) = \ind \HN$, we immediately obtain $\ind \HN \leq \ind_{(\lambda,\lambda)}\HM \leq \ind \HN + 1$. Thus if $\HM \in Lat(H^2(\D^2))$ contains $z_1 - z_2$, then $\HM$ is finitely generated if and only if $\HN$ is finitely generated, which is equivalent to the condition that $\ind_{(0,0)}\HM < \infty$. By Lemmas \ref{espfszozp} and \ref{rbhanthbs}, we obtain the following result.

\begin{lemma}\label{untebsamr}
Let $\HM \in Lat(H^2(\D^2))$ contain $z_1 - z_2$, and $\HN = V\HM$.\\
(i) If $\ind_{(0,0)}\HM = \infty$, then $\sigma_e(S_{z_i}) = \overline{\D}, i = 1, 2$.\\
(ii) If $\ind_{(0,0)}\HM < \infty$, then $\sigma_e(S_{z_i}) \subseteq \T, i = 1, 2$.
\end{lemma}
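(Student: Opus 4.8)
The plan is to reduce everything to statements about the Bergman shift $M_z$ on $L^2_a(\D)$ via the partial isometry $V$, and then invoke Lemma \ref{espfszozp} together with Lemma \ref{rbhanthbs}. Since $\varphi(z_2)=z_2$, we have $K_\varphi(z_2)=\C$ and $K_\varphi(z_2)\otimes L^2_a(\D)=L^2_a(\D)$, so $\dim K_\varphi=1<\infty$; this means the hypotheses of Lemma \ref{espfszozp} are always available once we know $\HN=V\HM$ is invariant under $I\otimes M_z = M_z$. The first step is simply to record that for $\HM$ containing $z_1-z_2$, the quantity $\dim(\HN\ominus M_z\HN)=\ind\HN$ and that by Lemma \ref{rbhanthbs} the two conditions $\ind_{(0,0)}\HM=\infty$ and $\ind\HN=\infty$ are equivalent (indeed $\ind\HN\le\ind_{(0,0)}\HM\le\ind\HN+1$). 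So case (i) of the present lemma corresponds exactly to $\dim(\HN\ominus M_z\HN)=\infty$, and case (ii) to $\dim(\HN\ominus M_z\HN)<\infty$.

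For the statement about $S_{z_1}$, both parts follow immediately: in case (i), Lemma \ref{espfszozp}(i) gives $\sigma_e(S_{z_1})=\overline\D$; in case (ii), Lemma \ref{espfszozp}(ii) gives $\sigma_e(S_{z_1})\subseteq\T$. The only real work is to get the same conclusions for $S_{z_2}$. Here I would exploit the symmetry between the two coordinates. Because $\HM$ contains $z_1-z_2$, multiplication by $z_1$ and by $z_2$ agree modulo $\HM_0=[z_1-z_2]$, and in fact on $\HM^\perp\subseteq\HM_0^\perp$ one has $P_{\HM_0^\perp}M_{z_1}|_{\HM_0^\perp}=P_{\HM_0^\perp}M_{z_2}|_{\HM_0^\perp}$ (the identity quoted just before Lemma \ref{rbhanthbs}). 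Compressing further to $\HM^\perp$, this yields $S_{z_1}=S_{z_2}$ as operators on $\HM^\perp$. Hence $\sigma_e(S_{z_2})=\sigma_e(S_{z_1})$ and both parts for $i=2$ are immediate.

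Thus the proof is essentially: translate the index hypothesis through Lemma \ref{rbhanthbs} into the language of $\HN$ and $M_z$; apply Lemma \ref{espfszozp} (with $\varphi(z_2)=z_2$, so $\dim K_\varphi=1$) to handle $S_{z_1}$; and observe $S_{z_1}=S_{z_2}$ on $\HM^\perp$ to transfer the conclusion to $S_{z_2}$. I do not anticipate a genuine obstacle here—the lemma is a packaging of the previously established facts—but the one point that deserves a careful line is the identity $S_{z_1}=S_{z_2}$ on $\HM^\perp$: one must note that $\HM\supseteq[z_1-z_2]$ forces $\HM^\perp\subseteq[z_1-z_2]^\perp=\HM_0^\perp$, so that compressing $M_{z_1}-M_{z_2}$ (which annihilates $\HM_0$) to $\HM^\perp$ gives zero, i.e. $P_{\HM^\perp}(M_{z_1}-M_{z_2})|_{\HM^\perp}=0$. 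Everything else is a direct citation of Lemmas \ref{espfszozp} and \ref{rbhanthbs}.
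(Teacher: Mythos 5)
Your proposal is correct and follows essentially the same route as the paper, which derives this lemma directly from Lemmas \ref{espfszozp} and \ref{rbhanthbs} exactly as you describe (translate $\ind_{(0,0)}\HM$ into $\ind\HN=\dim(\HN\ominus M_z\HN)$, then apply Lemma \ref{espfszozp} with $\dim K_\varphi=1$). Your explicit observation that $S_{z_1}=S_{z_2}$ on $\HM^\perp$ because $(z_1-z_2)H^2(\D^2)\subseteq[z_1-z_2]\subseteq\HM$ correctly supplies the $i=2$ case, which the paper leaves implicit.
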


We need the following theorem from \cite{GRS05} to prove Theorem \ref{hlsmnmcdv2}. For $f \in H^2(\D^2)$, we write $Z(f) = \{\lambda \in \D^2: f(\lambda) = 0\}$ and $Z(\HM) = \bigcap_{f\in \HM} Z(f)$.
\begin{theorem}[\cite{GRS05}]\label{GRS05np}
If a submodule $\HM$ of $H^2(\D^2)$ contains a nonzero bounded function $\varphi$, then
$$\sigma_e(R) \cap \D^2 \subseteq Z(\varphi)$$
and for every $\lambda \in \D^2 \setminus \sigma_e(R)$ the pair $R - \lambda$ has Fredholm index 1. In fact, for all $\lambda \in \D^2 \setminus Z(\varphi)$ we have
$$\dim \HM /((z_1-\lambda_1)\HM + (z_2-\lambda_2)\HM) = 1.$$
\end{theorem}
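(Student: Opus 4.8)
The plan is to fix $\lambda\in\D^2\setminus Z(\varphi)$, i.e. with $\varphi(\lambda)\neq 0$, and to show directly that the Koszul complex $K(R-\lambda)$ has vanishing cohomology in degrees $0$ and $1$ and one-dimensional cohomology in degree $2$; the essential-spectrum statement and the value of the index then drop out. I would begin by recording two structural facts. First, since $\HM$ is closed and invariant under $M_{z_1},M_{z_2}$, it is invariant under $M_\psi$ for every $\psi\in H^\infty(\D^2)$: approximate $\psi$ by its dilations $\psi(r\cdot)$, each of which is a uniform limit of polynomials, and pass to a weak limit in $\HM$. Consequently $\varphi H^2(\D^2)\subseteq\HM$, because $\varphi p\in\HM$ for every polynomial $p$ and such products are dense in $\varphi H^2(\D^2)$. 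Second, writing
$$\varphi(z)-\varphi(\lambda)=(z_1-\lambda_1)a(z)+(z_2-\lambda_2)b(z),\qquad a=\frac{\varphi(z_1,z_2)-\varphi(\lambda_1,z_2)}{z_1-\lambda_1},\quad b=\frac{\varphi(\lambda_1,z_2)-\varphi(\lambda)}{z_2-\lambda_2},$$
the Schwarz--Pick inequality applied on each one-variable slice gives $\|a\|_\infty,\|b\|_\infty\le\frac{2\|\varphi\|_\infty}{1-\max_i|\lambda_i|}$, so $a,b\in H^\infty(\D^2)$, and hence $(\varphi-\varphi(\lambda))\HM\subseteq(z_1-\lambda_1)\HM+(z_2-\lambda_2)\HM$ by the first fact.

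The core of the argument is a division step. Suppose $g\in\HM$ with $g(\lambda)=0$. Then $g=(z_1-\lambda_1)g_1+(z_2-\lambda_2)g_2$ with $g_1,g_2\in H^2(\D^2)$, since division by $z_i-\lambda_i$ of a function vanishing on $\{z_i=\lambda_i\}$ preserves $H^2$ (with norm control, as $M_{z_i}-\lambda_i$ is bounded below). Therefore $\varphi g=(z_1-\lambda_1)(\varphi g_1)+(z_2-\lambda_2)(\varphi g_2)\in(z_1-\lambda_1)\HM+(z_2-\lambda_2)\HM$, because $\varphi g_1,\varphi g_2\in\varphi H^2(\D^2)\subseteq\HM$. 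Subtracting the relation from the previous paragraph, $\varphi(\lambda)g=\varphi g-(\varphi-\varphi(\lambda))g$ also lies in $(z_1-\lambda_1)\HM+(z_2-\lambda_2)\HM$; dividing by the scalar $\varphi(\lambda)\neq 0$ gives $g\in(z_1-\lambda_1)\HM+(z_2-\lambda_2)\HM$. Thus the functional $[g]\mapsto g(\lambda)$ on $\HM/((z_1-\lambda_1)\HM+(z_2-\lambda_2)\HM)$ is injective and is nonzero on $[\varphi]$, so this quotient is exactly one-dimensional, which in turn forces $(z_1-\lambda_1)\HM+(z_2-\lambda_2)\HM$ to be closed (a finite-codimensional range of a bounded operator is closed).

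Next I would clear the remaining cohomology. Degree $0$ is trivial, since $M_{z_i}-\lambda_i$ is injective. For degree $1$, recall $\ker\partial_{R-\lambda}^1=\{(f,g):(z_1-\lambda_1)g=(z_2-\lambda_2)f\}$ is carried, by the factorization in $\mathcal O(\D^2)$, to $\{(z_1-\lambda_1)h,(z_2-\lambda_2)h):h\in H^2(\D^2),\ (z_i-\lambda_i)h\in\HM\}$, while $\ran\partial_{R-\lambda}^0$ corresponds to the same set with $h\in\HM$; so it suffices to show that $(z_1-\lambda_1)h,(z_2-\lambda_2)h\in\HM$ implies $h\in\HM$. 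The same bookkeeping works: $\varphi h\in\HM$, and $(\varphi-\varphi(\lambda))h=(z_1-\lambda_1)(ah)+(z_2-\lambda_2)(bh)$ with $(z_i-\lambda_i)(a h)=a\,(z_i-\lambda_i)h\in\HM$ and likewise for $b$, so $\varphi(\lambda)h=\varphi h-(\varphi-\varphi(\lambda))h\in\HM$, whence $h\in\HM$. Thus $\ker\partial_{R-\lambda}^1=\ran\partial_{R-\lambda}^0$, and since $\partial_{R-\lambda}^0$ is bounded below this space is closed. Hence $R-\lambda$ is a Fredholm pair with
$$\ind(R-\lambda)=\dim(\ker\partial_{R-\lambda}^0)-\dim(\ker\partial_{R-\lambda}^1/\ran\partial_{R-\lambda}^0)+\dim\HM/((z_1-\lambda_1)\HM+(z_2-\lambda_2)\HM)=0-0+1=1.$$
In particular $\lambda\notin\sigma_e(R)$, so $\sigma_e(R)\cap\D^2\subseteq Z(\varphi)$, and the index is $1$ on all of $\D^2\setminus Z(\varphi)$.

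Finally, for $\lambda\in(\D^2\setminus\sigma_e(R))\cap Z(\varphi)$, the Fredholm index of a commuting pair is locally constant on the complement of the essential Taylor spectrum, and $Z(\varphi)$ is a proper analytic subvariety of $\D^2$, hence nowhere dense; so every neighbourhood of $\lambda$ meets $\D^2\setminus Z(\varphi)$, where the index is $1$, and therefore $\ind(R-\lambda)=1$ there too. The only points that need genuine care are the two structural facts of the first paragraph: the upgrade from polynomial-invariance to $H^\infty$-invariance of $\HM$ via a weak-limit argument, and the boundedness of the difference quotients $a,b$, which is precisely the Schwarz--Pick estimate $\bigl|\tfrac{\psi(z)-\psi(\mu)}{z-\mu}\bigr|\le\tfrac{2\|\psi\|_\infty}{1-|\mu|}$ for $\psi\in H^\infty(\D)$; once these are in place the rest is bookkeeping.
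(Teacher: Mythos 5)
The paper offers no proof of this statement; it is imported wholesale from \cite{GRS05}, where it is proved for invariant subspaces of rather general spaces of analytic functions in several variables. Your argument is a correct, self-contained derivation in the bidisk setting, and it runs along the same circle of ideas as the source (codimension one via the bounded function $\varphi$), so it is a legitimate replacement rather than a new route. Each of the points you flag as needing care does check out: the upgrade from polynomial invariance to $H^\infty(\D^2)$-invariance of $\HM$ via dilations, uniform polynomial approximation of $\psi(r\cdot)$, and weak closedness of $\HM$; the decomposition $\varphi-\varphi(\lambda)=(z_1-\lambda_1)a+(z_2-\lambda_2)b$ with $a,b\in H^\infty(\D^2)$, where the bound on the difference quotient $|\frac{\psi(z)-\psi(\mu)}{z-\mu}|\le \frac{2\|\psi\|_\infty}{1-|\mu|}$ follows from the maximum principle applied to the boundary values (calling it Schwarz--Pick is a slight misnomer, but the estimate is right); and the slice-wise $H^2$-division by $z_i-\lambda_i$, valid because $M_{z_i}-\lambda_i$ is bounded below on each slice with a constant depending only on $|\lambda_i|$. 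The remaining steps are also sound: $(z_1-\lambda_1)\HM+(z_2-\lambda_2)\HM$ is exactly the kernel of the (bounded) evaluation at $\lambda$ restricted to $\HM$, hence closed of codimension one since $\varphi\in\HM$ does not vanish at $\lambda$; the degree-one cohomology vanishes because $(z_1-\lambda_1)g=(z_2-\lambda_2)f$ forces $f$ to vanish on the slice $\{z_1=\lambda_1\}$, giving the common factor $h\in H^2(\D^2)$, and your $\varphi$-trick shows $h\in\HM$; $\ran\partial_{R-\lambda}^0$ is closed because $\partial_{R-\lambda}^0$ is bounded below; and the index value $1$ propagates to $\lambda\in Z(\varphi)\setminus\sigma_e(R)$ by local constancy of the Taylor--Fredholm index together with the fact that $Z(\varphi)\cap\D^2$ is a proper analytic subset, hence nowhere dense. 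I see no gap.
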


Now we can prove the following theorem.
\begin{theorem}\label{hlsmnmcdv2}
Let $\HM \in Lat(H^2(\D^2))$ contain $z_1 - z_2$. The following are equivalent.\\
(i) $\ind_{(0,0)}\HM < \infty$.\\
(ii) $\HM$ is a Hilbert-Schmidt submodule.\\
(iii) $[R_{z_1}^*,R_{z_2}]$ is compact.\\
(iv) $F_\lambda$ is a semi-Fredholm operator for some $\lambda = (\lambda_0, \lambda_0) \in \D^2$.
\end{theorem}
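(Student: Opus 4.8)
The plan is to establish the cycle of implications $(i) \Rightarrow (ii) \Rightarrow (iii) \Rightarrow (iv) \Rightarrow (i)$, drawing on the machinery assembled in Sections 2 and 3.1.

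\textbf{$(i) \Rightarrow (ii)$.} This is immediate from Theorem \ref{hlsmnmcdv} applied with $\varphi(z_2) = z_2$: when $\HM$ contains $z_1 - z_2$, the condition $\ind_{(0,0)}\HM < \infty$ is, by Lemma \ref{rbhanthbs} and the discussion following it, equivalent to $\HN = V\HM$ being finitely generated, hence to $\HM$ being finitely generated, and Theorem \ref{hlsmnmcdv} then gives Hilbert-Schmidtness. Alternatively one can invoke Lemma \ref{untebsamr}(ii) to see $\sigma_e(S_{z_1}) \subseteq \T$ and apply Theorem \ref{grshcm} directly.

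\textbf{$(ii) \Rightarrow (iii)$.} This is part of the standard structure theory recalled in the introduction: if $C$ is Hilbert-Schmidt then, via the unitary equivalence of $C^2$ displayed in Section 1, both $[R_{z_1}^*,R_{z_1}][R_{z_2}^*,R_{z_2}]$ and $[R_{z_1}^*,R_{z_2}]$ are Hilbert-Schmidt, in particular $[R_{z_1}^*,R_{z_2}]$ is compact.

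\textbf{$(iii) \Rightarrow (iv)$.} If $[R_{z_1}^*,R_{z_2}]$ is compact, then Proposition \ref{indeve} (or directly the remark following Proposition \ref{rlspoaci}) shows that $F_\lambda$ is left semi-Fredholm for every $\lambda \in \D^2$, in particular for any diagonal point $\lambda = (\lambda_0,\lambda_0)$. To upgrade ``left semi-Fredholm'' to ``semi-Fredholm'' it suffices to note that $\ker F_\lambda^* = \HM \ominus [(z_1-\lambda_1)\HM + (z_2-\lambda_2)\HM]$ has dimension $\ind_\lambda \HM$, which by Theorem \ref{GRS05np} (using that $\HM$ contains the bounded nonzero function $z_1 - z_2$) equals $1$ for all $\lambda \in \D^2 \setminus Z(z_1-z_2)$; since a diagonal point lies off $Z(z_1 - z_2)$ only when $\lambda_0 \neq \lambda_0$, which never happens, one instead argues that $\ind_\lambda\HM < \infty$ for all $\lambda$ when $[R_{z_1}^*,R_{z_1}][R_{z_2}^*,R_{z_2}]$ is compact --- but compactness of $[R_{z_1}^*,R_{z_2}]$ alone already forces, via Propositions \ref{rlspoaci} and \ref{srsfgl}, that $\widetilde{F_\lambda}$ and $\widetilde{G_\lambda}$ have closed range and finite-dimensional cokernel, so $F_\lambda$ is semi-Fredholm. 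I expect this is where care is needed; the cleanest route is probably to combine the left semi-Fredholmness from Proposition \ref{indeve} with the isomorphism $\ker F_\lambda \cong \ker \partial_{S-\lambda}^0$ of Lemma \ref{isosps} and the identity $S_{z_1} \sim S_\HN$ established in Lemma \ref{espfszozp}, to read off finiteness of the cokernel from Lemma \ref{sFosnzcp}.

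\textbf{$(iv) \Rightarrow (i)$.} Suppose $F_{\lambda}$ is semi-Fredholm for $\lambda = (\lambda_0,\lambda_0)$. By Lemma \ref{isosps}, $\ker F_\lambda$ is isomorphic to $\ker \partial_{S-\lambda}^0 = \ker S_{\varphi_{\lambda_0}} \cap \ker S_{\varphi_{\lambda_0}}$, and via the unitary equivalence $S_{z_1} \sim S_\HN$ (Lemma \ref{espfszozp}), semi-Fredholmness of $F_\lambda$ transfers to $S_\HN - \lambda_0$ being semi-Fredholm with finite-dimensional kernel and cokernel, i.e. Fredholm. By Lemma \ref{sFosnzcp} (with $\varphi(z_2) = z_2$, so $\dim K_\varphi = 1$), this forces $\dim(\HN \ominus z\HN) < \infty$, that is $\ind \HN < \infty$. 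Finally, Lemma \ref{rbhanthbs} gives $\ind_{(0,0)}\HM \leq \ind \HN + 1 < \infty$, which is $(i)$.

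The main obstacle, as indicated, is the bookkeeping in $(iii)\Rightarrow(iv)$: one must be careful to extract genuine semi-Fredholmness (finite-dimensional cokernel) and not merely the left semi-Fredholm property that Proposition \ref{indeve} delivers directly. The resolution is to route through the quotient-module picture: compactness of $[R_{z_1}^*,R_{z_2}]$ together with Lemma \ref{clrfsnczp} and Lemma \ref{sFosnzcp} show $S_\HN - \lambda_0$ has closed range and its defect spaces are controlled, and then the unitary equivalence $S_{z_1}\sim S_\HN$ closes the loop back to $F_\lambda$.
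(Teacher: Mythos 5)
Your cycle of implications is the same as the paper's, and your $(i)\Rightarrow(ii)$ and $(ii)\Rightarrow(iii)$ are fine (the second alternative you give for $(i)\Rightarrow(ii)$, via Lemma \ref{untebsamr}(ii) and Theorem \ref{grshcm}, is exactly the paper's argument). The problems are in the last two implications. In $(iii)\Rightarrow(iv)$ you manufacture a difficulty that is not there and then ``resolve'' it with a false claim. Semi-Fredholm means closed range together with finite dimensionality of \emph{either} the kernel \emph{or} the cokernel; left semi-Fredholm is already semi-Fredholm, so Proposition \ref{indeve} gives (iv) outright, which is all the paper does. Your attempted upgrade to a finite-dimensional cokernel is both unnecessary and unobtainable from (iii) alone: by Proposition \ref{rlspoaci}(ii) the defect $I-\widetilde{F_\lambda}\widetilde{F_\lambda}^*$ equals $[R_{\varphi_{\lambda_1}}^*,R_{\varphi_{\lambda_1}}][R_{\varphi_{\lambda_2}}^*,R_{\varphi_{\lambda_2}}]$, which is governed by the product of self-commutators, not by $[R_{z_1}^*,R_{z_2}]$; moreover $\ker F_\lambda^*=\HM\ominus[(z_1-\lambda_1)\HM+(z_2-\lambda_2)\HM]$ has dimension $\ind_\lambda\HM$, which at a diagonal point is precisely the quantity whose finiteness is the content of (i), so deriving it directly from (iii) would short-circuit the theorem. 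Similarly, ``reading off finiteness of the cokernel from Lemma \ref{sFosnzcp}'' presupposes $\dim(\HN\ominus(I\otimes M_z)\HN)<\infty$, which is not yet known at that stage.

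In $(iv)\Rightarrow(i)$ the sentence asserting that semi-Fredholmness of $F_\lambda$ ``transfers to $S_\HN-\lambda_0$ being semi-Fredholm with finite-dimensional kernel and cokernel, i.e.\ Fredholm'' is a genuine gap: (iv) only guarantees that \emph{one} of $\ker F_\lambda$, $\ker F_\lambda^*$ is finite-dimensional, and if it were the cokernel while $\dim\ker F_\lambda=\infty$, then $\ker(S_\HN-\lambda_0)$ would be infinite-dimensional and $S_\HN-\lambda_0$ would not be Fredholm. The repair is a two-case argument. If $\dim\ker F_\lambda^*<\infty$, then $\ind_{(\lambda_0,\lambda_0)}\HM<\infty$ immediately and Lemma \ref{rbhanthbs} finishes. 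If $\dim\ker F_\lambda<\infty$, then Lemma \ref{isosps} together with the unitary equivalence $S_{z_1}\cong S_\HN$ gives $\dim\ker(S_\HN-\lambda_0)<\infty$; since $S_\HN-\lambda_0$ always has closed range (Lemma \ref{clrfsnczp}) and cokernel of dimension at most $\dim K_\varphi=1$, it is then Fredholm, so Lemma \ref{sFosnzcp} forces $\ind\HN<\infty$ and Lemma \ref{rbhanthbs} finishes. With that repair your route through the Bergman-space model is a legitimate alternative to the paper's argument, which instead uses Theorem \ref{GRS05np} and (\ref{rlbindofr}) to see that $F_{(\lambda_0,\mu)}$ is Fredholm of index $-1$ for $\mu\neq\lambda_0$ and then invokes the local constancy of the semi-Fredholm index along $\mu\mapsto F_{(\lambda_0,0)}-\mu$ to conclude that $F_{(\lambda_0,\lambda_0)}$ is Fredholm, hence has finite-dimensional cokernel.
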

\begin{proof}
(i) implies (ii). If $\ind_{(0,0)}\HM < \infty$, then Lemma \ref{untebsamr} ensures that $\sigma_e(S_{z_i}) \subseteq \T$. It then follows from Theorem \ref{grshcm} that $\HM$ is a Hilbert-Schmidt submodule.\\
(ii) implies (iii). This follows from definition.\\
(iii) implies (iv). If $[R_{z_1}^*, R_{z_2}]$ is compact, then Proposition \ref{indeve} asserts that $F_\lambda$ is a semi-Fredholm operator for all $\lambda \in \D^2$.\\
(iv) implies (i). Suppose $F_\lambda$ is semi-Fredholm for some $\lambda = (\lambda_0, \lambda_0)$. Note that Theorem \ref{GRS05np} and (\ref{rlbindofr}) imply that for $\lambda = (\lambda_1,\lambda_2) \in \D^2$ with $\lambda_1 \neq \lambda_2$, $1 = \ind (R - \lambda) = -\ind F_\lambda$. It thus follows that $F_{(\lambda_0, \lambda_0)}$ is Fredholm. So $\dim \ker F_{(\lambda_0, \lambda_0)}^* = \ind_{(\lambda_0, \lambda_0)} \HM < \infty$. Then Lemma \ref{rbhanthbs} ensures that $\ind_{\lambda}\HM < \infty$, $\forall \lambda \in \D^2$ with $\lambda_1 = \lambda_2$. In particular, $\ind_{(0,0)}\HM < \infty$. The proof is complete.
\end{proof}
The equivalence of (i) and (iv) in the above theorem generalizes Theorem 2.9 in \cite{III}.
\begin{corollary}\label{hlsmmcdvc}
Let $\HM \in Lat(H^2(\D^2))$ contain $z_1 - z_2$. Then $\HM$ is a Hilbert-Schmidt submodule if and only if $\sigma_e(S_{z_1})  \neq \overline{\D}$.
\end{corollary}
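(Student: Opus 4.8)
The plan is to read this off directly from Theorem \ref{hlsmnmcdv2} and Lemma \ref{untebsamr}, since together they reduce the whole statement to the dichotomy $\ind_{(0,0)}\HM < \infty$ versus $\ind_{(0,0)}\HM = \infty$. The key observation to keep in mind throughout is that this dichotomy is exhaustive, so the two cases of Lemma \ref{untebsamr} cover everything.

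For the forward implication I would start by assuming $\HM$ is a Hilbert-Schmidt submodule. Then the equivalence (ii)$\Rightarrow$(i) in Theorem \ref{hlsmnmcdv2} gives $\ind_{(0,0)}\HM < \infty$, and Lemma \ref{untebsamr}(ii) then yields $\sigma_e(S_{z_1}) \subseteq \T$. Since $\T \subsetneq \overline{\D}$, this forces $\sigma_e(S_{z_1}) \neq \overline{\D}$, as desired.

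For the converse I would argue by contraposition. Suppose $\HM$ is not Hilbert-Schmidt. Then the equivalence (i)$\Leftrightarrow$(ii) in Theorem \ref{hlsmnmcdv2} forces $\ind_{(0,0)}\HM = \infty$, and Lemma \ref{untebsamr}(i) gives $\sigma_e(S_{z_1}) = \overline{\D}$. Hence $\sigma_e(S_{z_1}) \neq \overline{\D}$ implies $\HM$ is Hilbert-Schmidt, completing the equivalence.

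There is no genuine obstacle here; the statement is a packaging of already-established facts. The only minor points worth noting are that $\ind_{(0,0)}\HM$ is always either a finite nonnegative integer or $\infty$, so Lemma \ref{untebsamr} applies in all cases, and that $\sigma_e(S_{z_1})$ is in any event a nonempty closed subset of $\overline{\D}$ (it is the essential spectrum of the contraction $S_{z_1}$), so the condition $\sigma_e(S_{z_1}) \neq \overline{\D}$ is precisely the negation of the equality in Lemma \ref{untebsamr}(i). One could equally phrase the whole proof using statement (iii) or (iv) of Theorem \ref{hlsmnmcdv2} in place of (ii), but routing through (i) is the most direct since Lemma \ref{untebsamr} is stated in terms of $\ind_{(0,0)}\HM$.
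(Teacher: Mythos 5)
Your proof is correct and matches the paper's in substance: both directions reduce to the dichotomy $\ind_{(0,0)}\HM<\infty$ versus $\ind_{(0,0)}\HM=\infty$ via Theorem \ref{hlsmnmcdv2} and Lemma \ref{untebsamr}. The only cosmetic difference is that the paper gets the ``if'' direction in one step from Theorem \ref{grshcm} (since $\sigma_e(S_{z_1})$ is closed, $\sigma_e(S_{z_1})\neq\overline{\D}$ forces $\D\not\subseteq\sigma_e(S_{z_1})$), whereas you obtain it by contraposition through Lemma \ref{untebsamr}(i); both are valid applications of already-established results.
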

\begin{proof}
If $\sigma_e(S_{z_1})  \neq \overline{\D}$, then by Theorem \ref{grshcm}, we conclude that $\HM$ is a Hilbert-Schmidt submodule. Conversely, if $\HM$ is a Hilbert-Schmidt submodule, then by Theorem \ref{hlsmnmcdv} and Lemma \ref{untebsamr}, we get the assertion.
\end{proof}

The following result characterizes the Fredholmness of the pairs $R - \lambda$ and $S - \lambda$ for $\lambda \in \D^2$.
\begin{prop}\label{fhnotrs}
Let $\HM \in Lat(H^2(\D^2))$ contain $z_1 - z_2$. Then the following are equivalent.\\
(i) $\ind_{(0,0)}\HM < \infty$.\\
(ii) $\forall \lambda \in \D^2$ the pair $R - \lambda$ is Fredholm with index 1.\\
(iii) $\forall \lambda \in \D^2$ the pair $S - \lambda$ is Fredholm with index 0.
\end{prop}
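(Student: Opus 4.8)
The plan is to prove the two equivalences (i)$\Leftrightarrow$(ii) and (i)$\Leftrightarrow$(iii) separately, dispatching the cheap implications (ii)$\Rightarrow$(i) and (iii)$\Rightarrow$(i) first. For (ii)$\Rightarrow$(i), specialize (ii) to $\lambda=(0,0)$: then $R$ is a Fredholm pair, so $\ran \partial_R^1 = z_1\HM+z_2\HM$ is closed and $\HM/(z_1\HM+z_2\HM)$ is finite dimensional; since $\ind_{(0,0)}\HM=\dim\bigl(\HM\ominus(z_1\HM+z_2\HM)\bigr)$ equals this quantity once the range is closed, (i) follows. For (iii)$\Rightarrow$(i), the key point is that $z_1-z_2\in\HM$ forces $(z_1-z_2)H^2(\D^2)\subseteq\HM$ (multiply the generator by polynomials and close up, using that $M_{z_1-z_2}$ is bounded), hence $S_{z_1}=S_{z_2}$ on $\HM^\perp$; so $S$ at $(0,0)$ is the pair $(S_{z_1},S_{z_1})$, whose Fredholmness forces $S_{z_1}$ itself to be Fredholm, i.e.\ $0\notin\sigma_e(S_{z_1})$, so $\sigma_e(S_{z_1})\neq\overline{\D}$ and Lemma \ref{untebsamr}(i) yields (i).

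For (i)$\Rightarrow$(ii), I would rerun the mechanism behind the implication ``(iv)$\Rightarrow$(i)'' of Theorem \ref{hlsmnmcdv2}. Granting (i), Theorem \ref{hlsmnmcdv2} gives that $[R_{z_1}^*,R_{z_2}]$ is compact, so by Proposition \ref{indeve} every fringe operator $F_\lambda$, $\lambda\in\D^2$, is left semi-Fredholm and $\ind F_\lambda$ is independent of $\lambda$. Since $z_1-z_2$ is a nonzero bounded function in $\HM$ whose zero set meets $\D^2$ exactly in the diagonal $\{\lambda_1=\lambda_2\}$, Theorem \ref{GRS05np} gives $\ind(R-\lambda)=1$ for all $\lambda\in\D^2$ with $\lambda_1\neq\lambda_2$; by (\ref{rlbindofr}) this is $\ind F_\lambda=-1$ there, hence $\ind F_\lambda\equiv-1$ on all of $\D^2$. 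A left semi-Fredholm operator of finite index is Fredholm, so each $F_\lambda$ is Fredholm, therefore $R-\lambda$ is Fredholm for every $\lambda\in\D^2$ and $\ind(R-\lambda)=-\ind F_\lambda=1$ again by (\ref{rlbindofr}). That is (ii).

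For (i)$\Rightarrow$(iii) I use $S_{z_1}=S_{z_2}$ again, so $S-\lambda=(S_{z_1}-\lambda_1,\,S_{z_1}-\lambda_2)$ for every $\lambda\in\D^2$. An elementary change of generators identifies the Koszul complex of a commuting pair $(A-\lambda_1,A-\lambda_2)$ with that of $(A-\lambda_1,(\lambda_1-\lambda_2)I)$; from this one reads off that when $\lambda_1\neq\lambda_2$ the complex is exact (one entry is an invertible scalar), while when $\lambda_1=\lambda_2=\mu$ it splits and is Fredholm precisely when $A-\mu$ is, with index $0$ in either case. Taking $A=S_{z_1}$ and invoking Lemma \ref{untebsamr}(ii) --- which under (i) gives $\sigma_e(S_{z_1})\subseteq\T$, hence $S_{z_1}-\mu$ Fredholm for all $\mu\in\D$ --- we conclude that $S-\lambda$ is Fredholm of index $0$ for every $\lambda\in\D^2$, which is (iii).

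The step that will take the most care is the diagonal $\{\lambda_1=\lambda_2\}$ in the $R$-statement: Theorem \ref{GRS05np} only supplies Fredholmness of $R-\lambda$ off the zero set of $z_1-z_2$, and I expect to bridge the diagonal solely through the constancy of the fringe index in Proposition \ref{indeve} together with the elementary fact that a left semi-Fredholm operator of finite index is Fredholm, so that the finite off-diagonal value $-1$ propagates to the diagonal. By contrast, the $S$-side becomes routine once the identity $S_{z_1}=S_{z_2}$ and the two-entry Koszul computation are in place; the only point to check carefully there is the inclusion $(z_1-z_2)H^2(\D^2)\subseteq\HM$ that underlies $S_{z_1}=S_{z_2}$.
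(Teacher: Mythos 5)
Your proposal is correct, but it takes a genuinely different route from the paper's. The paper disposes of (ii)$\Rightarrow$(i) and (iii)$\Rightarrow$(i) by funnelling both through Lemma \ref{isosps} and the equivalence (iv)$\Leftrightarrow$(i) of Theorem \ref{hlsmnmcdv2}, whereas you read (ii)$\Rightarrow$(i) directly off the definition of a Fredholm pair at $\lambda=(0,0)$ and get (iii)$\Rightarrow$(i) from the identity $S_{z_1}=S_{z_2}$ together with Lemma \ref{untebsamr}(i); both are sound, and yours is arguably more elementary. For the forward direction the paper invokes Theorem \ref{hlsmnmcdv2} to get Hilbert--Schmidtness, cites \cite{Ya01,Ya06} for the fact that $R-\lambda$ and $S-\lambda$ are then Fredholm pairs for all $\lambda\in\D^2$, and obtains the index values on the diagonal implicitly from the off-diagonal values of \cite{GRS05} via local constancy of the Taylor index. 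You replace the external citation and the implicit continuity step by machinery internal to the paper: for $R$, compactness of $[R_{z_1}^*,R_{z_2}]$ plus the constancy of $\ind F_\lambda$ from Proposition \ref{indeve} and the identity (\ref{rlbindofr}) propagate the finite value $-1$ across the diagonal; for $S$, the reduction of the degenerate pair $(S_{z_1}-\lambda_1,S_{z_1}-\lambda_2)$ by a $GL_2(\C)$ change of generators to $(S_{z_1}-\lambda_1,(\lambda_1-\lambda_2)I)$ turns the question into single-operator Fredholmness of $S_{z_1}-\mu$, which Lemma \ref{untebsamr}(ii) supplies. The trade-off is that your argument is longer but self-contained and makes the diagonal crossing explicit, while the paper's is shorter at the cost of leaning on results quoted from elsewhere; your Koszul computation for the $S$-pair also exposes clearly why the index must be $0$ on the diagonal rather than deducing it from continuity. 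All the individual steps you use (the inclusion $(z_1-z_2)H^2(\D^2)\subseteq\HM$, the fact that a left semi-Fredholm operator of finite index is Fredholm, and the $GL_2$ invariance of the Koszul complex) are standard and correctly applied.
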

\begin{proof}
By Lemma \ref{isosps} and Theorem \ref{hlsmnmcdv2}, we see that (ii) implying (i) and (iii) implying (i) hold. It is left to show that (i) implies (ii) and (iii). If $\ind_{(0,0)}\HM < \infty$, then Theorem \ref{hlsmnmcdv2} ensures that $\HM$ is a Hilbert-Schmidt submodule. Thus $R - \lambda$ and $S - \lambda$ are Fredholm pairs for $\lambda \in \D^2$ (\cite{Ya01, Ya06}). Since for $\lambda \in \D^2$ with $\lambda_1 \neq \lambda_2$, we have $R - \lambda$ and $S - \lambda$ are Fredholm with index $1$ and $0$, respectively (\cite{GRS05}). The assertion follows from this.
\end{proof}
Let $\HM \in Lat(H^2(\D^2))$ contain $z_1 - z_2$. It is proved in \cite{LR} that $\ran \partial_{R-\lambda}^1$ is closed for $\lambda \in \D^2$. It is also proved in \cite[Lemma 2.6]{III} that $\ran \partial_{R}^1 = z_1 \HM + z_2 \HM$ is closed. We use a similar argument as in \cite[Lemma 2.6]{III} to prove the closedness of $\ran \partial_{R-\lambda}^1$ in the following. Note that this result holds even when $\ind_{(0,0)}\HM = \infty$.
\begin{lemma}\label{clsfdrd}
Let $\HM \in Lat(H^2(\D^2))$ contain $z_1 - z_2$ and $\HN = V \HM$. Then for $\lambda = (\lambda_0,\lambda_0) \in \D^2$, $\ran \partial_{R-\lambda}^1 = (z_1 - \lambda_0)\HM + (z_2 - \lambda_0)\HM$ is closed.
\end{lemma}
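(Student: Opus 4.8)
The plan is to reduce the closedness of $\ran \partial_{R-\lambda}^1 = (z_1 - \lambda_0)\HM + (z_2 - \lambda_0)\HM$ to a statement about the Bergman-shift model space $L^2_a(\D)$ via the partial isometry $V$, mirroring the argument in \cite[Lemma 2.6]{III} for the case $\lambda_0 = 0$. First I would fix $\lambda = (\lambda_0, \lambda_0)$ and recall that $\HM = \tau(\HN) = V^*\HN + \ker V$ with $\ker V = [z_1 - z_2] = \HM_0$, and that $P_{\HM_0^\perp} M_{z_1}|_{\HM_0^\perp} V^* = V^* M_z$. The key decomposition is
$$(z_1 - \lambda_0)\HM + (z_2 - \lambda_0)\HM = \HM_0 + V^*\big((z - \lambda_0)\HN\big) + \big[(z_1 - \lambda_0) - (z_2 - \lambda_0)\big]\HM,$$
but since $(z_1 - \lambda_0) - (z_2 - \lambda_0) = z_1 - z_2$ already lies in $\HM_0$, this collapses: on $\HM = V^*\HN \oplus \HM_0$ one checks that $(z_1 - \lambda_0)\HM + (z_2 - \lambda_0)\HM = V^*\big((z-\lambda_0)\HN\big) + \HM_0$. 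Indeed for $f \in \HM$, writing $f = V^*h + k$ with $h \in \HN$, $k \in \HM_0$, both $(z_i - \lambda_0)k$ lie in $\HM_0$ (as $\HM_0$ is a submodule) and $(z_i - \lambda_0)V^*h = V^*(z-\lambda_0)h$ modulo $\HM_0$, using the model identity above together with $M_{z_1} - M_{z_2}$ mapping into $\HM_0$.

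Next I would invoke the fact that $(z - \lambda_0)\HN$ is closed in $L^2_a(\D)$: this follows because $M_z - \lambda_0$ on $L^2_a(\D)$ is bounded below on $\HN$ — more precisely, since $\HN \in Lat(M_z, L^2_a(\D))$ and $\lambda_0 \in \D$, the operator $(M_z - \lambda_0)|_{\HN}$ is injective with closed range (it is left-invertible, being a restriction of the analytic Toeplitz operator $M_{z - \lambda_0}$ which is bounded below by $\mathrm{dist}(\lambda_0, \T) > 0$ on all of $L^2_a(\D)$, as $M_{z-\lambda_0}^* M_{z-\lambda_0} \geq (1-|\lambda_0|)^2 I$ fails in general, so instead I would use that $|z - \lambda_0| \geq 1 - |\lambda_0|$ a.e. on $\T$ and a Bergman-space norm estimate, or simply cite \cite{ARS96} / standard multiplier theory giving $\|(M_z - \lambda_0)g\| \geq c\|g\|$ for $g \in L^2_a$). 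Hence $V^*((z-\lambda_0)\HN)$ is a closed subspace of $\HM_\varphi^\perp$-type, and $\HM_0 = \ker V$ is closed with $V^*((z-\lambda_0)\HN) \perp \HM_0$ in the sense that $V^*((z-\lambda_0)\HN) \subseteq \ran V^* = \HM_0^\perp$ restricted appropriately — actually $V^*L^2_a(\D) = \HM_0^\perp$, so the sum $V^*((z-\lambda_0)\HN) \oplus \HM_0$ is an orthogonal direct sum of two closed subspaces and is therefore closed.

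The main obstacle I anticipate is the algebraic identity $(z_1 - \lambda_0)\HM + (z_2 - \lambda_0)\HM = V^*((z - \lambda_0)\HN) + \HM_0$: the inclusion $\supseteq$ requires producing, for $h \in (z-\lambda_0)\HN$, an explicit $f \in \HM$ with $(z_1 - \lambda_0)f$ or a combination landing on $V^*h$ modulo $\HM_0$, which uses that $V$ intertwines and that $V V^* = I$; the inclusion $\subseteq$ requires the commutation of $V^*$ with $M_{z_i} - \lambda_0$ up to $\ker V$, i.e. $V M_{z_i} V^* = M_z$ on $L^2_a(\D)$ for $i = 1, 2$ (both coordinates collapse to the diagonal under $V$), which is exactly the content of $Vf(\lambda) = f(\lambda,\lambda)$. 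Once this identity is in hand, closedness is immediate from the orthogonal-sum observation, so the proof is short. I would also remark that the same argument shows $\ran \partial_{R-\lambda}^1$ is closed for arbitrary $\lambda \in \D^2$ by combining with the already-cited result of \cite{LR}, but for the stated diagonal case the self-contained argument above suffices.
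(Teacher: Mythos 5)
There is a genuine gap: your key algebraic identity $(z_1 - \lambda_0)\HM + (z_2 - \lambda_0)\HM = V^*\bigl((z-\lambda_0)\HN\bigr) + \HM_0$ is false in general; only the inclusion $\subseteq$ (which you verify correctly via $VV^*=I$) holds. The reverse inclusion would require $\HM_0 \subseteq (z_1-\lambda_0)\HM + (z_2-\lambda_0)\HM$, in particular $z_1 - z_2 \in (z_1-\lambda_0)\HM + (z_2-\lambda_0)\HM$, and this fails: take $\HM = [z_1-z_2]$ itself, so that $\HN = \{0\}$ and your right-hand side is all of $\HM_0$ while the left-hand side has codimension $\ind_{(\lambda_0,\lambda_0)}[z_1-z_2] = 1$ in it; or take $\HM = \{f : f(\lambda_0,\lambda_0)=0\}$, where every element of $(z_1-\lambda_0)\HM+(z_2-\lambda_0)\HM$ vanishes to order at least $2$ at $(\lambda_0,\lambda_0)$ but $z_1-z_2$ vanishes only to order $1$. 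Your sketch of $\supseteq$ only produces $V^*h$ \emph{plus some particular element} of $\HM_0$, which is not the same as absorbing all of $\HM_0$; you correctly identify this step as the main obstacle but then assert the identity anyway, and the closedness conclusion does not follow from the one true inclusion alone.

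The paper's proof repairs exactly this one-dimensional defect with a sandwich argument. Let $L_0 = (z_1-\lambda_0)[z_1-z_2] + (z_2-\lambda_0)[z_1-z_2]$; this is closed because $[z_1-z_2]$, being generated by a polynomial, is a Hilbert--Schmidt submodule, and it has codimension one in $[z_1-z_2]$ since $[z_1-z_2] = L_0 + \C(z_1-z_2)$. One then shows
\[
V^*(z-\lambda_0)\HN \oplus L_0 \;\subseteq\; (z_1 - \lambda_0)\HM + (z_2 - \lambda_0)\HM \;\subseteq\; V^*(z-\lambda_0)\HN \oplus [z_1-z_2],
\]
and since the outer two subspaces are closed with one-dimensional quotient, the middle subspace is closed. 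Your remaining ingredients are essentially sound: $(z-\lambda_0)\HN$ is closed because $M_z-\lambda_0$ is bounded below on $L^2_a(\D)$ for $\lambda_0\in\D$ (the detour through boundary values on $\T$ is not the right justification on the Bergman space), $V^*$ is an isometry, and $\ran V^* \perp \ker V$, so both outer subspaces above are indeed closed. To complete your argument you would need to add the lower bound $L_0$ and the finite-codimension squeeze; as written, the proof does not go through.
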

\begin{proof}
Since $[z_1 - z_2]$ is generated by the polynomial $z_1 - z_2$, we have $[z_1 - z_2]$ is a Hilbert-Schmidt submodule (\cite{Ya99}). So for $\lambda = (\lambda_0,\lambda_0) \in \D^2$, $(z_1 - \lambda_0)[z_1 - z_2] + (z_2 - \lambda_0)[z_1 - z_2]$ is closed and
\begin{align}\label{hsicark}
[z_1 - z_2] = (z_1 - \lambda_0)[z_1 - z_2] + (z_2 - \lambda_0)[z_1 - z_2] + \C (z_1 - z_2).
\end{align}
Note that $\HM = V^*\HN \oplus [z_1 - z_2]$. Let $L_0 = (z_1 - \lambda_0)[z_1 - z_2] + (z_2 - \lambda_0)[z_1 - z_2]$, we then have
\begin{align}\label{idfdrlz}
(z_1 - \lambda_0)\HM + (z_2 - \lambda_0)\HM = (z_1 - \lambda_0)V^*\HN + (z_2 - \lambda_0)V^*\HN + L_0.
\end{align}
Notice that
\begin{align*}
&V\left((z_1 - \lambda_0)\HM + (z_2 - \lambda_0)\HM\right) = (z-\lambda_0)\HN = V(V^*(z-\lambda_0)\HN)\\
& = V(V^*(z-\lambda_0)\HN \oplus L_0) = V(V^*(z-\lambda_0)\HN \oplus [z_1 - z_2]).
\end{align*}
It follows from (\ref{hsicark}) and (\ref{idfdrlz}) that
\begin{align}\label{cotibsa}
V^*(z-\lambda_0)\HN \oplus L_0 \subseteq (z_1 - \lambda_0)\HM + (z_2 - \lambda_0)\HM \subseteq V^*(z-\lambda_0)\HN \oplus [z_1 - z_2].
\end{align}
It is known that $(z-\lambda_0)\HN$ is closed, thus $V^*(z-\lambda_0)\HN$ is closed, so $V^*(z-\lambda_0)\HN \oplus L_0$ and $V^*(z-\lambda_0)\HN \oplus [z_1 - z_2]$ are closed. Since
$$V^*(z-\lambda_0)\HN \oplus [z_1 - z_2] = V^*(z-\lambda_0)\HN \oplus L_0 + \C(z_1 - z_2),$$
we conclude from (\ref{cotibsa}) that $(z_1 - \lambda_0)\HM + (z_2 - \lambda_0)\HM$ is closed.
\end{proof}

Similar result holds for the pair $S - \lambda$.
\begin{lemma}\label{clotosf}
Let $\HM \in Lat(H^2(\D^2))$ contain $z_1 - z_2$. Then for $\lambda = (\lambda_0,\lambda_0) \in \D^2$, $ran \partial_{S-\lambda}^0$ and $\ran \partial_{S-\lambda}^1$ are closed.
\end{lemma}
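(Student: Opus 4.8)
The plan is to exploit the fact that $z_1-z_2\in\HM$ to collapse the Koszul complex of $S-\lambda$ when $\lambda=(\lambda_0,\lambda_0)$, thereby reducing both closedness assertions to the single fact that $S_{z_1}-\lambda_0$ has closed range on $\HM^\perp$.

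First I would observe that $S_{z_1}=S_{z_2}$ on $\HM^\perp$. Indeed, since $z_1-z_2\in\HM$ and $\HM$ is a submodule, $(z_1-z_2)p\in\HM$ for every polynomial $p$; approximating an arbitrary $f\in H^2(\D^2)$ by polynomials and using that $M_{z_1-z_2}$ is bounded gives $(z_1-z_2)f\in\HM$ for all $f\in H^2(\D^2)$. In particular, for $f\in\HM^\perp$ we have $P_{\HM^\perp}(z_1-z_2)f=0$, i.e. $S_{z_1}f=S_{z_2}f$. Set $T=S_{z_1}-\lambda_0=S_{z_2}-\lambda_0$. Then for $\lambda=(\lambda_0,\lambda_0)$ the Koszul maps become $\partial_{S-\lambda}^0 f=(Tf,Tf)$ and $\partial_{S-\lambda}^1(f,g)=T(g-f)$; since $g-f$ ranges over all of $\HM^\perp$ as $(f,g)$ ranges over $\HM^\perp\oplus\HM^\perp$, we get $\ran\partial_{S-\lambda}^1=\ran T$ and $\ran\partial_{S-\lambda}^0=\{(h,h):h\in\ran T\}$. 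Since $h\mapsto(h,h)$ is a topological isomorphism onto the diagonal of $\HM^\perp\oplus\HM^\perp$, both ranges are closed if and only if $\ran T$ is closed.

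It remains to check that $\ran T=\ran(S_{z_1}-\lambda_0)$ is closed. Here $\varphi(z_2)=z_2$ is a finite Blaschke product and $K_\varphi(z_2)=\C$, so the proof of Lemma~\ref{espfszozp} applies and shows that $S_{z_1}$ is unitarily equivalent to $S_\HN$ with $\HN=V\HM\subseteq L^2_a(\D)$. Since $\lambda_0\in\D$, Lemma~\ref{clrfsnczp} guarantees that $S_\HN-\lambda_0$ has closed range, and hence so does $T$. I do not expect a serious obstacle: the one point that needs care is the identity $S_{z_1}=S_{z_2}$ on $\HM^\perp$, after which the Koszul complex of $S-\lambda$ degenerates to a single operator and the conclusion is inherited from the Bergman-space model through $V$.
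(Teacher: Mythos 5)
Your argument is correct, and it takes a noticeably different route from the paper's. Your key move is the identity $S_{z_1}=S_{z_2}$ on $\HM^\perp$ (which you justify properly via $(z_1-z_2)H^2(\D^2)\subseteq[z_1-z_2]\subseteq\HM$), which collapses the Koszul maps at a diagonal point $\lambda=(\lambda_0,\lambda_0)$: $\ran\partial_{S-\lambda}^1=\ran T$ and $\ran\partial_{S-\lambda}^0$ is the image of $\ran T$ under the diagonal embedding, with $T=S_{z_1}-\lambda_0$. Both closedness claims then reduce to the single fact that $S_{z_1}-\lambda_0$ has closed range, which you correctly import from the Bergman-space model: $S_{z_1}$ is unitarily equivalent to $S_\HN$ by the computation in Lemma \ref{espfszozp}, and $S_\HN-\lambda_0$ has closed range by Lemma \ref{clrfsnczp}. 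The paper instead works with adjoints and treats the two ranges separately: for $\partial_{S-\lambda}^0$ it uses the intertwining $M_{z_i}^*V^*=V^*M_z^*$ (the adjoint form of your observation) to identify $\ran\partial_S^{0*}$ with $V^*M_z^*(\HN^\perp)$, and for $\partial_{S-\lambda}^1$ it introduces the column operator $\Lambda^*=(-M_{z_2}^*,\,M_{z_1}^*)^{t}$ on $H^2(\D^2)$, which has closed range and one-dimensional kernel, and restricts it to the co-invariant subspace $\HM^\perp$. Your version buys uniformity (one closed-range fact covers both boundary maps) and avoids the auxiliary operator $\Lambda^*$; the paper's version for $\partial^1$ does not rely on the degeneracy $S_{z_1}=S_{z_2}$ and so is closer in form to arguments that would survive for non-diagonal $\lambda$ or more general submodules. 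One cosmetic remark: if $\dim\HM^\perp<\infty$ the statement is trivial, so the standing assumption $\dim\HM^\perp=\infty$ made before Lemma \ref{clrfsnczp} causes no loss when you invoke that lemma.
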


\begin{proof}
We prove the lemma for $\lambda = (0,0)$, the other cases follow by a similar argument. First we show $\ran \partial_S^0$ is closed. It is equivalent to show $\ran \partial_S^{0*}$ is closed. Let $\HN = V\HM$, then $\HM = V^*\HN + \ker V$ and $\HM^\perp =  V^*(\HN^\perp)$. Note that $\ran \partial_S^{0*} = M_{z_1}^* \HM^\perp + M_{z_2}^* \HM^\perp$. So
\begin{align*}
\ran \partial_S^{0*} &= M_{z_1}^* V^*(\HN^\perp) + M_{z_2}^* V^*(\HN^\perp)\\
& = V^* M_z^* (\HN^\perp) + V^* M_z^* (\HN^\perp)\\
& = V^* M_z^* (\HN^\perp).
\end{align*}
Therefore $\ran \partial_S^{0*}$ is closed.

Next we show $\ran \partial_S^1$ is closed. It is equivalent to show that $\ran \partial_S^{1*}$ is closed. Note that
$$\partial_S^{1*} = \left(
\begin{matrix}
-M_{z_2}^*|_{\HM^\perp}\\
M_{z_1}^*|_{\HM^\perp}
\end{matrix}
\right)
:\HM^\perp \rightarrow \left(
\begin{matrix}
\HM^\perp\\
\HM^\perp
\end{matrix}.
\right)$$ Let $\Lambda^* = \left(
\begin{matrix}
-M_{z_2}^*\\
M_{z_1}^*
\end{matrix}
\right)$. Since $\Lambda^*: H^2(\D^2)\rightarrow \left(
\begin{matrix}
H^2(\D^2)\\
H^2(\D^2)
\end{matrix}
\right)$ has closed range and $\ker \Lambda^* = \C$, applying the same reasoning as in Lemma \ref{clrfsnczp}, we see that $\ran \partial_S^{1*}$ is closed.
\end{proof}

The following two lemmas are needed to study the dimensions for the cohomology spaces for the pairs $R - \lambda$ and $S - \lambda$.
\begin{lemma}\label{indfnnlz}
Let $\HN \in Lat(M_z,L^2_a(\D))$.\\
(i) If $\lambda_0 \in Z(\HN)$, let $\HN_0 = \HN /\varphi_0$, where $\varphi_0(z) = \frac{z-\lambda_0}{1-\overline{\lambda_0}z}$, then $\ind \HN_0 = \ind \HN$.\\
(ii) If $\lambda_0 \not\in Z(\HN)$, let $\HN_1 = \{f \in \HN: f(\lambda_0) = 0\}$, then $\ind (\HN_1/\varphi_0) = \ind \HN$.
\end{lemma}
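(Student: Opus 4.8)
The statement concerns the index $\ind \HN = \dim(\HN \ominus z\HN)$ of an $M_z$-invariant subspace of the Bergman space, and asserts that this index is unchanged when we pass to a related invariant subspace at a point $\lambda_0 \in \D$: either by "dividing out" a zero of $\HN$ at $\lambda_0$ (case (i)), or by first intersecting with the subspace of functions vanishing at $\lambda_0$ and then dividing (case (ii)). The natural tool is the identity $\dim(\HN \ominus z\HN) = \dim(\HN \ominus (z - \lambda_0)\HN)$, valid for all $\lambda_0 \in \D$ because $z - \lambda_0$ and $z$ generate the same maximal ideal localized appropriately; more precisely, it follows from $\HN = [\HN \ominus (z-\lambda_0)\HN]$ (Aleman--Richter--Sundberg \cite{ARS96}, quoted in the paragraph preceding Lemma \ref{rbhanthbs}) together with the fact that the codimension of $(z-\lambda_0)\HN$ in $\HN$ is independent of $\lambda_0 \in \D$. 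So I would first record this: $\ind \HN = \dim(\HN \ominus (z-\lambda_0)\HN)$ for every $\lambda_0 \in \D$.

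\textbf{Case (i).} Here $\HN_0 = \varphi_0^{-1}\HN$, i.e. multiplication by $\varphi_0^{-1}$ carries $\HN$ isometrically (since $\varphi_0$ is inner on $\D$, though for the Bergman space $M_{\varphi_0}$ is merely bounded below, not isometric; the right statement is that $M_{\varphi_0} : \HN_0 \to \HN$ is a bounded invertible module map onto its range $\varphi_0 \HN_0 = \HN$). The key observation is that $M_{\varphi_0}$ intertwines $M_z$ on $\HN_0$ and $M_z$ on $\HN$ up to the bounded invertible $M_{\varphi_0}$, and more to the point it maps $(z-\lambda_0)\HN_0$ onto $\varphi_0(z-\lambda_0)\HN_0 = (z-\lambda_0)\HN$; wait --- one must be careful, since $\varphi_0$ and $z - \lambda_0$ differ by the unit $1 - \overline{\lambda_0} z$, so $\varphi_0 \HN_0 = \HN$ forces $(z-\lambda_0)\HN_0 \mapsto (z-\lambda_0)\varphi_0 \HN_0 = (z-\lambda_0)\HN$ under $M_{\varphi_0}$. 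Hence $M_{\varphi_0}$ induces a linear isomorphism $\HN_0/(z-\lambda_0)\HN_0 \to \HN/(z-\lambda_0)\HN$, giving $\ind \HN_0 = \dim(\HN_0 \ominus (z-\lambda_0)\HN_0) = \dim(\HN \ominus (z-\lambda_0)\HN) = \ind \HN$.

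\textbf{Case (ii).} Now $\lambda_0 \notin Z(\HN)$, so $\HN_1 = \{f \in \HN : f(\lambda_0)=0\}$ has codimension exactly $1$ in $\HN$ (the point evaluation at $\lambda_0$ is a nonzero bounded functional on $\HN$). The subspace $\HN_1$ is $M_z$-invariant? No --- $M_z$ does not preserve vanishing at $\lambda_0$. The correct structure is: $\HN_1 = \varphi_0 \HN_0'$ where $\HN_0' := \varphi_0^{-1}\HN_1$ is $M_z$-invariant, and the statement is $\ind \HN_0' = \ind \HN$. I would argue via the index formula for the Bergman shift: for any $M_z$-invariant $\HK$ with $z\HK$ closed, $\ind \HK = \dim(\HK \ominus z\HK)$, and this behaves additively under the finite-codimension inclusion $\HN_1 \subseteq \HN$. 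Concretely, one has the exact-sequence / Fredholm-additivity bookkeeping: $\dim(\HN_1 \ominus z\HN_1) - \dim(\HN \ominus z\HN)$ equals $\operatorname{codim}_{\HN}\HN_1$ minus $\operatorname{codim}_{z\HN}z\HN_1$; since $z$ is injective and $z\HN_1$ has the same codimension in $z\HN$ as $\HN_1$ in $\HN$ (namely $1$), these cancel and $\ind \HN_1 = \ind \HN$ --- but $\HN_1$ is not invariant, so this must be run for $\HN_0'$ after the $\varphi_0$-twist, using case (i)'s isomorphism argument to transfer. So: combine the codimension-$1$ count with the $M_{\varphi_0}$-isomorphism to get $\ind(\HN_1/\varphi_0) = \ind \HN_1 = \ind \HN$.

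\textbf{Main obstacle.} The delicate point is that in the Bergman space $M_{\varphi_0}$ is \emph{not} an isometry and $\varphi_0$ is not inner in the Bergman sense, so "dividing by $\varphi_0$" requires knowing $\HN/\varphi_0$ is genuinely closed and that $M_{\varphi_0}$ restricted to it is bounded below --- this is exactly where one invokes that $\HN$ has a zero at $\lambda_0$ (case (i)) or has been cut down to functions vanishing at $\lambda_0$ (case (ii)), so that $\varphi_0^{-1}$ times an element of the space is again in $L^2_a(\D)$; the standard fact here is that for $f \in L^2_a(\D)$ with $f(\lambda_0) = 0$, $f/\varphi_0 \in L^2_a(\D)$ with $\|f/\varphi_0\| \asymp \|f\|$ on this codimension-$1$ subspace. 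Once that foundational estimate and the closedness of $z\HN$, $z\HN_0$, $z\HN_1$ (which follow from Lemma \ref{clsfdrd} / the ARS theorem, or directly since finite-dimensional perturbations preserve closedness) are in hand, the index equalities are a matter of the elementary Fredholm-additivity bookkeeping sketched above, and I would present them as a short computation rather than belabor them.
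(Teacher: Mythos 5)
Your argument is genuinely different from the paper's. The paper conjugates by the unitary weighted composition operator $U_0f(z)=f(-\varphi_0(z))\frac{1-|\lambda_0|^2}{(1-\overline{\lambda_0}z)^2}$ to move $\lambda_0$ to the origin, observes $U_0\HN_0=(U_0\HN)/z$ and $(U_0\HN_1)/z=\{h: zh\in U_0\HN\}$, and then cites the known $\lambda_0=0$ results of Janas and Zhu (Propositions 3 and 5 of \cite{Zhu98}) for invariance of the index under division by $z$. You instead argue directly at the point $\lambda_0$: in case (i) the bijection $M_{\varphi_0}\colon\HN_0\to\HN$ carries $(z-\lambda_0)\HN_0$ onto $(z-\lambda_0)\HN$, so the two quotients are isomorphic, and you finish with the fact that $\dim\bigl(\HK\ominus(z-\lambda)\HK\bigr)$ is independent of $\lambda\in\D$ for an invariant subspace $\HK$ of $L^2_a(\D)$. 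That is correct and self-contained, and it buys independence from the Zhu/Janas citations; the price is that the $\lambda$-independence must be justified honestly (it follows from continuity of the semi-Fredholm index of $(M_z-\lambda)|_{\HK}$, which is injective with closed range for all $\lambda\in\D$ --- not, as you suggest, from the ARS generation theorem, which is a different statement). Note the paper itself uses this $\lambda$-independence in the discussion following Lemma \ref{rbhanthbs}, so relying on it is consistent with the paper's toolkit.

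There is one genuine error in case (ii): you assert that $\HN_1=\{f\in\HN: f(\lambda_0)=0\}$ is not $M_z$-invariant because ``$M_z$ does not preserve vanishing at $\lambda_0$.'' It does: $(zf)(\lambda_0)=\lambda_0 f(\lambda_0)=0$ whenever $f(\lambda_0)=0$, so $\HN_1$ is a closed invariant subspace, $\lambda_0\in Z(\HN_1)$, and $\ind\HN_1$ is well defined. This mistake sends you on a detour (``this must be run for $\HN_0'$ after the $\varphi_0$-twist'') that you never actually carry out, and which would be awkward since $\HN_1/\varphi_0$ and $\HN$ are not nested. Once the invariance of $\HN_1$ is restored, case (ii) is exactly the clean two-step argument your last sentence states: the lattice count $\dim(\HN/z\HN_1)=\dim(\HN/z\HN)+\dim(z\HN/z\HN_1)=\dim(\HN/\HN_1)+\dim(\HN_1/z\HN_1)$ with both correction terms equal to $1$ gives $\ind\HN_1=\ind\HN$ (with the obvious separate remark when $\ind\HN=\infty$), and case (i) applied to $\HN_1$ gives $\ind(\HN_1/\varphi_0)=\ind\HN_1$. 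So the proof is salvageable as written, but the invariance claim must be corrected, not merely worked around.
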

\begin{proof}
(i) Let $U_0$ be the operator on $L^2_a(\D)$ defined by $U_0 f (z)= f(-\varphi_0(z)) \frac{1-|\lambda_0|^2}{(1-\overline{\lambda_0}z)^2}$, then $U_0$ is a unitary operator. Note that $U_0 \HN_0 = (U_0 \HN)/z$, so $\ind \HN_0 = \ind ((U_0 \HN)/z)$. By \cite[Lemma 2.1]{III} or \cite{Ja83} or \cite[Proposition 3]{Zhu98}, we have $\ind ((U_0 \HN)/z) = \ind (U_0 \HN)$. Thus $\ind \HN_0 = \ind (U_0 \HN) = \ind \HN$.

(ii) Since $U_0 (\HN_1/\varphi_0) = (U_0 \HN_1)/z$, it follows that $\ind (\HN_1/\varphi_0) = \ind ((U_0 \HN_1)/z)$. Notice that $U_0 \HN_1 = \{g \in U_0 \HN: g(0) = 0\}$, so $(U_0 \HN_1)/z = \{h \in L^2_a(\D): zh \in U_0 \HN\}$. Then \cite[Proposition 5]{Zhu98} implies that $\ind ((U_0 \HN_1)/z) = \ind U_0 \HN$. Hence $\ind (\HN_1/\varphi_0) = \ind \HN$.
\end{proof}

\begin{lemma}\label{dimfmsfm}
Let $\HM \in Lat(H^2(\D^2))$ contain $z_1 - z_2$ and $\HN = V\HM$. Then for $\lambda = (\lambda_0,\lambda_0) \in \D^2$,
$$ \dim (\ker \partial_{S-\lambda}^{1} \ominus\ran \partial_{S-\lambda}^{0}) =
\begin{cases}
\ind \HN + 1, \lambda \in Z(\HM),\\
\ind \HN - 1, \lambda \not\in Z(\HM).
\end{cases}
$$
\end{lemma}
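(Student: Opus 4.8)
The plan is to compute $\dim(\ker\partial_{S-\lambda}^1\ominus\ran\partial_{S-\lambda}^0)$ by relating it, via the index formula for the Koszul complex of $S-\lambda$ (equation (\ref{indos})), to quantities we already control: the dimensions $\dim\ker\partial_{S-\lambda}^0$ and $\dim(\ran\partial_{S-\lambda}^1)^\perp$, and the Fredholm index $\ind(S-\lambda)$. From (\ref{indos}) we have
\begin{align*}
\dim(\ker\partial_{S-\lambda}^1\ominus\ran\partial_{S-\lambda}^0)
= \dim\ker\partial_{S-\lambda}^0 + \dim(\ran\partial_{S-\lambda}^1)^\perp - \ind(S-\lambda),
\end{align*}
which is legitimate once we know (by Lemma \ref{clotosf}) that the relevant ranges are closed, so that $S-\lambda$ is at least semi-Fredholm and each cohomology space is well defined. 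The three terms on the right will be pinned down separately.

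First I would identify $\dim\ker\partial_{S-\lambda}^0$. By Lemma \ref{isosps} this equals $\dim\ker F_\lambda$, and by Lemma \ref{isobkfaks} (with $\varphi(z_2)=z_2$) $\ker\widetilde{F_\lambda}=M_{\varphi_{\lambda_0}}\ker\partial_{S-\lambda}^0$, so in fact $\dim\ker\partial_{S-\lambda}^0=\dim\ker\widetilde F_\lambda=\dim\ker F_\lambda$. The point is that this kernel dimension is intrinsic and can be read off on the Bergman side: translating through $V$ and using the description of $\ker\widetilde F_\lambda$ as $\{f\in\ran P_{\lambda_1}:M_{\varphi_{\lambda_2}}f\in\varphi_{\lambda_1}\HM\}$, one sees $\ker F_\lambda\cong\{h\in L^2_a(\D):(z-\lambda_0)h\in\HN\}/\HN$-type object; concretely, combining with Lemma \ref{rbhanthbs} and Lemma \ref{indfnnlz}, when $\lambda\notin Z(\HM)$ this dimension is $\ind\HN$, and when $\lambda\in Z(\HM)$ it is $\ind\HN+1$ — indeed, $\lambda\in Z(\HM)$ is equivalent to $\lambda_0\in Z(\HN)$, and Lemma \ref{indfnnlz}(i)–(ii) tells us precisely how passing to $\HN/\varphi_0$ or to $\{f\in\HN:f(\lambda_0)=0\}/\varphi_0$ behaves. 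Second, I would compute $\dim(\ran\partial_{S-\lambda}^1)^\perp$: this is $\dim\bigl(\HM^\perp\ominus(S_{z_1-\lambda_0}\HM^\perp+S_{z_2-\lambda_0}\HM^\perp)\bigr)$, which by a standard duality argument equals $\dim\bigl(\HM\cap\ker M_{z_1-\lambda_0}^*\cap\ker M_{z_2-\lambda_0}^*\bigr)$, i.e. the multiplicity of the common eigenvalue $(\overline{\lambda_0},\overline{\lambda_0})$ of $(M_{z_1}^*,M_{z_2}^*)$ restricted to $\HM$; since $\HM\supseteq[z_1-z_2]$ one checks this space is either $0$ or one-dimensional and, crucially, is nonzero exactly when $\lambda\in Z(\HM)$. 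Third, $\ind(S-\lambda)$: by the path-connectedness argument already used in Lemma \ref{sFosnzcp} (via $S_{z_1}\cong S_\HN$ and Lemma \ref{sFosnzcp}) the index of $S-\lambda$ is independent of $\lambda_0\in\D$, and for $\lambda_0$ generic it can be evaluated, or one simply uses that $\ind S=\dim\ker\partial_S^0-\dim(\ker\partial_S^1\ominus\ran\partial_S^0)+\dim(\ran\partial_S^1)^\perp$ is constant and pins the bookkeeping together.

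Assembling: when $\lambda\notin Z(\HM)$ we get $\dim\ker\partial_{S-\lambda}^0=\ind\HN$ and $\dim(\ran\partial_{S-\lambda}^1)^\perp=0$; when $\lambda\in Z(\HM)$ we get $\dim\ker\partial_{S-\lambda}^0=\ind\HN+1$ and $\dim(\ran\partial_{S-\lambda}^1)^\perp=1$. With the index term contributing a constant (which the $\lambda_1\neq\lambda_2$ analysis, where everything is Fredholm of known indices by Theorem \ref{GRS05np}, fixes at the value making the two cases read $\ind\HN+1$ and $\ind\HN-1$ respectively — note the net jump is $2$, consistent with $0\to1$ in the cokernel term plus $\ind\HN\to\ind\HN+1$ in the kernel term), we obtain the stated formula. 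The main obstacle I anticipate is the careful identification of $\dim\ker\partial_{S-\lambda}^0$ in the two cases — in particular verifying cleanly that $\lambda\in Z(\HM)\iff\lambda_0\in Z(\HN)$ and then invoking Lemma \ref{indfnnlz} with the correct variant (i) versus (ii) — together with keeping the semi-Fredholm bookkeeping valid even when $\ind\HN=\infty$, where one must argue each equality as an identity of (possibly infinite) cardinals rather than relying on Fredholm additivity.
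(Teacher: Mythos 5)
Your strategy---solving for the middle cohomology from the Euler-characteristic identity (\ref{indos})---has two genuine problems. First, the arithmetic does not close. When $\ind_{(0,0)}\HM<\infty$, the pair $S-\lambda$ is Fredholm of index $0$ (Proposition \ref{fhnotrs}), so your identity forces $\dim(\ker\partial_{S-\lambda}^{1}\ominus\ran\partial_{S-\lambda}^{0})=\dim\ker\partial_{S-\lambda}^{0}+\dim(\ran\partial_{S-\lambda}^{1})^\perp$. Your cokernel values ($1$ on $Z(\HM)$, $0$ off it) are correct, but your kernel values are each too large by one: the correct values are $\dim\ker\partial_{S-\lambda}^{0}=\ind\HN$ when $\lambda\in Z(\HM)$ and $\ind\HN-1$ when $\lambda\notin Z(\HM)$. (Test $\HM=[z_1-z_2]$: then $\HN=\{0\}$, $\ind\HN=0$, and $S_{z_1}=S_{z_2}$ is the injective Bergman shift, so $\ker\partial_S^0=0=\ind\HN$, not $\ind\HN+1$; test $\HM=H^2(\D^2)$: everything vanishes, giving $0=\ind\HN-1$, not $\ind\HN$.) With your kernel values and the forced index $0$ you would obtain $\ind\HN+2$ and $\ind\HN$ rather than the stated $\ind\HN+1$ and $\ind\HN-1$; you cannot ``choose'' the index constant to repair this, since it is pinned at $0$ by the off-diagonal analysis you cite.

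Second, and more seriously, the Euler-characteristic route is unavailable precisely where the lemma is needed. The statement carries no finiteness hypothesis, and in the paper it is invoked (in Proposition \ref{dmfass}) exactly when $\ind_\lambda\HM=\infty$, hence $\ind\HN=\infty$, to conclude that the middle cohomology is infinite-dimensional; in that case $S-\lambda$ is not Fredholm, $\ind(S-\lambda)$ is undefined, and the alternating-sum identity degenerates to $\infty-\infty$. You flag this at the end but offer no substitute argument. The paper sidesteps both issues by working directly with the middle cohomology: it identifies $\ker\partial_{S-\lambda}^{1}\ominus\ran\partial_{S-\lambda}^{0}$, via an invertible weighting $W$ and the unitary $V$, with the concrete set $I=\{(f_1,g_1)\in\HN^\perp\oplus\HN^\perp:\ S_\HN f_1=S_\HN g_1,\ M_{\varphi_0}^*(f_1+g_1)=0\}$, and then shows the explicit map $(f_1,g_1)\mapsto g_1-f_1$ is onto $\HN^\perp\cap(\HN/\varphi_0)$ with one-dimensional kernel when $\lambda_0\in Z(\HN)$, and a bijection onto $\HN^\perp\cap(\HN_1/\varphi_0)$ when $\lambda_0\notin Z(\HN)$, after which Lemma \ref{indfnnlz} converts these dimensions to $\ind\HN$. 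That argument is a statement about cardinals and is valid whether or not $\ind\HN$ is finite. To salvage your outline you would need both to correct the computation of $\dim\ker\partial_{S-\lambda}^{0}$ and to supply an independent argument for the infinite-index case.
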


\begin{proof}
Suppose $\lambda = (\lambda_0,\lambda_0) \in \D^2$. Note that
$$\ker \partial_{S-\lambda}^1 = \{(f,g): q(z_1-\lambda_1)g = q(z_2-\lambda_2) f, f, g \in \HM^\perp\},$$
$$\ran \partial_{S-\lambda}^0 = \{(q(z_1-\lambda_1) f, q(z_2-\lambda_2) f): f \in \HM^\perp\}.$$
Let
$$\Lambda^1 = \{(f,g): S_{\varphi_{\lambda_1}}g = S_{\varphi_{\lambda_2}} f, f, g \in \HM^\perp\},$$
$$\Lambda^0 = \{(S_{\varphi_{\lambda_1}} f, S_{\varphi_{\lambda_2}} f): f \in \HM^\perp\}.$$
We define $W: \ker \partial_{S-\lambda}^1 \rightarrow \Lambda^1$ by
$$W(f,g) = (q(1-\overline{\lambda_2}z_2 )f, q(1-\overline{\lambda_1}z_1) g).$$
It is not difficult to verify that $W$ is one-to-one and onto, and $W (\ker \partial_{S-\lambda}^1 / \ran \partial_{S-\lambda}^0) = \Lambda^1 / \Lambda^0$. Thus $\dim (\ker \partial_{S-\lambda}^1 / \ran \partial_{S-\lambda}^0) = \dim \Lambda^1 / \Lambda^0$. Notice that
$$\Lambda^1 \ominus \Lambda^0 = \{(f,g): S_{\varphi_{\lambda_1}}g = S_{\varphi_{\lambda_2}} f, M_{\varphi_{\lambda_1}}^* f + M_{\varphi_{\lambda_2}}^* g = 0, f, g \in \HM^\perp\}.$$
Let $\varphi_0(z) = \frac{z-\lambda_0}{1-\overline{\lambda_0}z}, S_\HN^* = M_{\varphi_{0}}^*|\HN^\perp$ on $\HN^\perp$ and set \[I = \{(f_1, g_1): S_\HN g_1 = S_\HN f_1, M_{\varphi_0}^* (f_1 + g_1) = 0, f_1, g_1 \in \HN^\perp\}.\] We define the map
$$T: \Lambda^1 \ominus \Lambda^0 \rightarrow I$$
by sending $(f,g)$ to $(Vf, Vg)$. Then $T$ is one-to-one and onto. Thus $\dim (\Lambda^1 \ominus \Lambda^0) = \dim I$. Now we determine $\dim I$.

(i) If $\lambda_0 \in Z(\HN)$, then $(\frac{1}{(1-\overline{\lambda_0}z)^2},\frac{1}{(1-\overline{\lambda_0}z)^2}) \in I$. Let $\HN_0 = \HN / \varphi_0$ and $(f_1, g_1) \in I$, then $\varphi_0g_1 - \varphi_0f_1 =h_1$ for some $h_1 \in \HN$. Thus $g_1 - f_1 = h_1 / \varphi_0 \in \HN^\perp \cap \HN_0$. Now define
$$A: I \rightarrow \HN^\perp\cap\HN_0$$
by $A(f_1,g_1) = g_1 - f_1$. If $A(f_1,g_1) = g_1 - f_1 = 0$, then from $M_{\varphi_0}^* (f_1 + g_1) = 0$, we have $f_1 = g_1 = c\frac{1}{(1-\overline{\lambda_0}z)^2}, c \in \C$. On the other hand, for $h_1/\varphi_0 \in \HN^\perp \cap \HN_0$, let $f_1 = \frac{-h_1}{2\varphi_0}$ and $g_1 = \frac{h_1}{2\varphi_0}$. Then $(f_1,g_1) \in I$ and $A (f_1,g_1) = h_1/\varphi_0$. Hence $A$ is onto. Therefore $\dim I = 1 + \dim \HN^\perp \cap \HN_0$. Note that $\HN_0 = \left(\HN_0 \ominus \varphi_0\HN_0\right) \oplus \HN$, so $\dim I = 1+ \ind \HN_0$. It then follows from Lemma \ref{indfnnlz} that $\dim I = 1 + \ind \HN$.

(ii) If $\lambda_0 \not\in Z(\HN)$, let $Q_\HN$ be the projection onto $\HN$ and $\HN_1 = \{h\in \HN: h(\lambda_0) = 0\}$, then $\HN_1 = \HN \ominus \C Q_\HN \frac{1}{(1-\overline{\lambda_0}z)^2}$. Let $(f_1, g_1) \in I$, then $\varphi_0g_1 - \varphi_0f_1 =h_1$ for some $h_1 \in \HN$. Hence $h_1 \in \HN_1$ and $h_1 / \varphi_0 \in \HN_1/\varphi_0 \cap \HN^\perp$. Similarly, we define
$$X: I \rightarrow \HN^\perp\cap\HN_1/\varphi_0$$
by $X(f_1,g_1) = g_1 - f_1$. Then one checks that $X$ is one-to-one and onto. So $\dim I = \dim (\HN^\perp \cap \HN_1/\varphi_0)$. Notice that $\HN_1/\varphi_0 = (\HN_1/\varphi_0 \ominus \HN) \oplus (\HN \ominus \HN_1) \oplus \HN_1$, thus \[\dim I = \dim (\HN^\perp \cap \HN_1/\varphi_0) = \dim (\HN_1/\varphi_0 \ominus \HN_1) - 1.\] Lemma \ref{indfnnlz} then ensures that $\dim I = \dim (\HN_1/\varphi_0 \ominus \HN_1) - 1 = \ind \HN - 1$. The proof is complete.
\end{proof}
Now we determine the dimensions for the cohomology vector spaces for the pairs $R - \lambda$ and $S - \lambda$.
\begin{prop}\label{dmfass}
Let $\HM \in Lat(H^2(\D^2))$ contain $z_1 - z_2$. Then for $\lambda \in \D^2$\\
(i) $$\dim\ker F_\lambda = \dim (\ker \partial_{R - \lambda}^1 \ominus \ran \partial_{R - \lambda}^0) = \dim \ker \partial_{S-\lambda}^{0} = \ind_\lambda \HM - 1;$$
(ii) $$ \dim (\ker \partial_{S-\lambda}^{1} \ominus\ran \partial_{S-\lambda}^{0}) =
\begin{cases}
\ind_\lambda \HM, \lambda \in Z(\HM),\\
\ind_\lambda \HM - 1, \lambda \not\in Z(\HM);
\end{cases}
$$
(iii) $$ \dim(\ran \partial_{S-\lambda}^{1})^\perp =
\begin{cases}
1, \lambda \in Z(\HM),\\
0, \lambda \not\in Z(\HM).
\end{cases}
$$
\end{prop}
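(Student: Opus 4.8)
The plan is to prove the three identities together, organizing the argument by whether $\lambda$ lies on the diagonal $\{\lambda_1=\lambda_2\}$ of $\D^2$ (equivalently, whether $z_1-z_2$ vanishes at $\lambda$) and, in the diagonal case, by whether $\ind_{(0,0)}\HM$ is finite. I would record at the outset the elementary fact that, since $[z_1-z_2]\subseteq\HM$, one has $(z_1-z_2)f\in[z_1-z_2]\subseteq\HM$ for every $f\in H^2(\D^2)$; hence $S_{z_1}f=S_{z_2}f$ for all $f\in\HM^\perp$, so $S_{z_1}=S_{z_2}=:T$ on $\HM^\perp$ and $S_{z_i-\lambda_i}=T-\lambda_i$. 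I would then dispatch (iii) first, in full generality: $(\ran\partial_{S-\lambda}^1)^\perp=\ker(\partial_{S-\lambda}^1)^*=\ker S_{z_1-\lambda_1}^*\cap\ker S_{z_2-\lambda_2}^*$, and since $\HM^\perp$ is invariant under $M_{z_1}^*,M_{z_2}^*$ this equals $\{f\in\HM^\perp:M_{z_1}^*f=\overline{\lambda_1}f,\ M_{z_2}^*f=\overline{\lambda_2}f\}$; the joint eigenvectors of $M_{z_1}^*,M_{z_2}^*$ in $H^2(\D^2)$ are precisely the scalar multiples of the reproducing kernel $k_\lambda$, so this space is $\C k_\lambda\cap\HM^\perp$, of dimension $1$ if $k_\lambda\perp\HM$ (that is, $\lambda\in Z(\HM)$) and $0$ otherwise. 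This is (iii).

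For (i), the first two equalities hold for every $\lambda\in\D^2$ by Lemma \ref{isosps}, so it remains to identify $\dim\ker\partial_{S-\lambda}^0$ with $\ind_\lambda\HM-1$. If $\lambda$ is off the diagonal, then $\ker\partial_{S-\lambda}^0=\ker(T-\lambda_1)\cap\ker(T-\lambda_2)=\{0\}$ (subtract the two equations, using $\lambda_1\neq\lambda_2$), while $z_1-z_2$ is a nonzero bounded function of $\HM$ not vanishing at $\lambda$, so Theorem \ref{GRS05np} gives $\ind_\lambda\HM=1$; both sides are $0$. If $\lambda=(\lambda_0,\lambda_0)$ is on the diagonal, then $\ker\partial_{S-\lambda}^0=\ker(T-\lambda_0)$, and since $T$ is unitarily equivalent to the compression $S_\HN$ of the Bergman shift to $\HN^\perp$, its dimension equals that of $\ker(S_\HN-\lambda_0)=\{u\in\HN^\perp:\varphi_0 u\in\HN\}$, $\varphi_0(z)=\frac{z-\lambda_0}{1-\overline{\lambda_0}z}$. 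A quotient-module argument of the type in Lemma \ref{indfnnlz} (via \cite{Zhu98}) identifies this with $(\HN/\varphi_0)\ominus\HN$ when $\lambda_0\in Z(\HN)$ and with $(\HN_1/\varphi_0)\ominus\HN$, $\HN_1=\{h\in\HN:h(\lambda_0)=0\}$, when $\lambda_0\notin Z(\HN)$, of dimensions $\ind\HN$ and $\ind\HN-1$ respectively, and $\lambda_0\in Z(\HN)$ is the same as $\lambda\in Z(\HM)$. To match this with $\ind_\lambda\HM-1$: Lemma \ref{rbhanthbs} gives $\ind\HN\le\ind_\lambda\HM\le\ind\HN+1$, so if $\ind\HN=\infty$ then $\ind_\lambda\HM=\infty$ and we are done; if $\ind\HN<\infty$ then $\HM$ is Hilbert--Schmidt (Theorem \ref{hlsmnmcdv2}), so by Proposition \ref{fhnotrs} the pair $R-\lambda$ is Fredholm of index $1$, whence $F_\lambda$ is Fredholm with $\ind F_\lambda=-1$ by \eqref{rlbindofr}, i.e. $\dim\ker\partial_{S-\lambda}^0=\dim\ker F_\lambda=\ind_\lambda\HM-1$; together with the dichotomy above this forces $\ind_\lambda\HM=\ind\HN+1$ when $\lambda\in Z(\HM)$ and $\ind\HN$ otherwise, completing (i).

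For (ii), in the finite case $\ind_{(0,0)}\HM<\infty$ I would simply insert (i), (iii) and the fact $\ind(S-\lambda)=0$ (Proposition \ref{fhnotrs}) into the index formula \eqref{indos}: from $0=\dim\ker\partial_{S-\lambda}^0-\dim(\ker\partial_{S-\lambda}^1\ominus\ran\partial_{S-\lambda}^0)+\dim(\ran\partial_{S-\lambda}^1)^\perp$ one gets $\dim(\ker\partial_{S-\lambda}^1\ominus\ran\partial_{S-\lambda}^0)=(\ind_\lambda\HM-1)+\dim(\ran\partial_{S-\lambda}^1)^\perp$, which by (iii) is exactly the asserted value for every $\lambda\in\D^2$. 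In the infinite case: for $\lambda$ off the diagonal, write $\partial_{S-\lambda}^1(f,g)=(T-\lambda_1)g-(T-\lambda_2)f$ and $\partial_{S-\lambda}^0 h=((T-\lambda_1)h,(T-\lambda_2)h)$; given $(f,g)\in\ker\partial_{S-\lambda}^1$, a one-line computation shows $h:=(g-f)/(\lambda_1-\lambda_2)\in\HM^\perp$ satisfies $\partial_{S-\lambda}^0 h=(f,g)$, so $\ker\partial_{S-\lambda}^1=\ran\partial_{S-\lambda}^0$ and the cohomology is $0=\ind_\lambda\HM-1$; for $\lambda$ on the diagonal, Lemma \ref{dimfmsfm} gives the cohomology dimension $\ind\HN\pm1=\infty$ (as $\HN$ is not finitely generated), which matches $\ind_\lambda\HM$ (resp. $\ind_\lambda\HM-1$), also $\infty$ by Lemma \ref{rbhanthbs}.

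I expect the main obstacle to be the diagonal case of (i): Lemma \ref{rbhanthbs} only localizes $\ind_{(\lambda_0,\lambda_0)}\HM$ to the two-element set $\{\ind\HN,\ind\HN+1\}$, and pinning down which value occurs — showing it is governed by membership in $Z(\HM)$ — requires the Fredholm index theorem for $R-\lambda$ in the Hilbert--Schmidt case, plus care about the identifications when the dimensions are infinite. One must also verify that the $\ominus$'s in the statement are meaningful, i.e. that $\ran\partial_{S-\lambda}^0$ is closed inside $\ker\partial_{S-\lambda}^1$, which follows from Lemma \ref{clotosf} on the diagonal and from the explicit $h$ above off the diagonal.
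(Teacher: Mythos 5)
Your proof is correct, and its overall skeleton coincides with the paper's: part (iii) via the reproducing kernel $k_\lambda$, Lemma \ref{isosps} for the first two equalities in (i), the off-diagonal case via Theorem \ref{GRS05np}, the finite-index diagonal case via Proposition \ref{fhnotrs} together with the index identities \eqref{rlbindofr} and \eqref{indos}, and Lemma \ref{dimfmsfm} for part (ii) on the diagonal when the index is infinite. You deviate in two places, both legitimately. First, off the diagonal you exploit $S_{z_1}=S_{z_2}$ to compute $\ker\partial_{S-\lambda}^0=\{0\}$ and to exhibit an explicit preimage $h=(g-f)/(\lambda_1-\lambda_2)$ showing $\ker\partial_{S-\lambda}^1=\ran\partial_{S-\lambda}^0$; the paper instead quotes the Fredholm index $\ind(S-\lambda)=0$ from \cite{GRS05} and feeds it into \eqref{indos}. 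Your version is more elementary and sidesteps the closedness of $\ran\partial_{S-\lambda}^0$ off the diagonal, which the paper's route implicitly needs. Second, for the infinite-index diagonal case of (i) the paper argues by contradiction: if $\dim\ker F_\lambda<\infty$ then Lemma \ref{clsfdrd} makes $F_\lambda$ semi-Fredholm, so Theorem \ref{hlsmnmcdv2} forces $\ind_{(0,0)}\HM<\infty$. You instead compute $\dim\ker(S_\HN-\lambda_0)$ directly as $\dim\bigl((\HN/\varphi_0)\ominus\HN\bigr)$ or $\dim\bigl((\HN_1/\varphi_0)\ominus\HN\bigr)$, i.e.\ $\ind\HN$ or $\ind\HN-1$, using exactly the decompositions that appear inside the paper's proof of Lemma \ref{dimfmsfm} together with Lemma \ref{indfnnlz}. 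This costs a little more bookkeeping (one should note that $\varphi_0(\HN/\varphi_0)=\HN$ is closed because $M_{\varphi_0}$ is bounded below on $L^2_a$, and that $\HN\subseteq\HN_1/\varphi_0$), but it buys the sharper byproduct that $\ind_{(\lambda_0,\lambda_0)}\HM$ equals $\ind\HN+1$ or $\ind\HN$ according to whether $\lambda\in Z(\HM)$, which resolves the ambiguity left by Lemma \ref{rbhanthbs} and is not stated in the paper.
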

\begin{proof}
Note that $(\ran \partial_{S-\lambda}^{1})^\perp = \C \frac{1}{(1-\overline{\lambda_1}z_1)(1-\overline{\lambda_2}z_2)} \cap \HM^\perp$, thus (iii) is true. Recall from \cite{GRS05} that for $\lambda \in \D^2$ with $\lambda_1 \neq \lambda_2$, $R - \lambda$ and $S - \lambda$ are Fredholm with index $1$ and $0$, respectively. Recall also that $\ind (R- \lambda) = \ind_\lambda \HM - \dim (\ker \partial_{R - \lambda}^1 \ominus \ran \partial_{R - \lambda}^0)$. Therefore Lemma \ref{isosps} implies (i) is true for $\lambda \in \D^2$ with $\lambda_1 \neq \lambda_2$. Since $\ind (S - \lambda) = 0$ for $\lambda \in \D^2$ with $\lambda_1 \neq \lambda_2$, we conclude that (ii) also holds for $\lambda \in \D^2$ with $\lambda_1 \neq \lambda_2$. Now we consider $\lambda\in \D^2$ with $\lambda_1 = \lambda_2$. Suppose $\lambda = (\lambda_0,\lambda_0) \in \D^2$, we have two cases.

If $\ind_\lambda \HM < \infty$, then $\ind_{(0,0)}\HM < \infty$. Thus Proposition \ref{fhnotrs} asserts that $R - \lambda$ and $S - \lambda$ are Fredholm with index $1$ and $0$. So the same argument as above implies (i) and (ii) hold in this case.

If $\ind_\lambda \HM = \infty$, then $\ind \HN = \infty$. Hence by Lemma \ref{dimfmsfm}, we get $\dim (\ker \partial_{S-\lambda}^{1} \ominus\ran \partial_{S-\lambda}^{0}) = \infty$. So (ii) is true. Next we show that $\dim\ker F_\lambda = \infty$. Suppose $\dim\ker F_\lambda < \infty$. Lemma \ref{clsfdrd} assures that $\ran \partial_{R-\lambda}^1$ is closed. Thus $\ran F_\lambda$ is closed and $F_\lambda$ is semi-Fredholm. Then Theorem \ref{hlsmnmcdv2} shows $\ind_{(0,0)}\HM < \infty$, and so $\ind_\lambda \HM < \infty$. This is a contradiction. So (i) holds and the proof is complete.
\end{proof}

The following corollary is an immediate consequence of the above proposition.

\begin{corollary}\label{eqftqiny}
Let $\HM \in Lat(H^2(\D^2))$ contain $z_1 - z_2$. Then the following are equivalent.\\
(i) $\ind_{(0,0)}\HM = \infty$.\\
(ii) $\dim \partial_{S-\lambda}^0 = \infty, \forall \lambda = (\lambda_0,\lambda_0)\in \D^2$.\\
(iii) $\dim (\ker \partial_{S-\lambda}^{1} \ominus\ran \partial_{S-\lambda}^{0}) = \infty, \forall \lambda = (\lambda_0,\lambda_0)\in \D^2$.
\end{corollary}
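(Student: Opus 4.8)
The plan is to deduce the corollary directly from Proposition \ref{dmfass}, so the argument will be short. The first step is to recast condition (i). By Lemma \ref{rbhanthbs}, for $\HN = V\HM$ one has $\ind \HN \le \ind_{(\lambda_0,\lambda_0)}\HM \le \ind \HN + 1$ for every $\lambda_0 \in \D$, and $\ind \HN$ carries no dependence on a basepoint. Hence $\ind_{(0,0)}\HM = \infty$ if and only if $\ind \HN = \infty$, which in turn holds if and only if $\ind_{(\lambda_0,\lambda_0)}\HM = \infty$ for every $\lambda_0 \in \D$. Thus (i) is equivalent to the assertion that $\ind_\lambda \HM = \infty$ for all diagonal $\lambda = (\lambda_0,\lambda_0) \in \D^2$.

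Next I would invoke the two relevant dimension formulas. Part (i) of Proposition \ref{dmfass} gives $\dim \ker \partial_{S-\lambda}^{0} = \ind_\lambda \HM - 1$ for every $\lambda = (\lambda_0,\lambda_0)$, so (ii) holds precisely when $\ind_\lambda\HM = \infty$ for all such $\lambda$, i.e. precisely when (i) holds. Part (ii) of the same proposition gives that $\dim(\ker \partial_{S-\lambda}^{1} \ominus \ran \partial_{S-\lambda}^{0})$ equals $\ind_\lambda\HM$ when $\lambda \in Z(\HM)$ and $\ind_\lambda\HM - 1$ when $\lambda \notin Z(\HM)$; either way this dimension is infinite exactly when $\ind_\lambda\HM = \infty$, which gives the equivalence of (iii) with (i). Assembling these yields the chain of equivalences.

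There is no real obstacle in this argument, since all the work has already been carried out in Proposition \ref{dmfass} (and, behind it, in Lemmas \ref{clsfdrd}, \ref{clotosf}, \ref{dimfmsfm} and Theorem \ref{hlsmnmcdv2}). The only point warranting a sentence of care is the transition from the single basepoint $(0,0)$ in (i) to the ``for all $\lambda=(\lambda_0,\lambda_0)$'' quantifiers in (ii) and (iii); this is exactly what the uniform estimate of Lemma \ref{rbhanthbs} provides, since $\ind \HN$ is independent of the choice of basepoint. (In (ii), $\dim \partial_{S-\lambda}^{0}$ is to be read as $\dim \ker \partial_{S-\lambda}^{0}$, matching the notation of Proposition \ref{dmfass}.)
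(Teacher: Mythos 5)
Your proposal is correct and matches the paper's intent exactly: the paper simply declares the corollary ``an immediate consequence'' of Proposition \ref{dmfass}, and your argument spells out precisely that deduction, with the only substantive point --- propagating $\ind_{(0,0)}\HM=\infty$ to all diagonal basepoints via Lemma \ref{rbhanthbs} --- handled correctly. Your reading of $\dim\partial_{S-\lambda}^{0}$ as $\dim\ker\partial_{S-\lambda}^{0}$ is also the right interpretation of the paper's notation.
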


Before ending the paper, let us take another look at Theorem 1.1. Let
$\varphi(z_2)=\prod_{j=1}^n\frac{z_2-\lambda_j}{1-\overline{\lambda_j}z_2}$ be a finite Blaschke product. Since $|\lambda_j|<1$ for each $j$, the product  $q(z_2)=\prod_{j=1}^n(1-\overline{\lambda_j}z_2)$ is a polynomial such that $|q(z_2)|\geq \prod_{j=1}^n(1-|\lambda_j|)>0$ on ${\mathbb D}$. Hence a submodule $\HM$ contains $z_1-\varphi(z_2)$ if and only if it contains the polynomial $z_1q(z_2)-\prod_{j=1}^n(z_2-\lambda_j)$. The next conjecture is thus a natural weakening of that in \cite{Ya99}.\\

{\bf Conjecture}. Let $\HM$ be a submodule that contains a nontrivial polynomial. Then $\HM$ is Hilbert-Schmidt if and only if it is finitely generated.

%\noindent \textbf{Acknowledgements.}


\begin{thebibliography}{99}
\bibitem{ARS96}
A. Aleman, S. Richter, C. Sundberg, Beurling's theorem for the Bergman space. Acta Math. 177, no. 2, 275-310 (1996).

\bibitem{ABFP85}
C. Apostol, H. Bercovici, C. Foias, C. Pearcy, Invariant subspaces, dilation theory, and the structure of the predual of a dual algebra. I. J. Funct. Anal. 63, no. 3, 369-404(1985).

\bibitem{Cu81}
R. Curto, Fredholm and invertible $n$-tuples of operators. The deformation problem. Trans. Amer. Math. Soc. 266 (1981), no. 1, 129-159.

\bibitem{DP89}
R. G. Douglas, V. I. Paulsen, Hilbert modules over function algebras. Pitman Research Notes in Mathematics Series, 217. Longman Scientific \& Technical, Harlow; copublished in the United States with John Wiley \& Sons, Inc., New York, 1989.

\bibitem{GRS05}
J. Gleason, S. Richter, C. Sundberg, On the index of invariant subspaces in spaces of analytic functions of several complex variables. J. Reine Angew. Math. 587, 49-76 (2005).

\bibitem{GSZZ09}
K. Guo, S. Sun, D. Zheng, C. Zhong, Multiplication operators on the Bergman space via the Hardy space of the bidisk. J. Reine Angew. Math. 628, 129-168 (2009).

\bibitem{GY04}
K. Guo, R. Yang, The core function of submodules over the bidisk. Indiana Univ. Math. J. 53 (2004), no. 1, 205-222.

\bibitem{III17}
K. J. Izuchi, K. H. Izuchi and Y. Izuchi, Splitting invariant subspaces in the Hardy space over the bidisk. J. Aust. Math. Soc. 102 (2017), no. 2, 205-223.

\bibitem{III}
K. J. Izuchi, K. H. Izuchi and Y. Izuchi, Fredholm indices of some fringe operators over the bidisk, Acta Sci. Math. (Szged), to appear.

\bibitem{IY08}
K. J. Izuchi, R. Yang, $N_\varphi$-type quotient modules on the torus. New York J. Math. 14 (2008), 431-457.

\bibitem{Ja83}
J. Janas, A note on invariant subspaces under multiplication by z in Bergman space. Proc. Roy. Irish Acad. Sect. A 83 (1983), no. 2, 157-164.

\bibitem{LR}
S. Luo, S. Richter, A local index one result for $H^2(\D^2)$, preprint.

\bibitem{Ri87}
S. Richter, On invariant subspaces of multiplication operators on Banach spaces of analytic functions, PhD thesis, University of Michigan, 1986.

\bibitem{Ru69}
W. Rudin, Function theory in polydisks. W. A. Benjamin, Inc., New York-Amsterdam 1969.

\bibitem{Sh01}
S. Shimorin, Wold-type decompositions and wandering subspaces for operators close to isometries. J. reine angew. Math. 531 (2001), 147-189.

\bibitem{Ya99}
R. Yang, The Berger-Shaw theorem in the Hardy module over the bidisk. J. Operator Theory 42 (1999), no. 2, 379-404.

\bibitem{Ya01}
R. Yang, Operator theory in the Hardy space over the bidisk. III. J. Funct. Anal. 186 (2001), no. 2, 521-545.

\bibitem{Ya04}
R. Yang, Beurling's phenomenon in two variables. Integral Equations Operator Theory 48 (2004), no. 3, 411-423.

\bibitem{Ya05}
R. Yang, The core operator and congruent submodules. J. Funct. Anal. 228 (2005), no. 2, 469-489.

\bibitem{Ya06}
R. Yang, On two variable Jordan blocks (II). Integral Equations Operator Theory 56 (2006), 431-449.

\bibitem{LYY11}
Y. Lu, R. Yang and Y. Yang, An index formula for the two variable Jordan block. Proc. Amer. Math. Soc. 139 (2011), no. 2, 511-520.

\bibitem{Zhu98}
K. Zhu, Maximal inner spaces and Hankel operators on the Bergman space. Integral Equations Operator Theory 31 (1998), no. 3, 371-387.

\end{thebibliography}
\end{document}